\theoremstyle{plain}
\theoremstyle{plain}
\newtheorem{theorem}{Theorem}[section]
\newtheorem{corollary}[theorem]{Corollary}
\newtheorem{lemma}[theorem]{Lemma}
\newtheorem{proposition}[theorem]{Proposition}
\theoremstyle{definition}
\newtheorem{definition}[theorem]{Definition}
\newtheorem{remark}[theorem]{Remark}
\newtheorem{remarks}[theorem]{Remarks}
\newtheorem{example}[theorem]{Example}
\newcounter{caseinproof}
\def\span{\mathop{\span}}
\def\deg{\mathop{\rm deg}}
\def\exp{\mathop{\rm exp}}
\def\span{\mathop{\rm span}}
\def\ll1{l_{\lambda}^{-1}(1)}
\def\lm1{l_{\mu}^{-1}(1)}
\newcommand\qmatrix[2][1]{\left(\renewcommand\arraystretch{#1}
\begin{equation}gin{array}{*{20}r}#2\end{array}\right)}
\begin{document}

\title{Tits' type alternative for groups acting on toric affine varieties}

\author{Ivan Arzhantsev and Mikhail Zaidenberg}

\address{National Research University Higher School of Economics, Faculty of Computer Science,
Pokrovsky Boulevard 11, Moscow, 109028 Russia}
\email{arjantsev@hse.ru}
\address{Universit\'e Grenoble Alpes, CNRS, Institut Fourier, F-38000 Grenoble, France}
\email{Mikhail.Zaidenberg@univ-grenoble-alpes.fr}

\begin{abstract} Given a toric affine algebraic variety $X$ and a collection of one-parameter unipotent subgroups $U_1,\ldots,U_s$ of $\mathop{\rm Aut}(X)$ which are normalized by the torus acting on $X$, we show that the group $G$ generated by $U_1,\ldots,U_s$ verifies the following alternative of Tits' type:  either $G$ is a unipotent algebraic group, or it contains a non-abelian free subgroup. 
We deduce that if $G$ is $2$-transitive on a $G$-orbit in $X$, then $G$ contains a non-abelian free subgroup, and so, is of exponential growth. 
\end{abstract} 

\maketitle

\section{Introduction} \label{sec:intro}
We fix an algebraically closed field ${\bf k}$ of  characteristic zero.
Let ${\mathbb A}^n$ stand for the affine space of dimension $n$ over ${\bf k}$ 
and $\mathbb{G}_a$ ($\mathbb{G}_m$) for the additive (multiplicative, respectively) group of 
${\bf k}$ viewed as an algebraic group. 
Consider an algebraic variety $X$ over ${\bf k}$ and  an effective regular action $\mathbb{G}_a\times X\to X$. The image of $\mathbb{G}_a$ in $\mathop{\rm Aut} (X)$ is called a \emph{one-parameter unipotent subgroup} of $\mathop{\rm Aut} (X)$, or a \emph{$\mathbb{G}_a$-subgroup}, 
for short.  Any $\mathbb{G}_a$-subgroup $U$ of $\mathop{\rm Aut}(X)$ has the form $U=\{\exp(t\partial)\,|\,t\in{\bf k}\}$, where $\partial$ 
is a locally nilpotent derivation of the structure ring $\mathcal{O}(X)$. This correspondence between the $\mathbb{G}_a$-subgroups and locally nilpotent derivations does not hold in prime characteristic, and so, we prefer in this paper to work in characteristic zero.

The main result of the paper is the following

\begin{theorem}\label{thm:0.4} Consider a toric affine variety $X$ with no torus factor. Let  a subgroup $G$  of 
$\mathop{\rm Aut} (X)$ be generated by a finite collection $U_1,\ldots,U_s$ of one-parameter unipotent subgroups normalized by the acting torus.   
Then  either
\begin{itemize}\item[(i)] $G$ is a unipotent algebraic group, 

\noindent or

\item[(ii)] $G$
contains the free group $\mathbf{F}_2$ of rank two as a subgroup.
\end{itemize} 
\end{theorem}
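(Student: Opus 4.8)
The plan is to convert the group-theoretic dichotomy into combinatorics of Demazure roots. Write $X=\Spec\mathbf{k}[\sigma^\vee\cap M]$ for a full-dimensional cone $\sigma\subset N_{\mathbb Q}$ (full-dimensional because $X$ has no torus factor), so that $\mathcal{O}(X)=\bigoplus_{m\in\sigma^\vee\cap M}\mathbf{k}\chi^m$. Each $U_i$, being a $\mathbb{G}_a$-subgroup normalized by $T$, is the exponential of a homogeneous locally nilpotent derivation, i.e.\ a root subgroup $U_{e_i}$ attached to a Demazure root $e_i\in M$ associated to a ray $\rho_i$ of $\sigma$: explicitly $\partial_{e_i}(\chi^m)=\langle m,p_{\rho_i}\rangle\,\chi^{m+e_i}$, where $p_{\rho_i}$ is the primitive generator of $\rho_i$ and $\langle e_i,p_{\rho_i}\rangle=-1$; the torus acts on $U_{e_i}$ with weight $e_i$. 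The analysis is then driven by two elementary computations. First, the bracket of two homogeneous derivations $\chi^{w}\partial_p$ and $\chi^{w'}\partial_{p'}$ is again homogeneous, of the form $\chi^{w+w'}\partial_{q}$ with $q=\langle w,p'\rangle\,p-\langle w',p\rangle\,p'$. Second, a nonzero homogeneous derivation $\chi^{w}\partial_{q}$ preserving $\mathcal{O}(X)$ is locally nilpotent if and only if $w\notin\sigma^\vee$; otherwise the iterates $(\chi^{w}\partial_q)^k(\chi^m)$ neither leave the semigroup nor acquire a vanishing coefficient, so $\chi^w\partial_q$ fails to be locally finite and carries a nonzero semisimple-type part.

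Let $\mathfrak{g}\subset\Der\mathcal{O}(X)$ be the Lie algebra generated by $\partial_{e_1},\ldots,\partial_{e_s}$; all its weights lie in the submonoid generated by the $e_i$, hence in $\omega:=\mathrm{Cone}(e_1,\ldots,e_s)\subset M_{\mathbb Q}$. The governing question is whether $\mathfrak{g}$ is a finite-dimensional nilpotent algebra of locally nilpotent derivations, and I would aim to show this is equivalent to the combinatorial condition that $\omega$ be pointed and $\omega\cap\sigma^\vee=\{0\}$. Under that condition every nonzero weight $w$ of $\mathfrak{g}$ lies outside $\sigma^\vee$, so by the second computation each homogeneous component consists of locally nilpotent derivations, while pointedness forbids a weight-zero (hence semisimple) component. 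A linear form strictly positive on $\omega\setminus\{0\}$ then grades $\mathfrak{g}$ in strictly positive degrees, so once finite-dimensionality is known $\mathfrak{g}$ is automatically nilpotent, and a finite-dimensional nilpotent algebra of locally nilpotent derivations exponentiates to a unipotent algebraic subgroup of $\Aut(X)$ containing all the $U_i$ — giving alternative (i). Finite-dimensionality, i.e.\ that only finitely many weights $w\in\omega$ support a component of $\mathfrak{g}$, is the first technical point: a homogeneous derivation preserving $\mathcal{O}(X)$ forces $\langle w,p_\rho\rangle\ge-1$ on every ray, and the pointed cone $\omega$ meets this shifted cone in finitely many lattice points.

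In the complementary case either $\omega$ is not pointed, giving a nonzero $w$ with $w,-w\in\omega$, or $\omega\cap\sigma^\vee\neq\{0\}$, giving a nonzero $w\in\omega\cap\sigma^\vee$; by the two computations above, in both situations $\mathfrak{g}$ contains a nonzero homogeneous element that is not locally nilpotent. The cleanest instance, which I would treat first as a model, is a pair of opposite roots $e'=-e$ (forced, e.g., when $\omega$ is a line): the associated rays $\rho,\rho'$ are then distinct, $[\partial_e,\partial_{-e}]=\partial_{h}$ with $h=p_{\rho'}-p_{\rho}\neq0$, and one checks $[\partial_h,\partial_{\pm e}]=\pm2\,\partial_{\pm e}$, so $\{\partial_e,\partial_{-e},\partial_h\}$ is an $\mathfrak{sl}_2$-triple. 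Since $\partial_{\pm e}$ are locally nilpotent, $\mathcal{O}(X)$ is a locally finite $\mathfrak{sl}_2$-module, so the triple integrates to an algebraic homomorphism $\mathrm{SL}_2\to\Aut(X)$ with finite central kernel whose image lies in $G$; as $\mathrm{SL}_2(\mathbf{k})$ contains $\mathbf{F}_2$ (for instance the group generated by the two unipotents with parameter $2$, which maps into $U_e,U_{-e}\subset G$ and meets the finite kernel trivially since $\mathbf{F}_2$ is torsion-free), alternative (ii) follows.

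In the remaining subcase (for example $\partial_e=y^2\partial_x$, $\partial_{e'}=x^2\partial_y$ on $\mathbb{A}^2$, where $\omega$ is pointed yet $e+e'\in\sigma^\vee$) there are no opposite roots and the semisimple direction is hidden in a higher bracket, so instead of an honest $\mathfrak{sl}_2$ I would fall back on a dynamical ping-pong argument powered by the same mechanism. Picking a one-parameter subgroup $\lambda$ of $T$ with $\langle w,\lambda\rangle\neq0$ for the offending weight, one obtains two root subgroups on which $\lambda$ acts with weights of opposite sign, so conjugation by $\lambda(\tau)$ contracts one and dilates the other. Realizing the action on a projective $T$-equivariant completion of $X$, these become proximal transformations, and I intend to exhibit disjoint attracting and repelling neighborhoods on which high powers of two such elements play ping-pong, so that $\mathbf{F}_2\subset G$ by the ping-pong lemma. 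The hard part will be exactly this last step: establishing the north--south dynamics and the disjointness of the ping-pong domains uniformly, and checking that the free generators genuinely lie in $G$ and not merely in $T\ltimes G$, since here — unlike the $\mathfrak{sl}_2$ case — there is no finite-dimensional subspace on which $G$ acts linearly and the semisimple behaviour appears only in a commutator. A secondary obstacle, already noted, is the finiteness claim in the unipotent case together with the verification that ``$\omega$ pointed and $\omega\cap\sigma^\vee=\{0\}$'' is the exact frontier between the two alternatives.
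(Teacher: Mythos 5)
Your skeleton is the right one and parallels the paper's: in the ``good'' case a finite-dimensional nilpotent Lie algebra of locally nilpotent derivations exponentiating to a unipotent group, in the ``bad'' case a free subgroup, with the frontier expressed through Demazure roots. Some of what you sketch is genuinely nice — in particular, bounding the weights of $\mathfrak{g}$ by intersecting the cone $\omega$ with the polyhedron $\{w:\langle \rho_l,w\rangle\ge -1\ \forall l\}$, whose recession cone is $\sigma^\vee$, so that $\omega\cap\sigma^\vee=\{0\}$ forces boundedness: this is slicker than the paper's coordinate-by-coordinate induction in Proposition~\ref{prop:ALS}. But the proposal leaves unproved exactly the step that carries the theorem, namely the production of $\mathbf{F}_2$ when the combinatorial condition fails and no pair of opposite roots is available (your ``remaining subcase'', e.g.\ $y^2\partial/\partial x$ and $x^2\partial/\partial y$). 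Two things go wrong there. First, the inference ``$\omega$ not pointed or $\omega\cap\sigma^\vee\neq\{0\}$ implies $\mathfrak{g}$ contains a nonzero homogeneous non-locally-nilpotent element'' is not justified: a point of $\omega\cap\sigma^\vee$ only records a positive rational combination of the $e_i$, and the corresponding iterated bracket can vanish (in \eqref{eq:Rom} the vector $d\rho_2-c\rho_1$ may be zero), and even a nonzero non-LND element of $\mathfrak{g}$ does not by itself produce a free subgroup of the group $G$ generated by the $U_i$. Second, the proposed remedy — north--south dynamics of proximal elements on a projective $\mathbb{T}$-equivariant completion — is exactly the part you flag as ``the hard part'' and is not carried out; since the semisimple direction only appears in a commutator there is no finite-dimensional representation making the relevant elements proximal in any standard sense, and nothing establishes the disjoint attracting/repelling domains the ping-pong lemma needs.

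The paper closes this gap by a different and entirely elementary route: it first shows that the whole dichotomy is detected \emph{pairwise} (Proposition~\ref{prop:1.3} together with Lemma~\ref{lem:2-cycle}, which says that absence of $2$-cycles already forbids all cycles), then lifts a single pair of root subgroups via the Cox ring to elementary automorphisms $(x_1,x_2)\mapsto(x_1+sx_2^{c}N_1,x_2)$ and $(x_1,x_2)\mapsto(x_1,x_2+tx_1^{d}N_2)$ of $\mathbb{A}^k$, and plays ping-pong on polynomial degrees when $c,d\ge 2$, invokes Jung--van der Kulk when $c\ge2$, $d=1$, and surjects onto $\SL_2(\mathbb{Z})$ when $c=d=1$. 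Your $\mathfrak{sl}_2$ argument recovers only the opposite-roots instance of the last case. Two smaller points also need writing out: the equivalence between your cone condition and the pairwise condition $\min\{\langle\rho_i,e'\rangle,\langle\rho_j,e\rangle\}=0$ (one direction is the observation that $de+ce'$ lies in $\sigma^\vee\setminus\{0\}$ unless $e'=-e$; the other is essentially Proposition~\ref{prop:ALS}), and the assertion that a finite-dimensional nilpotent $\mathfrak{g}$ of LNDs ``exponentiates to a unipotent algebraic subgroup of $\Aut(X)$'', which in the paper requires the unitriangular normal form, the truncated Baker--Campbell--Hausdorff and Zassenhaus formulas, and a degree bound (Lemmas~\ref{lem:4-01}--\ref{lem:4}). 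As it stands, the proposal proves the theorem only in the cases you treat explicitly.
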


One says that a variety $X$ over $\bf k$ has a torus factor if $X\cong Y\times (\bf{k}\setminus\{0\})$ for some variety $Y$. 
A toric affine variety $X$ has a torus factor if and only if there is a nonconstant invertible regular function on $X$. 

From Theorem~\ref{thm:0.4} we deduce the following corollary. 

\begin{corollary} \label{thm:0.3}
Let $G$ be a group acting on a toric affine variety $X$ and generated by a finite collection $U_1,\ldots, U_s$ of one-parameter unipotent subgroups normalized by the acting torus. If $G$ 
is doubly transitive on a $G$-orbit in $X$, then $G$ contains a free subgroup of rank two. 
\end{corollary}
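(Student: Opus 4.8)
The plan is to use Theorem~\ref{thm:0.4} to reduce the corollary to excluding the possibility that $G$ is unipotent: if we can show that a unipotent algebraic group cannot act doubly transitively on an orbit with more than one point, then the dichotomy of Theorem~\ref{thm:0.4} leaves only the alternative $\mathbf{F}_2\subseteq G$. Two points need care. First, the variety $X$ in the corollary is allowed to have a torus factor, so Theorem~\ref{thm:0.4} does not apply to $X$ directly and a reduction is required. Second, one must carry out the group-theoretic exclusion of the unipotent case.

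The first step is to reduce to the torus-factor-free case. Write $X=\Spec\mathbf k[\sigma^\vee\cap M]$ for a cone $\sigma$ in $N$ with dual $M$, split $M=M_0\oplus M_1$ with $M_1=\sigma^\perp\cap M$, and correspondingly $X\cong X_0\times T_0$, where $X_0$ is the toric affine variety without torus factor attached to $\sigma$ in $M_0$ and $T_0=\Spec\mathbf k[M_1]$ is the torus factor. The invertible regular functions on $X$ are, up to scalars, the characters $\chi^u$ with $u\in M_1$. For a torus-normalized $\mathbb{G}_a$-subgroup $U_i=\{\exp(t\,\partial_{e_i})\}$ given by a Demazure root $e_i$ with distinguished ray $\rho$, the standard formula $\partial_{e_i}(\chi^m)=\langle m,\rho\rangle\,\chi^{m+e_i}$ and the fact that $\langle u,\rho\rangle=0$ for $u\in\sigma^\perp$ show that every such unit is a $\partial_{e_i}$-constant, hence $U_i$-invariant. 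Thus all invertible functions are $G$-invariant, the projection $p\colon X\to T_0$ is constant on $G$-orbits, and the orbit $O$ lies in a single fiber $X_0\times\{t_0\}\cong X_0$. On this fiber the restriction of $\partial_{e_i}$ equals $\chi^{(e_i)_1}(t_0)\,\partial_{(e_i)_0}$, a nonzero scalar multiple of the root derivation of $X_0$ attached to the projected root $(e_i)_0$, so $U_i$ induces on $X_0$ the torus-normalized $\mathbb{G}_a$-subgroup $U_{(e_i)_0}$. This produces a surjection $G\twoheadrightarrow G_0:=\langle U_{(e_1)_0},\ldots,U_{(e_s)_0}\rangle\le\Aut(X_0)$ through which the $G$-action on $O$ factors, so $G_0$ is doubly transitive on $O$, and $X_0$ has no torus factor.

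Now I would apply Theorem~\ref{thm:0.4} to $G_0$: either $G_0\supseteq\mathbf{F}_2$, or $G_0$ is a unipotent algebraic group. In the first case, since $\mathbf{F}_2$ is free, the inclusion $\mathbf{F}_2\hookrightarrow G_0$ lifts along the surjection $G\twoheadrightarrow G_0$ to an embedding $\mathbf{F}_2\hookrightarrow G$, finishing the proof. It remains to rule out the unipotent case. Suppose $G_0$ is unipotent, hence connected and nilpotent, and let $K$ be the closed normal kernel of its action on $O$, so that $\bar G:=G_0/K$ is again connected, nilpotent, and acts faithfully and doubly transitively on $O$. Since $\bar G$ is connected and $|O|\ge 2$, the orbit $O=\bar G/\bar G_x$ is positive-dimensional, hence infinite. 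Double transitivity makes the action primitive, so the orbits of the center $Z:=Z(\bar G)$ form a trivial block system; as $Z$ is nontrivial and the action is faithful, $Z$ must act transitively, and being abelian it acts regularly. But for $z\in Z$, $x\in O$, and $h\in\bar G_x$ centrality gives $h\cdot(zx)=(hzh^{-1})\,hx=zx$, so $\bar G_x$ fixes every point of $O=Z\cdot x$; faithfulness forces $\bar G_x=1$, whence $\bar G=Z$ is abelian and regular. An abelian regular group is doubly transitive only on a two-point set, contradicting that $O$ is infinite. Therefore $G_0$ is not unipotent.

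I expect the main obstacle to be the torus-factor reduction: one must verify carefully that intersecting with a fiber of $p$ replaces $X$ by a torus-factor-free toric variety while converting the generators $U_i$ into torus-normalized $\mathbb{G}_a$-subgroups of $\Aut(X_0)$, and that double transitivity descends to the surjective image $G_0$. Once this is in place, the exclusion of the unipotent alternative is the clean, purely group-theoretic fact that a connected nilpotent group cannot act doubly transitively on an infinite set.
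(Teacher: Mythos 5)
Your proof is correct, and its core is the same as the paper's: apply Theorem~\ref{thm:0.4}, then rule out the unipotent alternative by showing that a nontrivial nilpotent (here, connected unipotent) group cannot act faithfully and doubly transitively on a set with more than two points --- this is exactly the paper's Lemma~\ref{lem:double-transitivity} and Proposition~\ref{lem:unipotent-transitivity}, proved there by the same ``the center is a transitive normal subgroup, yet must fix the unique fixed point of a stabilizer'' argument that you use in slightly different packaging. The one place you go beyond the paper is the torus-factor reduction: the paper's proof of the corollary simply cites Theorem~\ref{thm:0.4}, although the corollary, unlike the theorem, does not assume that $X$ has no torus factor. Your observation that every root derivation annihilates the invertible functions, so that the $G$-orbit lies in a fiber $X_0\times\{t_0\}$ on which the $U_i$ restrict to (rescaled) root subgroups of the torus-factor-free toric variety $X_0$, together with the lifting of $\mathbf{F}_2$ along the resulting surjection $G\twoheadrightarrow G_0$, is a correct and, on a literal reading of the statement, genuinely needed patch that the paper leaves implicit.
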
  
 
 \smallskip
 
The expression ``Tits' type alternative''  in the present paper addresses a property of a class of groups which asserts
that any group from this class either is virtually solvable (resp., virtually nilpotent, virtually abelian, etc.), or contains a non-abelian 
free subgroup. This (rather weak) form of the original Tits alternative disregards whether or not the alternative remains true when passing to a subgroup. We wonder whether, under the setup of Theorem~\ref{thm:0.4}, any (finitely generated) subgroup $H$ of $G$ either is virtually solvable, or contains a free subgroup of rank two. 

Let us provide a brief survey on the classical Tits alternative for the automorphism groups arising in algebraic geometry. Recall that, due to the original Tits' theorem~\cite{Tit:1972},  any finitely generated subgroup of a linear algebraic group  either is virtually solvable, or contains a non-abelian 
free subgroup. Over a field of characteristic zero, the Tits alternative holds for any, not necessarily finitely generated, linear group~\cite{Tit:1972}; in the sequel we call  this property the \emph{enhanced Tits alternative}.
 As an example, we can cite the following result  due to  S.~Cantat and Ch.~Urech.

\begin{theorem}[\cite{Can:2011, Ur:2017}]
The group of birational transformations of  a compact complex K\"{a}hler surface verifies the enhanced Tits alternative, that is, any of its non-virtually solvable subgroups contains a free subgroup of rank two.
\end{theorem}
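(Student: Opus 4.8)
The strategy is to make $\operatorname{Bir}(S)$ act by isometries on a suitable hyperbolic space and then to run a ping-pong argument. First I would attach to the surface $S$ its \emph{Picard--Manin space} $\mathcal{Z}(S)$, the inductive limit of the N\'eron--Severi groups $\mathrm{NS}(S')$ taken over all smooth projective surfaces $S'$ dominating $S$ by a birational morphism, completed so as to carry an intersection pairing of signature $(1,\infty)$. The sheet $\{x : \langle x,x\rangle = 1\}$ inside the positive cone is an infinite-dimensional hyperbolic space $\mathbb{H}_\infty$, and the natural $\operatorname{Bir}(S)$-action on $\mathcal{Z}(S)$ preserves the pairing, so $\operatorname{Bir}(S)$ acts on $\mathbb{H}_\infty$ by isometries. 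Next I would import the standard trichotomy for isometries of a hyperbolic space: each element is \emph{elliptic} (fixing a point of $\mathbb{H}_\infty$), \emph{parabolic} (fixing a single boundary point, with no interior fixed point), or \emph{loxodromic} (positive translation length, exactly two boundary fixed points spanning an axis). For birational maps this trichotomy is governed by the growth of degrees, loxodromic being equivalent to dynamical degree $>1$.

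The proof then splits according to whether a subgroup $H$ contains a loxodromic element. If it does, I would examine the limit set of $H$ on $\partial\mathbb{H}_\infty$. When $H$ contains two loxodromic elements $f,g$ whose pairs of endpoints are disjoint, suitable high powers $f^N, g^N$ play ping-pong on neighbourhoods of their fixed boundary points, producing a copy of $\mathbf{F}_2$ via the Tits-style ping-pong lemma. Otherwise all loxodromic elements of $H$ share one pair of endpoints; then $H$ preserves this pair, and mapping each element to its signed translation length, together with the induced action on the virtually abelian stabiliser of the axis, shows that $H$ is virtually solvable. If $H$ contains no loxodromic element, then every element is elliptic or parabolic, and one shows that $H$ fixes a point of $\mathbb{H}_\infty$ or a boundary point; the surface geometry behind these two alternatives --- conjugation into an algebraic automorphism group in the elliptic case, invariance of a pencil yielding a triangular structure in the parabolic case --- forces $H$ to be virtually solvable.

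Finally, to obtain the \emph{enhanced} form for arbitrary, not necessarily finitely generated, subgroups, I would note that both containing $\mathbf{F}_2$ and being virtually solvable are detected on finitely generated subgroups, and combine this with a limiting/compactness argument on the boundary to upgrade the finitely generated statement.

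The hardest part will be the analysis of the non-loxodromic (parabolic and elliptic) subgroups, and more generally every place where one is tempted to invoke properness: since $\mathbb{H}_\infty$ is infinite-dimensional, its closed balls are not compact, so the classical structure theory of groups acting on proper hyperbolic spaces is unavailable and must be replaced by arguments tailored to the algebraic origin of the action --- in particular the facts that a parabolic birational transformation preserves a fibration and that an elliptic subgroup is, after regularisation, a group of automorphisms subject to Jordan-type constraints.
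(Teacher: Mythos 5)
This theorem is not proved in the paper at all: it is imported verbatim from Cantat and Urech as background for the introduction, so there is no internal proof to compare yours against. Measured against the actual arguments in those references, your outline is faithful to the known proof: the Picard--Manin space with its signature-$(1,\infty)$ intersection form, the isometric action of $\mathrm{Bir}(S)$ on the infinite-dimensional hyperboloid, the elliptic/parabolic/loxodromic trichotomy tied to degree growth, ping-pong on boundary neighbourhoods for two loxodromic elements with disjoint endpoint pairs, and the reduction of the non-loxodromic case to regularizable (automorphism) subgroups and fibration-preserving (de Jonqui\`eres/Halphen) subgroups is exactly Cantat's scheme. You also correctly identify the failure of properness of $\mathbb{H}_\infty$ as the main technical obstruction.

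The one place where your sketch papers over a genuine difficulty is the final ``enhanced'' step. Containing $\mathbf{F}_2$ is indeed detected on $2$-generated subgroups, but being virtually solvable is \emph{not} a property detected on finitely generated subgroups: an increasing union of virtually solvable groups need not be virtually solvable (already an increasing union of finite groups can be infinite simple, hence not virtually solvable even though every finitely generated subgroup is finite). So you cannot simply ``note'' this and conclude; you need uniform control --- a bound on the index and on the derived length of the solvable subgroups produced in the finitely generated case --- or a direct argument for arbitrary subgroups, e.g.\ showing that a subgroup all of whose finitely generated subgroups are elliptic is itself elliptic, which is again exactly where the lack of local compactness bites. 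Supplying this is the content of Urech's extension of Cantat's theorem; as written, your last paragraph is the gap.
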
  

This theorem extends the earlier result of S.~Lamy \cite{Lam:2001} which says that $\mathop{\rm Aut} (\mathbb{A}^2)$ verifies the enhanced Tits alternative. 
The Tits alternative holds also for  the tame automorphism group  of $\mathop{\rm SL}_2(\mathbb{C})$ viewed as an affine  quadric threefold 
 \cite[Thm.~C]{BFL:2014}. 
The enhanced Tits alternative is known to hold for the automorphism group of any compact K\"{a}hler variety \cite[Thm.~1.1]{DHZ:2015} (cf.\ also~\cite{Hu:2019} for the positive characteristic case), and as well for the group of birational transformations of any hyperk\"{a}hler variety \cite{Og:2006}; see also~\cite{KY:2019}.

Our starting point was actually the transitivity issue, see Corollary~\ref{thm:0.3}. 
Let $X$ be a toric affine variety over ${\bf k}$ of dimension at least two with no torus factor, and let $\mathop{\rm SAut} (X)\subset\mathop{\rm Aut} (X)$ 
be the subgroup  generated by all the $\mathbb{G}_a$-subgroups of $\mathop{\rm Aut}  (X)$. It is known \cite[Thm.\ 2.1]{AKZ:2012} 
that ${\rm SAut}(X)$ acts highly transitively~\footnote{Or  infinitely transitively, in the terminology of \cite{AFK$^+$:2013}.} on the smooth locus  ${\rm reg} (X)$, that is, 
$m$-transitively for any $m\ge 1$. 
A variety $X$ satisfying the latter property is called \emph{flexible}; 
see \cite[Thm.~1.1]{AFK$^+$:2013} for a criterion of flexibility. 
Notice that an algebraic subgroup 
$G\subset\mathop{\rm Aut} (X)$ cannot act highly transitively on its orbit, by a dimension count argument. 

A $\mathbb{G}_a$-subgroup acting on a toric variety $X$ is called a \emph{root subgroup} if it is normalized by the acting torus. 
The term \emph{root subgroup} is due to the fact that any such subgroup is associated with a certain lattice vector called a \emph{Demazure root},  see subsection \ref{def:roots}.
Assuming in addition that a toric affine variety $X$ is smooth in codimension two, 
one can find a finite number of  root subgroups $U_1,\ldots,U_s$ of $\mathop{\rm Aut}  (X)$ such that the group $G=\langle U_1,\ldots,U_s\rangle$ 
generated by these subgroups still acts highly transitively on ${\rm reg} (X)$ \cite[Thm.~1.1]{AKZ:2019}. \footnote{It is conjectured \cite[Conj.~1.1]{AKZ:2019} 
that an analogous result holds for any flexible affine variety.} 
If $X={\mathbb A}^n$, $n\ge 2$, then just three  $\mathbb{G}_a$-subgroups (which are not root subgroups, in general) are enough 
\cite[Thm.~1.3]{AKZ:2019}; such subgroups are found explicitly in \cite{And:2019}. For instance, 
for $n=2$ the group $G$ generated by the root subgroups 
\[U_1=\{(x,y)\mapsto (x+t_1y^2,y)\}\quad\mbox{and}\quad U_{2}=\{(x,y)\mapsto (x,y+t_2x)\},\quad t_1,\,t_2\in{\bf k}\] 
acts highly  transitively on $\mathbb{A}^2\setminus \{0\}$ equipped with the standard action of the 2-torus, see \cite[Cor.~21]{LPS:2018}. 
Adding one more root subgroup \[U_3=\{(x,y)\mapsto (x+t_3,y)\},\quad t_3\in{\bf k},\] one gets the group 
$\langle U_1, U_2, U_3\rangle$ acting highly transitively on $\mathbb{A}^2$ (cf.\ \cite{Chis:2018}).

The following question arises: \emph{What can one say about a group acting highly transitively 
on an algebraic variety?} More specifically, let us formulate the following conjecture. 

\smallskip

\noindent{\bf Conjecture 1.}\label{conj:general} \emph{Let $X$ be an affine variety over ${\bf k}$ of dimension $\ge 2$. 
Consider the group \[G=\langle U_1,...U_s\rangle\] generated by $\mathbb{G}_a$-subgroups $U_1,...U_s$  of $\mathop{\rm Aut} (X)$. 
Suppose $G$ is doubly transitive on a $G$-orbit. Then $G$ contains a non-abelian free subgroup.}

\smallskip

Corollary~\ref{thm:0.3} partially  confirms  Conjecture~\ref{conj:general}.
Of course, an analog of this conjecture makes sense in different categories. For instance, one might ask (following a referee's suggestion) whether any highly transitive group of homeomorphisms of a compact manifold contains a non-abelian free subgroup; see, e.g., \cite{Wit:1967} for examples of   highly transitive groups of homeomorphisms.  However, the group-combinatorial analog of the conjecture fails; indeed, the  torsion group of finite permutations of $\mathbb{N}$ is highly transitive. The same holds for the infinite alternating group, that is, the simple group of finite even permutations of $\mathbb{N}$.

 Conjecture 1 is inspired in turn by the following question proposed by J.-P.~Demailly: 

\smallskip

\noindent{\bf Question 1.}\label{ques:1} \emph{What can one say about the growth of the group \[G=\langle U_1,...,U_s\rangle\] generated by a sequence of one-parameter unipotent subgroups,
meaning by ``growth'' the maximal growth of the finitely generated subgroups of $G$?}

\smallskip

For instance, the group $G$ in Conjecture  \ref{conj:general} 
has exponential growth provided the conjecture is true. 
Anyway, this group $G$ cannot have a polynomial growth, see Proposition~\ref{lem:subnormal}. The group $G$ in Theorem~\ref{thm:0.4} is of polynomial growth in case (i), and of exponential  growth in case (ii); the latter holds as well for the group $G$ in Corollary~\ref{thm:0.3}.
In the combinatorial setup, we do not know the answer to the following general question.

\smallskip

\noindent {\bf Question 2.} \label{ques:2} \emph{Let $G$ be a finitely generated group. Assume $G$ acts highly transitively 
on a set $X$. Can $G$ be of intermediate growth?}

\smallskip

\noindent See, e.g., \cite{FMS:2015, FLMMS:2020, GG:2013, HO:2016} for recent studies on highly transitive actions of countable groups, and \cite{HO:2016, FLMMS:2020} for surveys. However, the groups of algebro-geometric nature that we study in this paper are quite different. 

\smallskip

The proof of our main Theorem~\ref{thm:0.4} exploits a constructive criterion/algorithm to decide whether the group $G$ in this theorem is a  unipotent algebraic group. We introduce a combinatorial data associated to  the given collection of the one-parameter unipotent subgroups $U_1,\ldots,U_s$ acting on our toric variety $X$. This data is expressed in terms of Demazure roots  $(\rho, e)$. To a Demazure root there corresponds a root derivation $\partial_{\rho,e}$ acting on the structure ring $\mathcal{O}(X)$. It can be viewed as a vector field,  and it generates a root subgroup $U_{\rho,e}$. If $G$ does not contain any non-abelian free group, then there are strong constraints on the Lie brackets between the root derivations generating the root subgroups of $G$; namely, the bracket of any two such derivations is proportional to one of them. These constraints are encoded in a directed graph $\Gamma$ whose vertices are certain abelian Lie algebras which are indexed via the facets of the associated polyhedral cone of $X$ and generated by the corresponding root derivations; see Definition~\ref{def:graph}. Any edge of $\Gamma$ is oriented in the direction of the bracket of its end vertices provided the corresponding subalgebras do not commute; otherwise, the edge is absent. 
The geometry of $\Gamma$ determines the structure of $G$. It occurs that $\Gamma$ has no oriented cycle if and only if it has no bidirected edge, if and only if $G$ is a unipotent algebraic group, see Proposition~\ref{prop:2-cycle}.  
 Theorem~\ref{thm:0.4} is a byproduct of this criterion. 

The content of this paper is as follows. Besides the Introduction, the paper includes four sections. Section~\ref{sec:Preliminaries} contains the notation and preliminary facts from toric geometry. In Section~\ref{sec:2-sbgrps} we prove Theorem~\ref{thm:0.4} in the particular case of a group $G$ generated by  just  two root subgroups, see Proposition~\ref{prop:1.3}.  The main results of subsections~\ref{ss:anla} and~\ref{ss:laug} are Propositions~\ref{prop:ALS}  and~\ref{prop:2-cycle}, respectively. The former contains a combinatorial criterion for a Lie algebra of derivations to be nilpotent and finite dimensional. The latter provides, in our framework, a link between nilpotent Lie algebras and unipotent algebraic groups. Together, these give Proposition~\ref{thm:2} which says that if any two root subgroups from the group $G=\langle U_1,\ldots,U_s\rangle$ generate a unipotent algebraic group, then $G$ itself is a unipotent algebraic group. Theorem~\ref{thm:0.4} follows immediately from Propositions~\ref{prop:1.3} and \ref{thm:2}. Corollary~\ref{thm:0.3} follows from this theorem due to Proposition~\ref{lem:unipotent-transitivity} in subsection~\ref{ss:2-transitive}. According to this proposition, a unipotent linear algebraic group cannot act $2$-transitively on an algebraic variety. Finally, in subsection~\ref{ss:high-transitive} we establish that a highly transitive group cannot be virtually solvable; see Corollary~\ref{cor:virt-solv}. 
In particular, such a group is of exponential growth provided it satisfies a Tits' type alternative; cf.~Question~2.

\section{Preliminaries from toric geometry}\label{sec:Preliminaries}
We start by recalling the standard notation and definitions of toric geometry. 

\subsection{Toric affine varieties} 
\label{sit:1.1} 
  Consider 
an algebraic torus $\mathbb{T}= (\mathbb{G}_m)^n$. 
Let $N$ be the lattice of one-parameter subgroups of $\mathbb{T}$, 
$N^\vee = {\rm Hom}\,(\mathbb{T},\mathbb{G}_m)$ the dual lattice of characters, and $\langle \cdot , \cdot \rangle\colon N \times N^\vee \to {\mathbb Z}$ the natural pairing; see, e.g.,~\cite{CLS:2011} or [6, Def.~4.2]. 
Let $\chi^m$ stand for the character of $\mathbb{T}$ which corresponds to  $m \in N^\vee$, so that $\chi^m\chi^{m'} = \chi^{m+m'}$. 
The group algebra
${\bf k}[N^\vee]= \oplus_{m\in N^\vee} {\bf k}\chi^m$ can be identified with the structure algebra $\mathcal{O}(\mathbb{T})$. 

Consider further the pair of dual ${\mathbb Q}$-vector spaces $N_{\mathbb Q}=N\otimes {\mathbb Q}$ and $N^\vee_{\mathbb Q}=N^\vee\otimes {\mathbb Q}$, 
a closed polyhedral cone $\Delta_{\mathbb Q}\subset N_{\mathbb Q}$ 
and its dual cone $\Delta^\vee_{{\mathbb Q}}\subset N^\vee_{\mathbb Q}$. 
By abuse of language, by the associated pair of \emph{lattice cones}  we mean the pair  $(\Delta,\Delta^\vee)$, where
$\Delta=N\cap \Delta_{\mathbb Q}$ and $\Delta^\vee=N^\vee\cap \Delta^\vee_{{\mathbb Q}}$, respectively. 
With any (closed) polyhedral lattice cone $\Delta\subset N$  one associates the normal toric affine variety 
\[X=\mathop{\rm Spec}\, \left(\bigoplus_{m\in \Delta^\vee} {\bf k}\chi^m\right),\] and any normal toric affine variety arises in this way.
The $\mathbb{T}$-action on $X$ is induced by the $\Delta^\vee$-grading on the structure algebra $\mathcal{O}(X)$.
 By Gordon's Lemma \cite[Prop.~1.2.17]{CLS:2011}, the cones $\Delta$ and $\Delta^\vee$ are both finitely generated monoids. 
The lattice vectors $(\rho_j)_{j=1,\ldots,k}$ on the extremal rays of $\Delta_{{\mathbb Q}}$, 
which are elements of the minimal system of generators 
 of $\Delta$, are called \emph{ray generators}. These are in one-to-one correspondence with the facets of the dual polyhedral cone $\Delta^\vee_{\mathbb Q}$.
The variety $X$ has no torus factor if and only if $\Delta_{\mathbb Q}$ 
is full dimensional, if and only if
$\Delta^\vee_{\mathbb Q}\subset N^\vee_{\mathbb Q}$ is pointed, that is, contains no affine line. 
See also~\cite{CLS:2011, Ful:1993, Oda:1988} for a detailed exposition.

\subsection{Root derivations and root subgroups}\label{def:roots} 
 The \emph{Demazure facets} 
\[S_j=\{e\in N^\vee\,|\,\langle\rho_j,e\rangle=-1, \quad\langle\rho_i,e\rangle\ge 0,\quad i=1,\ldots,k,\,\,\, i\neq j\},\quad j=1,\ldots,k\]  
are  lattice polyhedra in $N^\vee$.
Each of them is contained in a hyperplane of $N^\vee_{\mathbb Q}$ parallel to a facet of $\Delta^\vee_{{\mathbb Q}}$ and  situated outside the cone $\Delta^\vee_{{\mathbb Q}}$.
The lattice vectors $e\in\bigcup_{j=1}^k S_j$ are called \emph{Demazure  roots}. Any Demazure facet contains an infinite set of Demazure roots. 
To a Demazure  root  $e\in S_j$ one associates the \emph{root derivation} $\partial_{\rho_j, e}\in {\rm Der}\,(\mathcal{O}(X))$,
 which acts on the character $\chi^m$ via
\[\partial_{\rho_j, e} (\chi^m)=\langle \rho_j,m\rangle \chi^{m+e}.\] 
The kernel of $\partial_{\rho_j, e}$ is spanned by the characters $\chi^{m}$, where $m$ runs over the facet  of $\Delta^\vee$ defined by $\langle \rho_j,m\rangle=0$.

The root derivations are precisely the homogeneous locally nilpotent derivations of the graded algebra  $\mathcal{O}(X)= \bigoplus_{m\in \Delta^\vee} {\bf k}\chi^m$. Recall \cite[Thm.~2.4]{Lien:2010} that any homogeneous derivation of $\mathcal{O}(X)$ is proportional to one of the form $\partial_{\rho,e}$ for some $\rho\in N$ and  $e\in N^\vee$ acting via \[\partial_{\rho,e}(\chi^m)=\langle\rho, m\rangle \chi^{m+e}, \] where $e$ is called the \emph{degree} of $\partial_{\rho,e}$. 
One has \cite[Sect.~3]{Rom:2014} 
\begin{equation}\label{eq:Rom}
 [\partial_1,\partial_2]=\partial_{\rho, e_1+e_2}\quad\text{with}\quad \rho=d\rho_2-c\rho_1.
\end{equation} 
The  root subgroups $\exp(t\partial_{\rho_j, e})$ are precisely the $\mathbb{G}_a$-subgroups 
of $\mathop{\rm Aut} (X)$ normalized by the torus $\mathbb{T}$.  See, e.g.,~\cite{AKZ:2019, ALS:2021, Fre:2017, 
Lien:2010} for further details. 

\subsection{Cox rings and total coordinates}\label{sit:1.3} 
Let $X$ be a normal toric affine variety $X$ with no torus factor. The divisor class group ${\rm Cl}(X)$ is the abelian group  generated by the classes of the prime $\mathbb{T}$-invariant divisors $D_1,\ldots,D_k$ on $X$. These divisors are in one-to-one correspondence with the ray generators $(\rho_j)_{j=1,\ldots,k}$.
The \emph{Cox ring} of $X$ is the polynomial ring $\mathcal{O}(\mathbb{A}^k)={\bf k}[x_1,\ldots,x_k]$ on a distinguished set of variables called the \emph{total coordinates}. 
It is equipped with a ${\rm Cl}(X)$-grading defined by $\deg(x_i)=[D_i]$, $i=1,\ldots,k$. This grading corresponds to a diagonal action  on $\mathbb{A}^k=\mathop{\rm Spec}({\bf k}[x_1,\ldots,x_k])$ of the \emph{Cox quasitorus} 
$F_{\rm Cox}={\rm Hom}\,({\rm Cl}(X),\mathbb{G}_m)$. Recall that a \emph{quasitorus} is a direct product of an algebraic torus and a finite abelian group. One has \cite[Thm.~2.1.3.2]{ADHL:2015}
\[X\cong\mathop{\rm Spec}(\mathcal{O}(\mathbb{A}^k)^{F_{\rm Cox}})=\mathbb{A}^k/\!/F_{\rm Cox}.\]
See also ~\cite{ADHL:2015, AKZ:2019},~\cite[Ch.~5]{CLS:2011}.

\begin{lemma}\label{lem:lifting}
Let $e\in S_j$ be a Demazure root, and let $\hat e=(c_1,\ldots,c_k)\in {\mathbb Z}^k$, where $c_i=\langle\rho_i,e\rangle$. Then the following hold.
\begin{itemize}\item[{\rm (a)}] The integer lattice vector $\hat e$ is a Demazure root of $\mathbb{A}^k$ (viewed as a toric variety with the standard action of the $k$-torus) which belongs to the $j$th Demazure facet $\hat S_j$ of the first octant ${\mathbb Z}^k_{\ge 0}\subset {\mathbb Z}^k$.
\item[{\rm (b)}] Let $(\varepsilon_i)_{i=1,\ldots,k}$ be the ray generators of the lattice cone ${\mathbb Z}^k_{\ge 0}$. Then 
one has  
\begin{equation}\label{eq:elementary1} \hat \partial:=\partial_{\varepsilon_j,\hat e} = M_j\frac{\partial}{\partial x_j},\quad\text{where}
\quad M_j=x_1^{c_{1}}\cdots x_{j-1}^{c_{j-1}}x_{j+1}^{c_{j+1}}\cdots x_k^{c_{k}}\in{\bf k}[x_1,\ldots,x_{j-1},x_{j+1},\ldots,x_k].\end{equation}
The associated $\mathbb{G}_a$-subgroup consists of elementary transformations
\begin{equation}\label{eq:elementary2} 
\exp(t\hat \partial):(x_1,\ldots,x_k)\mapsto 
(x_1,\ldots, x_{j-1}, x_j+tM_j,x_{j+1},\ldots, x_k), \,\,\, t\in {\bf k}.
\end{equation}
This is a  subgroup of the tame automorphism group ${\rm Tame}\,(\mathbb{A}^k)$.
\item[{\rm (c)}] The Cox quasitorus $F_{\rm Cox}$ and the $\mathbb{G}_a$-subgroup $\exp(t\hat \partial)$ commute in $\mathop{\rm Aut} (\mathcal{O}(\mathbb{A}^k))$, and
\begin{equation}\label{eq:elementary4}
\exp(t\hat \partial)|_{\mathcal{O}(\mathbb{A}^k)^{F_{\rm Cox}}}=\exp(t\partial_{\rho_j,e}).
\end{equation}
\item[{\rm (d)}] Given a sequence $(\partial_1,\ldots,\partial_s)$ of root derivations  of $\mathcal{O}(X)$, where $\partial_i=\partial_{\rho_{j(i)},e_i}$ with a Demazure root $e_i\in S_{j(i)}$ of $X$, and the sequence  of the corresponding root derivations $\hat\partial_i=\partial_{\varepsilon_{j(i)},\hat e_i}$ of the Cox ring $\mathcal{O}(\mathbb{A}^k)={\bf k}[x_1,\ldots,x_k]$ with $\hat e_i\in \hat S_{j(i)}$, $i=1,\ldots,s$,
consider the Lie algebras $L$ and $\hat L$ generated, respectively, by $\partial_1,\ldots,\partial_s$ and $\hat\partial_1,\ldots,\hat\partial_s$. Then the correspondence $\partial_i\mapsto \hat\partial_i$, $i=1,\ldots,s$, induces an isomorphism of 
Lie algebras $L\cong\hat L$. 
\end{itemize}
\end{lemma}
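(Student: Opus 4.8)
The plan is to deduce (a) and (b) directly from the definitions. For (a), the dual of the first octant $\mathbb{Z}^k_{\geq 0}$ is again $\mathbb{Z}^k_{\geq 0}$, its ray generators are the standard basis vectors $\varepsilon_i$, and $\langle\varepsilon_i,\cdot\rangle$ is the $i$-th coordinate; hence $\hat S_j=\{w\in\mathbb{Z}^k\mid w_j=-1,\ w_i\ge 0\ (i\ne j)\}$, and $\hat e=(c_1,\dots,c_k)\in\hat S_j$ is exactly the condition $e\in S_j$ read coordinatewise, namely $c_j=\langle\rho_j,e\rangle=-1$ and $c_i=\langle\rho_i,e\rangle\ge 0$ for $i\ne j$. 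For (b) I would evaluate $\partial_{\varepsilon_j,\hat e}$ on a monomial via the defining rule, $\partial_{\varepsilon_j,\hat e}(x^m)=\langle\varepsilon_j,m\rangle\,x^{m+\hat e}=m_j\,x^{m}x^{\hat e}$; since $c_j=-1$, part (a) gives $x^{\hat e}=M_jx_j^{-1}$ with $M_j$ a genuine polynomial as in \eqref{eq:elementary1}, so $\partial_{\varepsilon_j,\hat e}(x^m)=m_jM_j x^{m-\varepsilon_j}=(M_j\,\partial/\partial x_j)(x^m)$, and the two derivations coincide since they agree on generators. As $M_j$ does not involve $x_j$, $\hat\partial^2(x_j)=0$, whence $\exp(t\hat\partial)$ is the elementary (thus tame) transformation \eqref{eq:elementary2}.

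For (c) I would use the exact sequence $0\to N^\vee\xrightarrow{\varphi}\mathbb{Z}^k\xrightarrow{q}{\rm Cl}(X)\to 0$, where $\varphi(e)=(\langle\rho_i,e\rangle)_i=\hat e$ and $q(\varepsilon_i)=[D_i]$; its exactness records that $\sum_i\langle\rho_i,e\rangle D_i=\mathrm{div}(\chi^e)$ is principal. The $F_{\rm Cox}$-action is the action of the ${\rm Cl}(X)$-grading, and $\hat\partial$ is homogeneous of $\mathbb{Z}^k$-degree $\hat e=\varphi(e)$, hence of ${\rm Cl}(X)$-degree $q(\hat e)=0$; a derivation homogeneous of degree $0$ commutes with the grading quasitorus, so $\exp(t\hat\partial)$ commutes with $F_{\rm Cox}$ and preserves $\mathcal{O}(\mathbb{A}^k)^{F_{\rm Cox}}=\mathcal{O}(X)$. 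Restricting, the invariant monomial $\chi^{m'}$ ($m'\in\Delta^\vee$) is $x^{\varphi(m')}$, and $\hat\partial(x^{\varphi(m')})=\langle\varepsilon_j,\varphi(m')\rangle\,x^{\varphi(m')+\hat e}=\langle\rho_j,m'\rangle\,x^{\varphi(m'+e)}$, i.e. $\hat\partial|_{\mathcal{O}(X)}=\partial_{\rho_j,e}$; exponentiating gives \eqref{eq:elementary4}.

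For (d), part (c) shows every generator $\hat\partial_i$ preserves $\mathcal{O}(X)$ and restricts to $\partial_i$, so restriction is a Lie-algebra homomorphism ${\rm res}\colon\hat L\to{\rm Der}(\mathcal{O}(X))$ with ${\rm res}(\hat\partial_i)=\partial_i$ and image $L$; thus ${\rm res}\colon\hat L\to L$ is surjective and sends generators to generators. To upgrade this to an isomorphism I would use gradings. Since $X$ has no torus factor the $\rho_i$ span $N_{\mathbb{Q}}$, so $\varphi$ is injective and identifies the $N^\vee$-grading of $\mathcal{O}(X)$ with the restriction of the $\mathbb{Z}^k$-grading of $\mathcal{O}(\mathbb{A}^k)$; the $\hat\partial_i$ are $\mathbb{Z}^k$-homogeneous of degrees $\hat e_i=\varphi(e_i)$ and the $\partial_i$ are $N^\vee$-homogeneous of degrees $e_i$, so ${\rm res}$ respects the gradings and it suffices to prove injectivity on each homogeneous component. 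A $\mathbb{Z}^k$-homogeneous derivation of degree $d$ is of the form $\partial_{\hat\rho,d}$ for a unique $\hat\rho$, and the identity $\langle\varphi^{\vee}w,m'\rangle=\langle w,\varphi(m')\rangle$, with $\varphi^{\vee}\colon\mathbb{Z}^k\to N$ the transpose sending $\varepsilon_i\mapsto\rho_i$, gives ${\rm res}(\partial_{\hat\rho,d})=\partial_{\varphi^{\vee}\hat\rho,\,e}$ whenever $d=\varphi(e)$. As $\Delta$ is strongly convex, $\Delta^\vee$ spans $N^\vee_{\mathbb{Q}}$ and $\partial_{\rho,e}=0$ iff $\rho=0$; hence on the degree-$d$ piece ${\rm res}$ is injective exactly when $\varphi^{\vee}$ is injective on the set of vectors $\hat\rho$ occurring in $\hat L_d$.

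The crux, and the step I expect to be the main obstacle, is to rule out nonzero homogeneous elements of $\hat L$ whose $\rho$-vector lies in $\ker\varphi^{\vee}$, that is, derivations vertical for $\mathbb{A}^k\to X$. I would first note that $\hat L_d$ can contain such a vector only if $d$ has at least two admissible indices $j$ (those with $d+\varepsilon_j\in\mathbb{Z}^k_{\geq0}$): with a single admissible index $\hat\rho\in{\bf k}\varepsilon_j$ and $\varphi^{\vee}\hat\rho\in{\bf k}\rho_j\setminus\{0\}$, while two distinct admissible indices force $d\in\mathbb{Z}^k_{\geq0}$, i.e. $e\in\Delta^\vee$. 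This reduced case I would treat degreewise through the bracket rule \eqref{eq:Rom}, whose structure constants are the pairings $\langle\rho_{j(a)},e_b\rangle=\langle\varepsilon_{j(a)},\hat e_b\rangle$ that coincide upstairs and downstairs, together with the linear independence of the ray generators entering a given bracket. In the only situation needed below, where $\hat L$ is generated by two root derivations (Propositions~\ref{prop:1.3} and \ref{thm:2}), every $\hat\rho$ lies in ${\rm span}(\varepsilon_{j(1)},\varepsilon_{j(2)})$, and the two distinct ray generators $\rho_{j(1)},\rho_{j(2)}$ are linearly independent, being distinct extremal rays of the strongly convex cone $\Delta$; thus $\varphi^{\vee}$ is injective there and ${\rm res}\colon\hat L\to L$ is an isomorphism.
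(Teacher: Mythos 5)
Parts (a)--(c) of your argument are correct. They are the standard computations (the paper simply delegates (b) and (c) to a lemma of [AKZ 2019]), and your derivation of \eqref{eq:elementary4} from the exact sequence $0\to N^\vee\xrightarrow{\varphi}\mathbb{Z}^k\to{\rm Cl}(X)\to 0$ together with the degree-zero property of $\hat\partial$ is a perfectly good substitute for that citation. For (d) you take the opposite direction from the paper: the paper constructs an injection $L\hookrightarrow\hat L$ via the pullback $\pi^*$ along $\pi\colon\mathbb{A}^k\to X$ (injective because $\pi^*\colon\mathcal{O}(X)\to\mathcal{O}(\mathbb{A}^k)$ is, with $\pi^*(\partial_i)=\hat\partial_i$), whereas you construct the surjection ${\rm res}\colon\hat L\to L$ and then must prove injectivity. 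To your credit, you correctly isolate the real issue hidden in either formulation: one must exclude nonzero ``vertical'' elements of $\hat L$, i.e.\ homogeneous derivations whose $\rho$-vector lies in $\ker\varphi^\vee$, and your reduction (such an element must have support of size at least two, hence degree $\varphi(e)$ with $e\in\Delta^\vee$) is sound.

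The gap is that you never close this reduced case in the generality the lemma asserts and the paper requires. You dispose of it only when exactly two ray generators occur, using the linear independence of $\rho_{j(1)},\rho_{j(2)}$, and you justify stopping there by claiming the two-generator case is ``the only situation needed below.'' That is not so: the isomorphism $L\cong\hat L$ of part (d) is invoked for an arbitrary number of generators throughout Section~\ref{sec:TA} --- in the Convention preceding Lemma~\ref{lem:4-01}, where the decomposition $\hat L=\bigoplus_i\hat L_i$ is transported between the two sides, and in Lemma~\ref{lem:4}, where ${\rm Lie}(\hat G)=\hat L$ is converted into ${\rm Lie}(G)=L$ --- and Proposition~\ref{thm:2} itself concerns $s$ generators. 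With three or more ray generators the $\rho_{j(i)}$ are in general linearly dependent (whenever $k>n$ there are relations among the ray generators), so ``the linear independence of the ray generators entering a given bracket'' is simply unavailable, and your sketch does not rule out a vertical homogeneous component $\sum_j a_jx^{\varphi(e)+\varepsilon_j}\partial/\partial x_j$ with $\sum_j a_j\rho_j=0$. The case you actually need can be rescued within your framework: under the no-$2$-cycle hypothesis in force in Section~\ref{sec:TA}, Proposition~\ref{prop:ALS} (applied upstairs, where the pairings $\langle\varepsilon_{j},\hat e\rangle=\langle\rho_j,e\rangle$ coincide with those downstairs) shows that every homogeneous component of every element of $\hat L$ is a single monomial derivation $c\,x^{\hat e+\varepsilon_i}\partial/\partial x_i$ with $e\in S_i$; since $\langle\rho_i,e\rangle=-1$, no such degree lies in $\Delta^\vee$, and distinct pairs $(i,e)$ have distinct degrees, so verticality is excluded. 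As written, however, your proof of part (d) establishes the statement neither in general nor in all the cases where the paper uses it.
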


\begin{proof}
Statement (a) is immediate; statements (b) and (c) follow easily from \cite[Lem.~4.20.b]{AKZ:2019}; see~\cite[(12)]{AKZ:2019} for~\eqref{eq:elementary1}. To show (d), consider the morphism $\pi\colon \mathbb{A}^k\to X=\mathbb{A}^k/\!/F_{\rm Cox}$. The induced  pullback homomorphism  $\pi^*\colon \mathcal{O}(X)\to \mathcal{O}(\mathbb{A}^k)$ is injective, and its image coincides with the algebra of invariants $\mathcal{O}(\mathbb{A}^k)^{F_{\rm Cox}}$. The induced homomorphism of the Lie algebras of vector fields $\pi^*\colon {\rm Vec}(X)\to {\rm Vec}(\mathbb{A}^k)$  is as well injective, and its image coincides with the Lie subalgebra of $F_{\rm Cox}$-invariant vector fields on $\mathbb{A}^k$ yielding an isomorphism ${\rm Vec}(X)\cong {\rm Vec}(\mathbb{A}^k)^{F_{\rm Cox}}$. Considering the derivations as vector fields, we have $\pi^*(\partial_i)=\hat\partial_i$, $i=1,\ldots,s$, and $\pi^*(L)=\hat L$. 
\end{proof}

Recall that a linear algebraic group is called \emph{unipotent} if it consists of unipotent matrices. In characteristic zero, any unipotent algebraic group is isomorphic to an affine space $\mathbb{A}^n$ as a variety. Any orbit of  a unipotent  algebraic group acting regularly on an affine variety is  closed and isomorphic to an affine space. 
In the sequel we need the following technical results.

\begin{proposition}\label{prop:pushforward} 
Given a collection of Demazure roots $\left(e_{j(i),i}\in S_{j(i)}\right)_{i=1,\ldots,s}$, let  
\[G=\langle U_i\,|\,i=1,\ldots,s\rangle\subset\mathop{\rm Aut} (X)\quad\text{where}\quad U_i=\exp(t\partial_{\rho_{j(i)},e_{j(i),i}}).\] 
Consider the root derivations  $\hat\partial_i=\hat\partial_{\varepsilon_{j(i)}, \hat e_{j(i),i}}$ and the root subgroups 
$\hat U_i=\exp(t\hat\partial_i)$ acting on $\mathbb{A}^k$, $i=1,\ldots,s$. Let
\[ \hat G=\langle \hat U_i\,|\,i=1,\ldots,s\rangle\subset\mathop{\rm Aut} (\mathbb{A}^k).\]
Then the following holds.
\begin{itemize}\item[{\rm (a)}]  If $\hat G$ contains a free subgroup ${\bf F}_m$ of rank $m\ge 2$ then  $G$ does.
\item[{\rm (b)}] If $\hat G$ is a unipotent algebraic group then $G$ is, and,  moreover, $G\cong\hat G$. 
\end{itemize}
\end{proposition}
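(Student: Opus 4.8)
The plan is to realize $G$ as the image of $\hat G$ under the natural restriction homomorphism furnished by the Cox construction, and then to control the kernel of this homomorphism; parts (a) and (b) both follow once the kernel is understood.

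First I would build the homomorphism. By Lemma~\ref{lem:lifting}(c) each $\hat U_i$ commutes with the Cox quasitorus $F_{\rm Cox}$, hence so does the whole group $\hat G=\langle \hat U_i\rangle$. Consequently every $\hat g\in\hat G$ preserves the invariant subalgebra $\mathcal{O}(\mathbb{A}^k)^{F_{\rm Cox}}\cong\mathcal{O}(X)$ and descends to an automorphism of $X=\mathbb{A}^k/\!/F_{\rm Cox}$; that is, $\pi\circ\hat g=\Phi(\hat g)\circ\pi$ for a group homomorphism $\Phi\colon\hat G\to\mathop{\rm Aut}(X)$, where $\pi\colon\mathbb{A}^k\to X$ is the quotient map. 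The restriction formula \eqref{eq:elementary4} gives $\Phi(\hat U_i)=U_i$, so $\Phi(\hat G)=\langle U_i\rangle=G$ and $\Phi\colon\hat G\twoheadrightarrow G$ is surjective.

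Next I would show that $\ker\Phi\subseteq F_{\rm Cox}$ and that $\ker\Phi$ is central in $\hat G$. If $\Phi(\hat g)=\mathop{\rm Id}_X$ then $\pi\circ\hat g=\pi$, so $\hat g$ preserves the fibres of $\pi$. On the dense open $F_{\rm Cox}$-stable locus $U=\pi^{-1}(X_0)$ over which $\pi$ restricts to a geometric quotient with free $F_{\rm Cox}$-orbits as fibres (nonempty because the classes $[D_i]$ generate ${\rm Cl}(X)$ and $X$ has no torus factor), each fibre is a single free orbit, so $\hat g(x)=f(x)\cdot x$ for a unique regular map $f\colon U\to F_{\rm Cox}$. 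Since $\hat g$ commutes with $F_{\rm Cox}$, uniqueness forces $f(t\cdot x)=f(x)$, i.e.\ $f$ is $F_{\rm Cox}$-invariant and descends to $X_0$. As $X$ has no torus factor it carries no nonconstant invertible function, and $X$ is connected; hence $f$ is constant and $\hat g\in F_{\rm Cox}$. Because $F_{\rm Cox}$ centralizes $\hat G$ by Lemma~\ref{lem:lifting}(c), this yields $\ker\Phi\subseteq F_{\rm Cox}\cap\hat G\subseteq Z(\hat G)$.

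With this, both statements follow at once. For (a), if $\mathbf{F}_m\subseteq\hat G$ with $m\ge 2$, then $\mathbf{F}_m\cap\ker\Phi$ lies in the centre of the free group $\mathbf{F}_m$, which is trivial; hence $\Phi|_{\mathbf{F}_m}$ is injective and $\Phi(\mathbf{F}_m)\cong\mathbf{F}_m$ is a free subgroup of $G$. For (b), if $\hat G$ is unipotent then every element of $\ker\Phi$ is simultaneously unipotent (as an element of $\hat G$) and semisimple (as an element of the quasitorus $F_{\rm Cox}$), hence equals the identity; thus $\Phi\colon\hat G\to G$ is an isomorphism of abstract groups. Finally, since $\hat G$ acts algebraically on $\mathbb{A}^k$ and commutes with $F_{\rm Cox}$, the induced faithful action on $X$ is algebraic, so $G\cong\hat G$ inherits the structure of a unipotent algebraic group. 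The main obstacle is the kernel computation of the previous paragraph: making rigorous that a fibre-preserving, $F_{\rm Cox}$-equivariant automorphism of $\mathbb{A}^k$ lies in $F_{\rm Cox}$, which rests on the existence of a dense locus of free orbits and on the no-torus-factor hypothesis to rule out nonconstant maps $X\to F_{\rm Cox}$; once $\ker\Phi\subseteq F_{\rm Cox}$ is secured, everything else is formal.
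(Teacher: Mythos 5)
Your overall architecture is exactly the paper's: lift everything to the Cox ring, observe that $\hat G$ centralizes $F_{\rm Cox}$, obtain a surjection $\hat G\twoheadrightarrow G$ whose kernel lies in $\hat G\cap F_{\rm Cox}$, and then kill that intersection separately in cases (a) and (b). Your endgame is correct and is only a cosmetic variant of the paper's: in (a) you use that $\mathbf{F}_m\cap F_{\rm Cox}$ is \emph{central} in $\mathbf{F}_m$ (the paper uses that it is a normal abelian subgroup), and in (b) you use ``unipotent meets semisimple'' inside the algebraic group $\hat G\cdot F_{\rm Cox}$ (the paper uses that a torsion-free algebraic subgroup of a quasitorus is trivial); both are sound.

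The one genuine weak point is the kernel computation. The paper does not prove that the kernel of the restriction map is contained in $F_{\rm Cox}$; it cites the exact sequence \eqref{eq:AG} from \cite[Thm.~5.1]{AG:2010}, which packages exactly this. You re-derive it, and your derivation has a gap at the decisive step: you define $f\colon U\to F_{\rm Cox}$ on the free locus, descend it to $X_0=\pi(U)$, and then conclude $f$ is constant ``because $X$ has no nonconstant invertible function.'' But $f$ is a function on the open subset $X_0$, not on $X$, and $X_0$ always contains the open $\mathbb{T}$-orbit and may omit a divisor of $X$; such open sets carry plenty of nonconstant invertible functions, so the no-torus-factor hypothesis applied to $X$ does not directly force $f$ to be constant. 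The argument can be repaired by staying upstairs: since the ray generators $\rho_j$ are primitive, any $k-1$ of the classes $[D_i]$ still generate ${\rm Cl}(X)$, so the non-free locus of $F_{\rm Cox}$ in $\mathbb{A}^k$ has codimension at least two; the $\mathbb{G}_m$-components of $f$ are then invertible functions on the complement of a codimension-two subset of $\mathbb{A}^k$, hence extend and are constant, and the finite components are constant by connectedness. Either supply that codimension argument or, as the paper does, simply invoke \eqref{eq:AG}; as written, the constancy claim is unsupported.
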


\begin{proof}  (a) 
Since any subgroup $\hat U_i$, $i=1,\ldots,s$ commutes with the quasitorus  $F_{\rm Cox}$ in $\mathop{\rm Aut} (\mathbb{A}^k)$ one has
\[\hat G\subset {\rm Centr}_{\mathop{\rm Aut} (\mathbb{A}^k)}(F_{\rm Cox})\subset {\rm Norm} _{\mathop{\rm Aut} (\mathbb{A}^k)}(F_{\rm Cox}),\]
where ${\rm Centr}_{\mathop{\rm Aut} (\mathbb{A}^k)}(F_{\rm Cox})$ and ${\rm Norm} _{\mathop{\rm Aut} (\mathbb{A}^k)}(F_{\rm Cox})$ are the centralizer and the normalizer of $F_{\rm Cox}$ in $\mathop{\rm Aut} (\mathbb{A}^k)$, respectively.  
There is the exact sequence \cite[Thm.~5.1]{AG:2010}
\begin{equation}\label{eq:AG} 
1\to F_{\rm Cox}\to {\rm Norm} _{\mathop{\rm Aut} (\mathbb{A}^k)}(F_{\rm Cox})\stackrel{\tau}{\longrightarrow}\mathop{\rm Aut} (X)\to 1.
\end{equation}
Assume  $\hat G$ contains a free subgroup $\mathbf{F}_m$ of rank $m\ge 2$. We claim that  the restriction 
\[\tau|_{\mathbf{F}_m} \colon \mathbf{F}_m\to \mathbf{F}_m/(\mathbf{F}_m\cap F_{\rm Cox})\subset \mathop{\rm Aut} (X)\] is an isomorphism, that is, $\mathbf{F}_m\cap F_{\rm Cox}=1$. Indeed, the latter intersection is a normal abelian subgroup of the non-abelian free group $\mathbf{F}_m$, hence the trivial group. 

(b) Suppose $\hat G$ is a unipotent algebraic group. Then, once again,  the restriction 
\[\tau|_{\hat G} \colon \hat G\to \hat G/(\hat G\cap F_{\rm Cox})\subset \mathop{\rm Aut} (X)\] 
is an isomorphism, that is, $\hat G\cap F_{\rm Cox}=1$. Indeed, the unipotent linear algebraic group $\hat G$ has no torsion. Hence, $\hat G\cap F_{\rm Cox}$ is an algebraic subgroup of the quasitorus $F_{\rm Cox}$ with no torsion, therefore, the trivial group. 
\end{proof}

\begin{remarks} \footnote{This remarks is due to a referee.} In the proof we have used the fact that $\mathbf{F}_m$, $m > 1$, does not contain any normal abelian subgroup. This can be deduced as follows. Such an abelian subgroup $A$ is cyclic because any subgroup of $\mathbf{F}_m$ is free. It fixes two points on the boundary $\partial\mathbf{F}_m$, namely, the ends of the Caley graph of $A\cong\mathbb{Z}$. These two points form an invariant set of $\partial\mathbf{F}_m$ provided $A$ is normal. However, no
finite set is fixed by the $\mathbf{F}_m$-action on $\partial\mathbf{F}_m$. 

More generally, no nontrivial abelian subgroup of $\mathbf{F}_m$, $m > 1$, is subnormal (see subsection~\ref{ss:high-transitive} for the definition).  
Indeed, assume there is 
a descending series $\mathbf{F}_m\unrhd N_1\unrhd \ldots\unrhd N_s\unrhd A$, where $A\neq 1$ is abelian, hence  a free cyclic group. 
One may suppose that $N_s$ is a non-abelian free group of finite rank, and then the previous result applied to the pair $(N_s, A)$ gives a contradiction. 
\end{remarks} 

\section{Tits' type alternative for a pair of root subgroups}\label{sec:2-sbgrps}

In this section we  still deal with a toric affine variety $X$ over ${\bf k}$ with no torus factor, 
and freely use the notation from \ref{sit:1.1}--\ref{sit:1.3}.
We prove the following partial result; cf.\ Theorem~\ref{thm:0.4}. 

\begin{proposition}\label{prop:1.3} Consider the group $H=\langle U_1,\,U_2\rangle\subset\mathop{\rm Aut} (X)$ generated by the root subgroups $U_i=\exp(t\partial_i)$,  $i=1,2$, 
associated with two different ray generators, say, $\rho_1$ and $\rho_2$, respectively. Then either $H$ is a unipotent algebraic group, or 
$H$ contains a free subgroup of rank $2$. 
\end{proposition}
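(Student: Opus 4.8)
The plan is to reduce the whole statement to an explicit computation in the Cox ring $\mathcal{O}(\mathbb{A}^k)={\bf k}[x_1,\ldots,x_k]$ via the lifting of Lemma~\ref{lem:lifting} and the transfer of Proposition~\ref{prop:pushforward}. Writing $\partial_i=\partial_{\rho_i,e_i}$ with $e_i\in S_i$, the lifted derivations have the triangular monomial form~\eqref{eq:elementary1},
\[
\hat\partial_1 = x_2^{q}\mu_1\,\frac{\partial}{\partial x_1}, \qquad \hat\partial_2 = x_1^{p}\mu_2\,\frac{\partial}{\partial x_2},
\]
where $p=\langle\rho_1,e_2\rangle\ge 0$ and $q=\langle\rho_2,e_1\rangle\ge 0$ (nonnegative since $e_2\in S_2$, $e_1\in S_1$), and $\mu_1,\mu_2$ are monomials in $x_3,\ldots,x_k$ only. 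By Proposition~\ref{prop:pushforward} it suffices to establish the alternative for $\hat H=\langle\hat U_1,\hat U_2\rangle\subset\Aut(\mathbb{A}^k)$ with $\hat U_i=\exp(t\hat\partial_i)$. The whole dichotomy is governed by the pair $(p,q)$, in agreement with~\eqref{eq:Rom}, which here reads $[\partial_1,\partial_2]=\partial_{p\rho_2-q\rho_1,\,e_1+e_2}$; since $\rho_1,\rho_2$ span distinct rays of the pointed cone $\Delta$ they are linearly independent, so this bracket vanishes if and only if $p=q=0$.

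First I would dispose of the case $p=0$ or $q=0$. After possibly exchanging the indices $1,2$ I may assume $q=0$, so $\hat\partial_1=\mu_1\,\partial/\partial x_1$ and $\hat\partial_2=x_1^{p}\mu_2\,\partial/\partial x_2$; both $\hat U_1,\hat U_2$ fix $x_3,\ldots,x_k$ and act on $(x_1,x_2)$ by~\eqref{eq:elementary2}. A direct conjugation computation shows that $\hat U_1$ normalizes the abelian subgroup
\[
V = \Bigl\{(x_1,x_2)\mapsto\bigl(x_1,\,x_2+\mu_2 R(x_1)\bigr)\ :\ R\in\span_{\bf k}\{x_1^{p-j}\mu_1^{j}\mid j=0,\ldots,p\}\Bigr\}\cong\mathbb{G}_a^{p+1},
\]
and that $\hat H=\hat U_1\ltimes V$ is a unipotent algebraic group. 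Proposition~\ref{prop:pushforward}(b) then gives $H\cong\hat H$ unipotent algebraic, which is alternative~(i). (The subcase $p=q=0$ is included, with $V=\hat U_2$ and $\hat H\cong\mathbb{G}_a^2$.)

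The remaining, and main, case is $p\ge 1$ and $q\ge 1$, where I expect $\mathbf{F}_2\subset H$; the tool is ping-pong performed on a single torus-fibre. Restricting the action to the invariant plane $\Phi=\{x_3=\cdots=x_k=1\}$ (note $k\ge 2$ since $\rho_1\ne\rho_2$), on which $\mu_1=\mu_2=1$, the generators become the polynomial shears
\[
\bar a\colon (x,y)\mapsto (x+y^{q},\,y),\qquad \bar b\colon (x,y)\mapsto (x,\,y+x^{p}),
\]
defined over $\mathbb{Q}$. As the vanishing of a word $w(\bar a,\bar b)=\mathrm{id}$ is an identity of polynomials with $\mathbb{Q}$-coefficients, the abstract group $\langle\bar a,\bar b\rangle$ is unchanged under base change along any embedding $\mathbb{Q}\hookrightarrow\CC$, so I may work over $\CC$ and use the usual absolute value. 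For large $N$ set $g=\bar a^{N}$, $h=\bar b^{N}$, so $g^{n}\colon(x,y)\mapsto(x+Nn\,y^{q},y)$ and $h^{m}\colon(x,y)\mapsto(x,y+Nm\,x^{p})$, and apply the table-tennis lemma to the disjoint nonempty sets
\[
A=\{(x,y)\in\CC^2: |x|>|y|,\ |x|>2\},\qquad B=\{(x,y)\in\CC^2: |y|>|x|,\ |y|>2\}.
\]
Using $|x|^{p}\ge|x|$ for $|x|>1$ and $|y|^{q}\ge|y|$ for $|y|>1$, one checks $g^{n}(B)\subseteq A$ and $h^{m}(A)\subseteq B$ for all $n,m\in\ZZ\setminus\{0\}$ once $N\ge 3$; this is exactly where $p,q\ge 1$ enter. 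The ping-pong lemma yields $\langle g,h\rangle\cong\mathbf{F}_2$, and pulling back along the restriction homomorphism $\hat H\to\Aut(\Phi)$ (a word trivial in $\hat H$ is trivial in its free image, hence trivial) gives $\mathbf{F}_2\subset\hat H$; Proposition~\ref{prop:pushforward}(a) then transfers this to $H$.

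I expect the ping-pong step to be the main obstacle: the fibre and the comparison regions must be chosen so that the estimates are uniform in $p,q\ge 1$ and survive the degenerate linear cases $p=1$ or $q=1$, in which $\langle\bar a,\bar b\rangle$ itself need not be free (for $p=q=1$ it is a conjugate of $\SL_2(\ZZ)$), the passage to the powers $\bar a^{N},\bar b^{N}$ being precisely what repairs this. The auxiliary reduction to $\CC$ to gain an absolute value, and the verification that $\hat H$ is a unipotent \emph{algebraic} group in the degenerate case (rather than merely generated by unipotents), are the two further points that require care.
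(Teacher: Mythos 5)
Your proposal is correct, and in the main case it takes a genuinely different route from the paper. The reduction to the Cox ring and the degenerate case $\min\{p,q\}=0$ coincide in substance with the paper's argument (the paper's Claim~1 embeds $\hat H$ into an explicit group of triangular transformations; your semidirect product $\hat U_1\ltimes V$ is a slightly more explicit version of the same unipotent overgroup, and your Vandermonde-type observation even shows $\hat H$ equals it). The divergence is in the case $p,q\ge 1$: the paper splits into three subcases, proving $\hat H\cong\mathbb{G}_a*\mathbb{G}_a$ by a leading-degree recursion when $c,d\ge 2$, invoking the Jung--van der Kulk amalgam ${\rm Aut}(\mathbb{A}^2)=A*_CJ$ when $c\ge2$, $d=1$, and exhibiting a surjection onto $\mathop{\rm SL}_2(\mathbb{Z})$ when $c=d=1$ --- in each case after specializing $x_3,\ldots,x_k$ at a point with nonzero coordinates, which is the same device as your restriction to the fibre $\Phi$. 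You replace all three subcases by a single archimedean ping-pong on the powers $\bar a^N,\bar b^N$ over $\CC$ (legitimate, since the word identities in question are polynomial identities over $\mathbb{Q}$), and your estimates do check out with $N\ge 3$ on the sets $A,B$ as defined. What your route buys is uniformity and independence from the Jung--van der Kulk theorem; what it gives up is the finer structural output of the paper's analysis, namely the free or amalgamated product structure of $\hat H$ and the freeness of $\langle\hat u_1,\hat u_2\rangle$ for \emph{arbitrary} nontrivial $\hat u_i$ when $\max\{c,d\}\ge 2$ --- none of which is needed for the proposition as stated, nor for Corollary~\ref{cor:3cases} and the later sections, which only use the dichotomy itself.
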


\begin{proof} 
Introducing the total coordinates $(x_1,\ldots,x_k)$, 
we let $U_1$ and $U_2$ act on $\mathbb{A}^k$ as 
$\mathbb{G}_a$-subgroups $\hat U_1$ and $\hat U_2$ of the tame automorphism group ${\rm Tame}\,(\mathbb{A}^k)$ commuting with the Cox quasitorus
$F_{\rm Cox}$, see Lemma \ref{lem:lifting}. 
We let $\hat H=\langle\hat U_1,\hat U_2\rangle$.  By Proposition \ref{prop:pushforward} it suffices to prove the above alternative for $\hat H$ instead of $H$.

Let in these coordinates $\hat e_i=(c_{ij})$ where $c_{ii}=-1$ and $c_{ij}\ge 0$ for $j\neq i$, $i\in\{1,2\}$.
One can write
\[\hat e_1=(-1,c,*,\ldots,*)\quad\text{and}\quad \hat e_2=(d,-1,*,\ldots,*),\quad\text{where}\quad c=\langle\rho_2,e_1\rangle,\quad d=\langle\rho_1,e_2\rangle,\] 
and the stars stand for nonnegative integers. The elements $\hat u_i\in \hat U_i$, $i=1,2$ can be written as 
\begin{equation}\label{eq:ui} \hat u_1=(x_1+sx_2^cN_1,x_{2},\ldots, x_k)\quad\text{and}\quad 
\hat u_2=(x_1,x_2+tx_1^dN_2,x_{3},\ldots, x_k),\end{equation} 
where $s,t\in{\bf k}$ and $N_1,N_2\in{\bf k}[x_3,\ldots,x_k]$ are nonzero monomials, cf.\ \eqref{eq:elementary1}--\eqref{eq:elementary2}. 

By \eqref{eq:Rom}, $\hat H$ is abelian (and  then $\hat H\cong \mathbb{G}_a\times\mathbb{G}_a$) if and only if $c=d=0$.  
 More generally, the following holds.

\medskip

\noindent {\bf Claim 1.} \emph{Assume $c>0$ and $d=0$. Then $\hat H=\langle \hat U_1,\hat U_2\rangle$ is a unipotent linear algebraic group.}

\medskip

\noindent \emph{Proof of Claim 1.} Under our assumptions, $\hat H$ is a closed subgroup of the unipotent linear algebraic group consisting of the triangular transformations
\[(x_1,\ldots,x_k)\mapsto (x_1 +F(x_2,N_2)N_1,x_2 +tN_2,x_3,\ldots,x_k),\]
where $t\in {\bf k}$ and $F$ runs over the linear space of homogeneous polynomials in two variables of degree $c$. So, $\hat H$ is a unipotent linear algebraic group.~\footnote{Alternatively, one can deduce the conclusion by using Proposition \ref{prop:2-cycle}.} \qed 

\smallskip
 
Suppose now that $c\ge 1$ and $d\ge 1$. In this case we show,
using ping-pong type arguments,
that $\hat H$ contains a free subgroup of rank two, see Claims 2--4. We analyze separately the cases $c,d\ge 2$, $c\ge 2$ and $d=1$, and $c=d=1$.
This analysis is close to the original Jung approach  in~\cite{Jun:1942}; cf.\ also~\cite[Lem.~4.1]{Kam:1979} and~\cite[5.31, p.~65]{Wr:1975}. 
Another reference in order is \cite{Lam:2001}, where the enhanced Tits alternative for the group $\mathop{\rm Aut} (\mathbb{A}_{\mathbb{C}}^2)$ was established playing the ping-pong on the Bass-Serre tree. On can apply this alternative to  the group $\mathop{\rm Aut} (\mathbb{A}_{K}^2)$, where $K$ is the rational function field ${\bf k}(x_3,\ldots,x_n)$.

\smallskip
 
Notice that by \eqref{eq:ui}, any $\hat h\in \hat H$ can be written as 
\begin{equation}\label{eq:h} \hat h=(p,q,x_3,\ldots,x_k)\quad\text{with}\quad p,q\in{\bf k}[x_1,\ldots,x_k]\setminus {\bf k}.\end{equation}

\medskip

\noindent {\bf Claim 2.} \emph{Assume $c,d\ge 2$. 
Then one has $\hat H=\hat U_1*\hat U_2\cong\mathbb{G}_a*\mathbb{G}_a$. 
Consequently,  any 
two non-unit elements $\hat u_i\in \hat U_i$, $i=1,2$, generate a free subgroup of rank two. }

\medskip

\noindent \emph{Proof of Claim 2.}
Fixing $\hat u_i\in \hat U_i$, $i=1,2$ as in \eqref{eq:ui} with nonzero $ s,t\in{\bf k}$, for $\hat h$ as in \eqref{eq:h} one has 
\[\hat u_1 \hat h=(p_1,q,x_3,\ldots,x_k)\quad\text{and}\quad \hat u_2 \hat h=(p,q_2,x_3,\ldots,x_k),\]
where by \eqref{eq:ui},
\begin{equation}\label{eq:formulas} p_1=p+sq^cN_1\quad\text{and}\quad q_2=q+tp^dN_2. \end{equation}
For $\deg(p)\le \deg(q)$ one gets
\begin{equation}\label{eq:u1} \deg(p_1)=c\deg(q)+\deg\,(N_1)>\deg(q), \end{equation}
and, similarly, for $\deg(p)\ge\deg(q)$ 
one deduces 
\begin{equation}\label{eq:u2} \deg(q_2)>\deg(p).\end{equation}

Consider a nontrivial reduced word $w$ in two letters, and let $\hat h=w(\hat u_1,\hat u_2)\in \hat H$, where $\hat u_1, \hat u_2\neq 1$. 
Using \eqref{eq:u1}--\eqref{eq:u2}
one concludes by recursion on the length of $w$
that $\deg(p)>\deg(q)$ if $w(\hat u_1,\hat u_2)$ 
starts on the left with $\hat u_1$, and $\deg(p)<\deg(q)$ if $w(\hat u_1,\hat u_2)$ starts with $\hat u_2$. Anyway,
$\deg(p)\neq \deg(q)$, hence $\hat h\neq 1$.  \qed

\medskip 

\noindent {\bf Claim 3.} \emph{Assume $c\ge 2$ and $d=1$. Then $\langle \hat u_1,\hat u_2\rangle$ is a free subgroup 
of rank two for any non-unit elements $\hat u_1\in \hat U_1,\,\hat u_2\in \hat U_2$.}

\medskip

\noindent \emph{Proof of Claim 3.}
The Jung-van der Kulk Theorem \cite{Jun:1942, vdK:1953} implies  the presentation
\[{\mathop{\rm Aut} }(\mathbb{A}^2)=A*_C J,\]
where $C=A\cap J,$ $A={\rm Aff}(\mathbb{A}^2)$ is the affine group, and $J$ is the de Jonqui\`eres subgroup of 
${\mathop{\rm Aut} }(\mathbb{A}^2)$
consisting of the transformations of the form 
\[(x_1,x_2)\mapsto (\alpha_1 x_1+\beta_1(x_2), \alpha_2x_2+\beta_2)\quad\text{with }\quad
\alpha_i\in {\bf k}^*,\,\, i=1,2,\,\,\, \beta_1\in {\bf k}[x_2],\,\,\,\beta_2\in {\bf k}; \]
see \cite{Dic:1983, Nag:1972, Wr:1975}, \cite[Thm.~2]{Kam:1975}, and \cite[Lem.~4.1]{Kam:1979}. 

Let $\hat u_1=\hat u_1(s)$, $\hat u_2=\hat u_2(t)$, and $N_1, \,N_2\in{\bf k}[x_3,\ldots,x_k]$ be as in \eqref{eq:ui}.
Pick a point $P_0=(x_3^0,\ldots,x_k^0)\in\mathbb{A}^{k-2}$ with only  nonzero coordinates, so that both $N_1(P_0)$ and $N_2(P_0)$ do not vanish.  
Letting
\begin{equation}\label{eq:coeff} 
b_1(s)=sN_1(P_0),\quad b_2(t)=tN_2(P_0),\quad \hat u^{(0)}_1=\hat u_1(s,P_0),\quad  \hat u^{(0)}_2=\hat u_2(t,P_0)
\end{equation}
one gets
\begin{equation}\label{eq:specialization} \langle \hat u^{(0)}_1,\,\hat u^{(0)}_2\rangle=
\langle (x_1+b_1(s)x^c_2, x_2), \,\,(x_1, x_2+b_2(t)x_1)\rangle\subset\mathop{\rm Aut} (\mathbb{A}^2),
\end{equation}
where $b_1(s),\,b_2(t)$ do not vanish for any $(s,t)\in (\mathbb{A}^1\setminus\{0\})^2$. 
Since $c>1$ and $d=1$, for any nonzero $m\in {\mathbb Z}$ one has $(\hat u_1^{(0)})^m\in J\setminus C$ 
and $(\hat u_2^{(0)})^m\in A\setminus C$ provided $(s,t)\in (\mathbb{A}^1\setminus\{0\})^2$ is fixed.  
Write $\hat h\in \langle \hat u^{(0)}_1,\hat u^{(0)}_2\rangle$ as \[\hat h=w(\hat u^{(0)}_1,\hat u^{(0)}_2),\] 
where $w$ is a reduced word in two letters.
Applying  the Jung-van der Kulk Theorem to $w(\hat u^{(0)}_1,\hat u^{(0)}_2)$ 
we conclude that $\hat h=1$ 
if and only if $w$ is the trivial word. 
Thus, one has 
$\langle \hat u^{(0)}_1,\hat u^{(0)}_2\rangle\cong \mathbf{F}_2$ for any fixed $(s,t)\in (\mathbb{A}^1\setminus\{0\})^2$. 
The specialization $(\hat u_1,\hat u_2)\mapsto (\hat u^{(0)}_1,\hat u^{(0)}_2)$ 
defines an isomorphism $\langle \hat u_1,\hat u_2\rangle \cong \mathbf{F}_2$.
\qed

\medskip

The next claim ends the proof of Proposition~\ref{prop:1.3}.

\medskip 

\noindent {\bf Claim 4.} \emph{Assume $c=d=1$. 
Then there exist $(\hat u_1,\hat u_2)\in \hat U_1\times \hat U_2$ such that the group $\langle \hat u_1,\hat u_2\rangle$ surjects onto $\mathop{\rm SL}_2({\mathbb Z})$ 
 and so, 
contains a free subgroup of rank two.}

\medskip

\noindent \emph{Proof of Claim 4.} We repeat the argument from the proof of Claim 3. Choosing in \eqref{eq:coeff} the value of $(s,t)$ such that $b_1(s)=b_2(t)=1$,  by \eqref{eq:specialization} we obtain
\[\langle \hat u_1^{(0)},\,\hat u_2^{(0)}\rangle=\langle (x_1+x_2, x_2), \,\,(x_1, x_2+x_1)\rangle= {\mathop{\rm SL}}_2({\mathbb Z}).\] 
This yields the desired surjection $\langle \hat u_1,\hat u_2\rangle\to\mathop{\rm SL}_2({\mathbb Z})$. 
It remains to recall \cite[3.1]{Wiki} 
that ${\mathop{\rm SL}}_2({\mathbb Z})$ is virtually free with 
$\langle (x_1+2x_2, x_2), \,\,(x_1, x_2+2x_1)\rangle\cong \mathbf{F}_2$.
\end{proof}

\begin{corollary}\label{cor:3cases} In the notation as before, 
the following conditions are equivalent:
\begin{itemize}
\item[(i)]
$\hat H=\langle \hat U_1,\hat U_2\rangle$ is a unipotent algebraic group;
\item[(ii)] letting $K={\bf k}[x_3,\ldots,x_k]$
the Lie algebra $\hat L={\rm Lie}(\hat\partial_1,\hat\partial_2)\subset {\rm Der}_K(K[x_1,x_2])$ 
generated by the root derivations (see~\eqref{eq:ui})
\[\hat\partial_1=x_2^cN_1\partial/\partial x_1,\quad \hat\partial_2=x_1^dN_2\partial/\partial x_2\]
is finite dimensional and nilpotent;
\item[(iii)]
\begin{equation}\label{eq:min} \min\{\langle \hat\rho_1,\hat e_2\rangle,\,
\langle\hat\rho_2,\hat e_1\rangle\}=\min\{c,d\}=0.
\end{equation}
\end{itemize}

These equivalences remain valid after taking off the hats.
\end{corollary}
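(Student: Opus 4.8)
The plan is to prove the three conditions equivalent by running the cyclic chain of implications (iii)$\Rightarrow$(ii)$\Rightarrow$(i)$\Rightarrow$(iii), arguing throughout for the hatted objects and transferring the conclusions to the unhatted ones at the very end. The transfer is free of charge: Lemma~\ref{lem:lifting}(d) gives a Lie algebra isomorphism $L\cong\hat L$, so (ii) holds for $L$ iff it holds for $\hat L$; Proposition~\ref{prop:pushforward}(b) gives $H\cong\hat H$ with $\hat H$ unipotent iff $H$ is, so (i) transfers; and the integers $c,d$ in (iii) are attached to the Demazure roots and are the same in both pictures. Hence it suffices to treat $\hat H$ and $\hat L$.

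The implication (i)$\Rightarrow$(iii) I would prove by contraposition using Proposition~\ref{prop:1.3}. If (iii) fails then $\min\{c,d\}\ge 1$, and Claims~2--4 in the proof of Proposition~\ref{prop:1.3} exhibit a free subgroup $\mathbf{F}_2\subseteq\hat H$. In characteristic zero a unipotent algebraic group is, as an abstract group, torsion-free and nilpotent, hence cannot contain a non-abelian free subgroup (any subgroup of a nilpotent group is nilpotent, while $\mathbf{F}_2$ is not). Therefore $\hat H$ is not a unipotent algebraic group and (i) fails.

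For (iii)$\Rightarrow$(ii) I would compute $\hat L$ outright. By the symmetry exchanging the two roots (which swaps $c\leftrightarrow d$ and $x_1\leftrightarrow x_2$) we may assume $d=0$, so that $\hat\partial_1=N_1x_2^{c}\,\partial/\partial x_1$ and $\hat\partial_2=N_2\,\partial/\partial x_2$ with $N_1,N_2\in K=\mathbf{k}[x_3,\ldots,x_k]$, cf.~\eqref{eq:ui}. A direct bracket computation gives $\ad(\hat\partial_2)^{\,j}(\hat\partial_1)=c(c-1)\cdots(c-j+1)\,N_1N_2^{\,j}x_2^{\,c-j}\,\partial/\partial x_1$, which is a nonzero scalar multiple of $N_1N_2^{\,j}x_2^{\,c-j}\partial/\partial x_1$ for $0\le j\le c$ (here characteristic zero is used) and vanishes for $j>c$. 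All these vector fields point in the $\partial/\partial x_1$ direction with $x_1$-free coefficients, so they mutually commute; together with $\hat\partial_2$ they span $\hat L$, whence $\dim_{\mathbf{k}}\hat L=c+2<\infty$. Setting $V=\mathrm{span}_{\mathbf{k}}\{\ad(\hat\partial_2)^{\,j}(\hat\partial_1):0\le j\le c\}$, one sees $V$ is an abelian ideal on which $\ad(\hat\partial_2)$ acts by raising $j$ (equivalently, by lowering the $x_2$-degree), so the lower central series of $\hat L$ strictly descends and terminates; thus $\hat L$ is nilpotent.

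The remaining implication (ii)$\Rightarrow$(i) is the heart of the matter and the step I expect to be the main obstacle. Granting that $\hat L$ is finite-dimensional and nilpotent, I would integrate it: exponentiating the elements of $\hat L$ should produce a unipotent algebraic subgroup of $\Aut(\mathbb{A}^k)$ containing $\hat U_1$ and $\hat U_2$, so that $\hat H=\langle\hat U_1,\hat U_2\rangle$ is itself unipotent algebraic. The delicate point is that a finite-dimensional nilpotent Lie algebra of vector fields need not a priori consist of locally nilpotent derivations --- a semisimple field such as $x_1\,\partial/\partial x_1-x_2\,\partial/\partial x_2$ can live in a nilpotent (even abelian) subalgebra, and indeed when $c=d=1$ one meets an $\mathfrak{sl}_2$-pattern, which however is not nilpotent. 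One must therefore show that nilpotency of $\hat L$, together with the fact that $\hat L$ is generated by the locally nilpotent derivations $\hat\partial_1,\hat\partial_2$, forces every element of $\hat L$ to be locally nilpotent, so that the exponential map integrates $\hat L$ to a unipotent group. This is precisely the bridge between nilpotent Lie algebras of root derivations and unipotent algebraic groups that the later Proposition~\ref{prop:2-cycle} supplies; alternatively, in the present two-generator situation one can bypass it by proving (ii)$\Rightarrow$(iii) directly, checking that for $\min\{c,d\}\ge 1$ the adjoint operator $\ad([\hat\partial_1,\hat\partial_2])$ fails to be nilpotent (so $\hat L$ is not nilpotent) --- which is immediate when $\min\{c,d\}=1$, and requires an infinite-dimensionality estimate via the bigrading of $\hat L$ when $c,d\ge 2$.
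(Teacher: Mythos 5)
Your chain (iii)$\Rightarrow$(ii)$\Rightarrow$(i)$\Rightarrow$(iii) leaves a genuine gap at the step you yourself flag as the obstacle, namely (ii)$\Rightarrow$(i), and neither of your two fallbacks closes it as stated. The first fallback --- invoking Proposition~\ref{prop:2-cycle} --- is circular: the proof of that proposition (and of Lemma~\ref{lem:ideal}, on which its equivalence (i)$\Leftrightarrow$(iii) rests) explicitly cites Corollary~\ref{cor:3cases}, so it cannot be used to establish the corollary. The second fallback --- proving (ii)$\Rightarrow$(iii) directly --- is exactly what the paper does, but substituting it into your scheme yields only (iii)$\Rightarrow$(ii)$\Rightarrow$(iii) and (i)$\Rightarrow$(iii), which does not give (iii)$\Rightarrow$(i) or (ii)$\Rightarrow$(i). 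The missing link is supplied by Claim~1 in the proof of Proposition~\ref{prop:1.3}: when $\min\{c,d\}=0$ the group $\hat H$ embeds as a closed subgroup of an explicit unipotent group of triangular transformations, so (iii)$\Rightarrow$(i). The paper's actual architecture is: (i)$\Rightarrow$(ii) is trivial since $\hat L={\rm Lie}(\hat H)$; (i)$\Leftrightarrow$(iii) is read off from Proposition~\ref{prop:1.3} (Claim~1 for one direction, Claims~2--4 plus the fact that a unipotent group is nilpotent and hence contains no $\mathbf{F}_2$ for the other); and the only new work is (ii)$\Rightarrow$(iii), done by specializing $x_3,\ldots,x_k$ to a point where $N_1,N_2$ do not vanish and then, for $c\ge1$, $d>1$, iterating ${\rm ad}(\hat\partial_1)^{d+1}(\hat\partial_2)$ to produce elements $y^e\partial/\partial x$ of unbounded degree, so that $\hat L$ is infinite dimensional (and $\hat L\cong{\rm sl}_2$ when $c=d=1$).

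Two smaller points. Your claim that the failure of nilpotency is ``immediate when $\min\{c,d\}=1$'' is only immediate when $c=d=1$ (the $\mathfrak{sl}_2$ case); for, say, $c=1$, $d=2$ you still need the degree-growth/infinite-dimensionality argument, which is why the paper splits the cases as $c=d=1$ versus $c\ge 1$, $d>1$. On the positive side, your direct computation for (iii)$\Rightarrow$(ii) --- the span of ${\rm ad}(\hat\partial_2)^{j}(\hat\partial_1)$, $0\le j\le c$, together with $\hat\partial_2$, giving a nilpotent Lie algebra of dimension $c+2$ --- is correct and is a clean Lie-algebra-level substitute for the paper's group-level Claim~1; it is not, however, a substitute for (iii)$\Rightarrow$(i).
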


\begin{proof}
The implication (i) $\Rightarrow$ (ii) is immediate; indeed, 
$\hat L={\rm Lie}(\hat H)$ provided (i) is fulfilled. 
The equivalence (i) $\Leftrightarrow$ (iii) is established in the course of the proof of Proposition~\ref{prop:1.3}.
Hence, it suffices to show (ii) $\Rightarrow$ (iii). Notice that the specialization 
$(x_1,\ldots,x_k)\mapsto (x_1,x_2,x_3^{(0)},\ldots,x_k^{(0)})$ yields a surjective homomorphism of Lie algebras
$ {\rm Der}_K(K[x_1,x_2])\to{\rm Der}_{\bf k}( {\bf k}[x_1,x_2])$.
Choosing a point $P_0=(x_3^{(0)},\ldots,x_k^{(0)})\in (\mathbb{A}^1\setminus\{0\})^{k-2}$ so that $N_1(P_0)$ and $N_2(P_0)$ 
do not vanish we may assume that $\hat\partial_1=y^c\partial/\partial x$, $\hat\partial_2=x^d\partial/\partial y$, and 
$\hat L={\rm Lie}(\hat\partial_1,\hat\partial_2)\subset  {\rm Der}_{\bf k}( {\bf k}[x,y])$ is finite dimensional and nilpotent. 
Suppose (iii) fails. If $c=d=1$ then $\hat L={\rm sl}_2({\bf k})$ is not nilpotent. If, say, $c\ge 1$ and $d>1$ then we have
\[{\rm ad}(\hat\partial_1)^{d+1}(\hat\partial_2)=-c(d+1)! y^e\partial/\partial x\in\hat L,\quad\text{where}\quad e=(d+1)c-1>c.\]
Replacing now $\hat\partial_1=y^c\partial/\partial x$ by $y^e\partial/\partial x$ and repeating the trick, 
we obtain a sequence of elements of $\hat L$ of unbounded degrees. Thus, in this case $\hat L$ has infinite dimension. 
In any case, (ii) fails, a contradiction. 

For the last assertion, see Lemma~\ref{lem:lifting}.d and Proposition~\ref{prop:pushforward}.b. 
\end{proof}

\section{Tits' type alternative for a sequence of root subgroups}\label{sec:TA}
Let as before $X$ be a  toric  affine variety with no torus factor, and let 
\[G=\langle U_1,...,U_s\rangle\] 
be the group generated by a finite set of root
subgroups $U_j=\exp(t\partial_j)\subset {\rm Aut}(X)$, $j=1,\ldots,s$, where $\partial_j$ are root derivations. 
According to Corollary~\ref{cor:3cases}, in the case that
$G$ does not contain any non-abelian free subgroup, for any $i\neq j$ either $U_i$ and $U_j$ 
belong to the same ray generator (and then commute), 
or they belong to two different ray generators $\rho$ and $\rho'$ 
and for the corresponding roots $e,e'$ one has $\min\{\langle \rho, e'\rangle, \langle\rho', e\rangle\}=0$.
In Proposition~\ref{thm:2} we establish that under these assumptions $G$ is a unipotent algebraic group. 
To be more precise, notice that 
the Lie algebra $L$ generated by the root derivations $\partial_j$, $j=1,\ldots,s$, might contain extra root derivations, 
cf.\ Example~\ref{ex:3coord}.  
Let $R_i$ be the set of Demazure roots $e_{ij}\in S_i$ of $X$ such that $\partial_{\rho_i, e_{ij}}\in L$. A priori, the cardinal ${\rm card}(R_i)$ could be infinite countable, and then the abelian subalgebra
\begin{equation}\label{eq:Li} L_i= {\rm Lie}(\partial_{\rho_i,e_{ij}} | e_{ij} \in R_i) \subset L\end{equation}
is infinite dimensional. 
 We may suppose that 
\[R_i\neq \emptyset\quad\forall i=1,\ldots,r\quad\text{and}\quad R_i=\emptyset\quad \forall i=r+1,\ldots,k.\] 
Let $R=\bigcup_{i=1}^r R_i$.
For $e\in R_i$ we let $U_e=\exp(t\partial_{\rho_i, e})$. In this section we prove the following proposition.

\begin{proposition}\label{thm:2}
Suppose that for all $e,e'\in R$ the group $\langle U_e, U_{e'}\rangle$ is unipotent. Then $G$ is a unipotent algebraic group.
\end{proposition}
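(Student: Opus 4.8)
The plan is to push everything to the Cox realization on $\mathbb{A}^k$ and then read off unipotency from the combinatorics of the Lie brackets among the root derivations. First I would replace $X$ by its characteristic space: by Lemma~\ref{lem:lifting}(d) the correspondence $\partial_{\rho_i,e}\mapsto\hat\partial_{\varepsilon_i,\hat e}$ identifies $L$ with the Lie algebra $\hat L$ generated by the lifted monomial derivations $\hat\partial_i=M_i\,\partial/\partial x_{j(i)}$ on $\mathbb{A}^k$, and by Proposition~\ref{prop:pushforward}(b) it suffices to prove that $\hat G=\langle\hat U_e\rangle$ is a unipotent algebraic group, since then $G\cong\hat G$. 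From now on everything takes place on $\mathbb{A}^k$, where the root derivations are monomial vector fields and degrees are easy to track.

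Next I would translate the hypothesis into the language of the graph $\Gamma$ of Definition~\ref{def:graph}. For two roots $e\in R_i$, $e'\in R_j$ with $i\neq j$, formula~\eqref{eq:Rom} gives $[\partial_{\rho_i,e},\partial_{\rho_j,e'}]=\partial_{d\rho_j-c\rho_i,\,e+e'}$ with $c=\langle\rho_j,e\rangle\ge 0$ and $d=\langle\rho_i,e'\rangle\ge 0$. By Corollary~\ref{cor:3cases} the unipotency of $\langle U_e,U_{e'}\rangle$ is equivalent to $\min\{c,d\}=0$; assuming this, the bracket is a scalar multiple of a root derivation attached to the single ray $\rho_i$ (if $d=0$) or $\rho_j$ (if $c=0$), so $[L_i,L_j]$ lies in $L_i$ or in $L_j$ and orients the edge of $\Gamma$ joining $i$ and $j$. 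The key observation is that the hypothesis, valid for \emph{every} pair in $R$, says exactly that no two rays $\rho_i,\rho_j$ admit roots with both $\langle\rho_j,e\rangle\ge 1$ and $\langle\rho_i,e'\rangle\ge 1$; equivalently, $\Gamma$ has no bidirected edge.

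The heart of the matter is to upgrade this local, pairwise condition to the global conclusion, which I would do as in Propositions~\ref{prop:ALS} and~\ref{prop:2-cycle}: show that the absence of bidirected edges forces $\Gamma$ to be acyclic, and that an acyclic $\Gamma$ forces $\hat L$ to be finite dimensional and nilpotent. For acyclicity I would argue by contraposition. Given an oriented cycle $v_1\to\cdots\to v_m\to v_1$ with first two vertices $v_1,v_2$, I would transport a root positively paired with $\rho_{v_2}$ around the cycle by iterated bracketing. Starting from the root $e_{v_3}\in R_{v_3}$ furnished by the edge $v_2\to v_3$ (which satisfies $\langle\rho_{v_2},e_{v_3}\rangle\ge 1$), each subsequent edge $v_a\to v_{a+1}$ lets me bracket the current root with the root in $R_{v_{a+1}}$ certifying that edge; by~\eqref{eq:Rom} and the no-bidirected-edge property the bracket again lands in $R_{v_{a+1}}$, and the pairing with $\rho_{v_2}$ stays positive. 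Closing up the cycle produces a root $\psi\in R_{v_1}$ with $\langle\rho_{v_2},\psi\rangle\ge 1$. Together with the root in $R_{v_2}$ coming from the edge $v_1\to v_2$, this is a pair with both cross-pairings $\ge 1$, i.e.\ a bidirected edge, contradicting the hypothesis through Corollary~\ref{cor:3cases}. Once $\Gamma$ is acyclic, a topological ordering of its vertices makes bracketing degree-decreasing, whence $\hat L$ is finite dimensional and nilpotent (Proposition~\ref{prop:ALS}); Proposition~\ref{prop:2-cycle} then identifies $\hat G$ with a unipotent algebraic group, and Proposition~\ref{prop:pushforward}(b) returns the claim for $G$.

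I expect the acyclicity step to be the main obstacle: one must verify that the roots produced by the iterated brackets around a putative cycle remain genuine Demazure roots lying in the correct facets $S_{v_a}$ (so that they really belong to the sets $R_{v_a}$ and the hypothesis applies to them), and that the positive pairing with $\rho_{v_2}$ is preserved at every step. Managing this bookkeeping — rather than the passage from a finite dimensional nilpotent Lie algebra to a unipotent algebraic group, which is standard — is where the real work lies.
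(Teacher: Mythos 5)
Your proposal is correct and follows essentially the same route as the paper: the hypothesis is converted via Corollary~\ref{cor:3cases} into the absence of $2$-cycles of root derivations, and the conclusion is then read off from Propositions~\ref{prop:ALS} and~\ref{prop:2-cycle} (with the Cox lift handled exactly as you describe by Lemma~\ref{lem:lifting} and Proposition~\ref{prop:pushforward}). Your forward transport of a root around a putative cycle is a mirror image of the paper's Lemma~\ref{lem:2-cycle}, where the same iterated sums $e\mapsto e+e'$ (which remain in the sets $R_i$ by~\eqref{eq:commutant}) are used to show that the matrix of pairings~\eqref{eq:matrix} is upper triangular, contradicting the cycle condition --- so the bookkeeping you flag as the main obstacle is indeed the content of that lemma, and your sketch of it is sound.
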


The proof is done at the end of the section. The assumption of Proposition~\ref{thm:2} 
is equivalent to the fact that $\langle U_e, U_{e'}\rangle$ for any $e,e'\in R$ does not contain 
any free subgroup of rank two. The latter is equivalent to the fact that \eqref{eq:min} 
holds for any $e,e'\in R$, see Proposition~\ref{prop:1.3}  and Corollary~\ref{cor:3cases}.
Theorem~\ref{thm:0.4} from the introduction is an immediate consequence of 
Propositions~\ref{prop:1.3} and~\ref{thm:2}. 
In turn,  Proposition~\ref{thm:2} follows from Propositions~\ref{prop:ALS} and~\ref{prop:2-cycle}.

\subsection{Acyclicity and nilpotent Lie algebras}\label{ss:anla}
Before passing to the proof of Proposition~\ref{thm:2}, let us give an example. 

\begin{example}\label{ex:3coord}
Consider the group $G=\langle U_1, U_2, U_3, U_4\rangle$ acting on $\mathbb{A}^3=\mathop{\rm Spec} {\bf k}[x,y,z]$,  where
$U_i=\exp(t\partial_i)$, $i=1,\ldots,4$ with
\[\partial_1=yz\frac{\partial}{\partial x},\quad \partial_2=z\frac{\partial}{\partial y},\quad \partial_3
=z^2\frac{\partial}{\partial y},\quad \partial_4=\frac{\partial}{\partial z}.\]
We have \[\partial_1=\partial_{\rho_1,e_1}, \quad \partial_2=\partial_{\rho_2,e_2},\quad \partial_3=\partial_{\rho_2,e_3},\quad\partial_4=\partial_{\rho_3,e_4},\] where the ray generators $\rho_1, \rho_2, \rho_3$ are the vectors of the standard basis in $\mathbb{A}^3$, and
\[e_1=(-1,1,1),\quad e_2=(0,-1,1),\quad e_3=(0,-1,2),\quad e_4=(0,0,-1).\]
Any pair of these root derivations verify \eqref{eq:min}. They generate the Lie algebra
\[L={\rm span}\left(\frac{\partial}{\partial x},\,y\frac{\partial}{\partial x},\,
yz\frac{\partial}{\partial x},\,z\frac{\partial}{\partial x},\,z^2\frac{\partial}{\partial x},\,z^3\frac{\partial}{\partial x},\,
\frac{\partial}{\partial y},\,z\frac{\partial}{\partial y},\,z^2\frac{\partial}{\partial y},\,\frac{\partial}{\partial z}\right).\] 
Consider the abelian Lie subalgebras \[L_1={\rm span}\left(\frac{\partial}{\partial x},\,
y\frac{\partial}{\partial x},\,yz\frac{\partial}{\partial x},\,z\frac{\partial}{\partial x},\,
z^2\frac{\partial}{\partial x},\,z^3\frac{\partial}{\partial x}\right),\]
\[L_2={\rm span}\left( \frac{\partial}{\partial y},\,z\frac{\partial}{\partial y},\,
z^2\frac{\partial}{\partial y}\right),\quad\text{and}\quad L_3={\rm span}\left(\frac{\partial}{\partial z}\right).\]
We have \[L=L_1\oplus L_2\oplus L_3,\quad\text{where}\quad [L_1,L_i]\subset L_1,\,\,i=2,3, \quad [L_2,L_3]\subset L_2,\] 
and, furthermore, 
\[{\rm ad}(L_i)(L_i)=0,\,\,i=1,2,3,\qquad {\rm ad}(L_3)^4(L_1)=0,\qquad  {\rm ad}(L_2)^2(L_1)=0,\qquad  {\rm ad}(L_3)^3(L_2)=0.\]
For the lower central series $L^i=[L,L^{i-1}]$ 
 of $L$ we obtain $L^5=0$.
Thus, $L$ is nilpotent, and so, by Proposition~\ref{prop:2-cycle}, $G$ is a unipotent algebraic group. 
\end{example}

The proof of  Proposition~\ref{thm:2} is based on Proposition~\ref{prop:ALS}, 
which strengthens \cite[Thm.~5.1]{ALS:2021} in our particular context.
Let us recall the terminology of \cite{ALS:2021} and introduce the necessary notation.

\begin{definition}\label{sit:ALS} Consider a finite sequence  
of root derivations
\[\mathcal{D}=(D_1,\ldots,D_{t}, D_{t+1})\,\,\,\text{where}\,\,\, D_i=\partial_{\rho_{j(i)},\,e_{j(i),i}}\in L_{j(i)}
\,\,\,\text{with}\,\,\, e_{j(i),i}\in R_{j(i)},\,\,\,j(i)\in\{1,\ldots,r\}.\]
One says that $\mathcal{D}$ is a \emph{cycle} (more precisely, a \emph{$t$-cycle}) if $D_{t+1}=D_1$ and
\begin{equation}\label{eq:cycle} \langle\rho_{j(i+1)}, e_{j(i),i}\rangle >0\quad \forall i=1,\ldots,t. \end{equation}
 For instance, $(D_1,D_2,D_1)$ forms a $2$-cycle if and only if \eqref{eq:min} fails, that is, 
\[\langle\rho_{j(2)}, e_{j(1),1}\rangle >0\quad\text{and}\quad \langle\rho_{j(1)}, e_{j(2),2}\rangle >0.\]
We say that $\mathcal{D}$ is a \emph{pseudo-cycle} if \eqref{eq:cycle} holds and $j(t+1)=j(1)$, 
but not necessarily $e_{j(t+1), t+1}=e_{j(1),1}$; that is, 
$\rho_{j(t+1)}=\rho_{j(1)}$ but possibly $D_{t+1}\neq D_1$.
\end{definition}

In this subsection we mainly deal with the case where $G$ contains no non-abelian free subgroup, 
or, which is equivalent, $L$ contains no 2-cycle of root derivations. We need the next technical lemma. 

\begin{lemma}\label{lem:ideal} The following conditions are equivalent:
\begin{itemize}\item[{\rm (i)}] 
$L$ contains no $2$-cycle of root derivations;
\item[{\rm (ii)}] 
$L$ contains no $2$-pseudo-cycle of root derivations; 
\item[{\rm (iii)}] 
for any pair of indices $i,j\in \{1,\ldots,r\}$ such that $i\neq j$, at least one of 
the abelian Lie subalgebras $L_i, L_j$ from~\eqref{eq:Li} 
is an ideal of the Lie algebra ${\rm Lie}\, (L_i, L_j)$;
\item[{\rm (iv)}] the Lie algebra $\rm{Lie}(\partial_{\rho_i,e},\,\partial_{\rho_j,e'})$ 
is finite dimensional and nilpotent for any pair of indices $i,j\in \{1,\ldots,r\}$ and any pair of roots $e\in R_i$, $e'\in R_j$. 
\end{itemize}
\end{lemma}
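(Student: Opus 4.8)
The plan is to treat the four conditions as four translations of the single numerical assertion~\eqref{eq:min}, and to organize the proof around condition (i). For each unordered pair of indices $\{i,j\}\subseteq\{1,\ldots,r\}$ and each pair of roots $e\in R_i$, $e'\in R_j$, I set $c=\langle\rho_j,e\rangle\ge 0$ and $d=\langle\rho_i,e'\rangle\ge 0$ (both nonnegative since $e\in S_i$, $e'\in S_j$ lie in the respective Demazure facets), and I rely throughout on the bracket formula~\eqref{eq:Rom}, which here reads $[\partial_{\rho_i,e},\partial_{\rho_j,e'}]=\partial_{d\rho_j-c\rho_i,\,e+e'}$. The whole argument is a bookkeeping of when $\min\{c,d\}=0$ in the four different languages (cycles, pseudo-cycles, ideals, nilpotency), so I expect little genuine computation beyond two bracket evaluations.

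I would first dispose of the two easy equivalences. For (i)$\Leftrightarrow$(iv), fix a pair $\partial_{\rho_i,e},\partial_{\rho_j,e'}$ and invoke Corollary~\ref{cor:3cases}: its equivalence (ii)$\Leftrightarrow$(iii), read without hats as permitted by the corollary's final sentence together with Lemma~\ref{lem:lifting}.d, says exactly that $\mathrm{Lie}(\partial_{\rho_i,e},\partial_{\rho_j,e'})$ is finite-dimensional and nilpotent iff $\min\{c,d\}=0$. By Definition~\ref{sit:ALS}, a $2$-cycle $(\partial_{\rho_i,e},\partial_{\rho_j,e'},\partial_{\rho_i,e})$ in $L$ is precisely a pair with $c,d>0$, so ``$L$ has no $2$-cycle'' is the same as ``every such pair generates a finite-dimensional nilpotent Lie algebra'', i.e.\ (iv). For (i)$\Leftrightarrow$(ii), a $2$-cycle is a special $2$-pseudo-cycle, giving (ii)$\Rightarrow$(i) for free; conversely a $2$-pseudo-cycle $(\partial_{\rho_i,e},\partial_{\rho_j,e'},\partial_{\rho_i,e''})$ requires only $c>0$, $d>0$, and the existence of some $e''\in R_i$, which holds as $R_i\neq\emptyset$. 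These are exactly the conditions making $(\partial_{\rho_i,e},\partial_{\rho_j,e'},\partial_{\rho_i,e})$ a $2$-cycle, so the two nonexistence statements coincide.

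The substantive equivalence is (i)$\Leftrightarrow$(iii), handled one pair $\{i,j\}$ at a time. For (i)$\Rightarrow$(iii) I use the dichotomy that absence of a $2$-cycle between the rays $i,j$ forbids simultaneously finding $e\in R_i$ with $\langle\rho_j,e\rangle>0$ and $e'\in R_j$ with $\langle\rho_i,e'\rangle>0$; hence either $c=0$ for all $e\in R_i$, or $d=0$ for all $e'\in R_j$. In the first case every bracket equals $d\,\partial_{\rho_j,e+e'}$, which is $0$ when $d=0$ and, when $d>0$, a nonzero multiple of $\partial_{\rho_j,e+e'}$ with $e+e'\in S_j$ (as $\langle\rho_j,e+e'\rangle=-1$, $\langle\rho_i,e+e'\rangle=d-1\ge 0$, and $\langle\rho_l,e+e'\rangle\ge 0$ for $l\neq i,j$); since $R_j$ collects all roots whose derivation lies in $L$, this forces $e+e'\in R_j$, so the bracket lies in $L_j$. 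Thus $[L_i,L_j]\subseteq L_j$, whence $\mathrm{Lie}(L_i,L_j)=L_i+L_j$ and $L_j$ is an ideal; the symmetric case gives $L_i$. For (iii)$\Rightarrow$(i) I argue by contraposition: a $2$-cycle provides $e,e'$ with $c,d>0$, and then $[\partial_{\rho_i,e},\partial_{\rho_j,e'}]=\partial_{d\rho_j-c\rho_i,\,e+e'}$ has $\rho$-vector not proportional to either $\rho_j$ or $\rho_i$, so it lies in neither $L_j$ nor $L_i$, and neither can be an ideal.

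The main obstacle I anticipate is the comparison of $\rho$-vectors in this last step: I must show $d\rho_j-c\rho_i$ is not a scalar multiple of $\rho_j$ (nor of $\rho_i$) when $c,d>0$. This reduces to the linear independence of two distinct ray generators, which holds because two distinct extremal rays of $\Delta$ cannot be opposite (their span would lie in $\Delta$ and neither ray would remain extremal), so $\rho_i\not\parallel\rho_j$; together with a degree comparison in the graded algebra $\mathcal{O}(X)$ — each nonzero homogeneous element of $L_j$ of degree $e+e'$ is a multiple of $\partial_{\rho_j,e+e'}$, and $\partial_{\rho,e}=\partial_{\rho',e}$ forces $\rho=\rho'$ since $\Delta^\vee$ spans $N^\vee_{\mathbb Q}$ — this forces $c=0$, the desired contradiction. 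The only other point to keep straight is that $R_i$ records \emph{all} roots $e$ with $\partial_{\rho_i,e}\in L$, not merely the generators, so that every bracket landing in the $\rho_j$-family is automatically absorbed into $L_j$.
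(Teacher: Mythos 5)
Your proposal is correct and follows essentially the same route as the paper: both reduce all four conditions to the numerical statement~\eqref{eq:min} via the bracket formula~\eqref{eq:Rom} and delegate the nilpotency equivalence to Corollary~\ref{cor:3cases}, with your case split ``either $\langle\rho_j,e\rangle=0$ for all $e\in R_i$ or $\langle\rho_i,e'\rangle=0$ for all $e'\in R_j$'' being exactly the paper's dichotomy behind~\eqref{eq:zero-coord} and~\eqref{eq:commutant}. Your treatment of (iii)$\Rightarrow$(i) is in fact slightly more careful than the paper's (which asserts without comment that $[L_j,L_i]\subset L_i$ forces $\langle\rho_i,e'\rangle=0$), since you spell out the linear independence of distinct ray generators and the graded comparison needed to see that $\partial_{d\rho_j-c\rho_i,\,e+e'}$ with $c,d>0$ lies in neither $L_i$ nor $L_j$.
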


\begin{proof} (i) $\Leftrightarrow$ (ii). 
Assume (i) holds. Then we have
\begin{equation}\label{eq:assumption}
\min\{\langle \rho_i,e'\rangle,\,\langle \rho_j,e\rangle\}=
0\qquad\forall e\in R_i,\quad \forall e'\in R_j\quad\text{with}\quad 1\le i\neq j\le r.
\end{equation}
Condition (ii) is clearly fulfilled if $L_i$ and $L_j$ commute. 
 Otherwise, up to interchanging $i$ and $j$, there exists $e_i\in R_i$ such that
$\langle \rho_j,e_i\rangle=c>0$, see~\eqref{eq:Rom}. By virtue of
 \eqref{eq:assumption} one has
\begin{equation}\label{eq:zero-coord} \langle \rho_i,e'\rangle=0\quad\forall e'\in R_j.\end{equation}  
It follows that $L$ has no 2-pseudo-cycle, that is, (ii) holds. 
The converse implication is immediate. 

 (ii) $\Leftrightarrow$ (iii). Assume (ii) holds. Then~\eqref{eq:assumption} is fulfilled. 
 As before, (iii) is evidently true if $L_i$ and $L_j$ commute. Suppose this is not the case, 
 and let $\langle \rho_j,e_i\rangle=c>0$ for some $e_i\in R_i$.
 From  \eqref{eq:Rom} and \eqref{eq:zero-coord}  one deduces that $e_i+e'\in R_i$ for any $e'\in R_j$, and
\begin{equation}\label{eq:commutant} [\partial_{\rho_j,e'},\,\partial_{\rho_i,e_i}]=c\partial_{\rho_i,e_i+e'}\in 
L_i\,\,\,\forall e'\in R_j,
\end{equation} 
that is,
\begin{equation}\label{eq:inclusion} 0\neq [L_j, L_i]\subset L_i.\end{equation} 
Thus, (iii) is fulfilled. To show the converse, notice that $[L_j, L_i]\subset L_i$ for $i\neq j$ 
implies~\eqref{eq:commutant} for any $e_i\in R_i,\,e'\in R_j$ with $c=\langle\rho_j,e_i\rangle$, 
and also implies $\langle\rho_i,e'\rangle=0$. 
Thus,~\eqref{eq:assumption} holds, and so, one has the implication (iii) $\Rightarrow$ (ii). 

 The equivalence (iv) $\Leftrightarrow$ (i) holds by Corollary~\ref{cor:3cases}. 

\end{proof} 

\begin{definition}\label{def:graph} To any Lie algebra $L$ as before 
we associate a directed graph $\Gamma_r=\Gamma_r(L)$ on $r$ vertices $\{L_i\}_{i=1,\ldots,r}$, 
where a directed edge $[L_j\rightarrow L_i]$ joins the vertices $L_i$ and $L_j$ 
if and only if  $\langle \rho_j,e_i\rangle>0$ for some $e_i\in R_i$. 
\end{definition}

Thus, there is no edge joining the vertices $L_i$ and $L_j$ of $\Gamma_r$ 
if and only if $[L_i, L_j]=0$, 
that is, the Lie algebra ${\rm Lie}\, (L_i, L_j)$ is abelian. Furthermore, $\Gamma_r$ 
has no bidirected edge if and only if $L$ has no 2-pseudo-cycle of root derivations. 
For instance, this holds for the following graph $\Gamma_3=\Gamma_3(L)$ 
associated with the Lie algebra $L$ from Example~\ref{ex:3coord}:
\begin{equation}
\label{diag:gamma}
\Gamma_3:\quad\vcenter{
\xymatrix@R=1em{
L_3\ar[dr]\ar[rr]&&L_2\ar[dl]
\\
&L_1&
} }
\end{equation}

\begin{lemma}\label{lem:2-cycle} The following are equivalent:
\begin{itemize}\item[{\rm (i)}] $L$ contains no pseudo-cycle  of root derivations;
\item[{\rm (ii)}] $L$ contains no cycle  of root derivations; 
\item[{\rm (iii)}] $L$ contains no $2$-cycle  of root derivations. 
\end{itemize}
\end{lemma}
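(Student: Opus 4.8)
The plan is to prove the equivalence of three acyclicity conditions for the directed graph $\Gamma_r$, exploiting the structure already laid out in Lemma~\ref{lem:ideal}. Observe that the implications (i)~$\Rightarrow$~(ii)~$\Rightarrow$~(iii) are trivial by specialization: a pseudo-cycle is a more general object than a cycle (we do not insist $D_{t+1}=D_1$, only $\rho_{j(t+1)}=\rho_{j(1)}$), and a $2$-cycle is a cycle of the shortest possible length. So the entire content of the lemma is the reverse implication (iii)~$\Rightarrow$~(i), which I would restate in graph-theoretic language via Definition~\ref{def:graph}: having no $2$-cycle of root derivations means, by the last sentence of Lemma~\ref{lem:ideal} together with the remark after Definition~\ref{def:graph}, precisely that $\Gamma_r$ has no bidirected edge, while having no pseudo-cycle means $\Gamma_r$ has no oriented cycle at all. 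Thus the real statement is purely combinatorial: a finite directed graph with no bidirected edge (no $2$-cycle) has no oriented cycle of any length.

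First I would set up the translation carefully. A $t$-cycle $(D_1,\ldots,D_t,D_1)$ with $D_i\in L_{j(i)}$ and $\langle\rho_{j(i+1)},e_{j(i),i}\rangle>0$ corresponds exactly to a closed directed walk $L_{j(1)}\to L_{j(2)}\to\cdots\to L_{j(t)}\to L_{j(1)}$ in $\Gamma_r$, since the edge condition $\langle\rho_j,e_i\rangle>0$ for some $e_i\in R_i$ is the defining condition for the directed edge $[L_j\to L_i]$. A pseudo-cycle is the same combinatorial object, merely with the endpoint root relaxed; so pseudo-cycles and cycles in $L$ both amount to oriented cycles in $\Gamma_r$, and the distinction between them evaporates at the graph level — this already yields (i)~$\Leftrightarrow$~(ii) once the correspondence is in place. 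It remains to show that the absence of a bidirected edge forces the absence of any oriented cycle.

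The hard part, and the crux of the argument, is precisely this last step, and I expect it to rest on an ordering obtained from the facet structure of the cone. The key is that each vertex $L_i$ carries the ray generator $\rho_i$, and a directed edge $[L_j\to L_i]$ requires $\langle\rho_j,e_i\rangle>0$ for some Demazure root $e_i\in R_i\subset S_i$; since $e_i\in S_i$ forces $\langle\rho_i,e_i\rangle=-1<0$ while $\langle\rho_\ell,e_i\rangle\ge 0$ for all $\ell\neq i$, the sign pattern of the pairings $\langle\rho_\cdot,e_i\rangle$ is rigidly controlled. I would argue that one can extract from the no-$2$-cycle hypothesis a strict partial order (or at least a total preorder refining into an acyclic orientation): if $[L_j\to L_i]$ is present then $[L_i\to L_j]$ is absent, so each unordered pair of adjacent vertices is oriented one way only, giving a tournament-like structure on the adjacency. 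The plan is to show that any oriented cycle of minimal length $t\ge 3$ could be shortened using the bracket relation~\eqref{eq:Rom} and the inclusion $[L_j,L_i]\subset L_i$ guaranteed whenever the relevant $2$-subcycle is absent (equation~\eqref{eq:inclusion} in Lemma~\ref{lem:ideal}); propagating the root $e_i+e'\in R_i$ produced by~\eqref{eq:commutant} around the cycle should create a shortcut edge or a forbidden $2$-cycle, contradicting either minimality or the hypothesis.

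Concretely, I would take a shortest oriented cycle $L_{j(1)}\to\cdots\to L_{j(t)}\to L_{j(1)}$ and examine three consecutive vertices $L_a\to L_b\to L_c$. The edge $L_a\to L_b$ gives $\langle\rho_a,e_b\rangle>0$ and, by the no-$2$-cycle assumption applied to the pair $(a,b)$, forces $\langle\rho_b,e_a\rangle=0$ for every root in $R_a$; likewise $L_b\to L_c$ forces $\langle\rho_c,e_b\rangle=0$. The goal is to show these constraints, combined via~\eqref{eq:Rom}, either produce a direct edge $L_a\to L_c$ (yielding a cycle of length $t-1$, contradicting minimality) or else propagate a positive pairing that closes up into a $2$-cycle. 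I expect this reduction to be the genuine obstacle, since it requires tracking how the roots $e_{j(i),i}$ and their sums behave under the bracket as one traverses the cycle, and verifying that the ``outgoing'' coordinate information accumulates monotonically enough to preclude returning to the start. Once the minimal oriented cycle is ruled out for $t\ge 3$, and $t=2$ is excluded by hypothesis directly, no oriented cycle survives, giving (iii)~$\Rightarrow$~(i) and closing the loop of equivalences.
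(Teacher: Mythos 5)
Your reduction to (iii) $\Rightarrow$ (i), the translation into the graph $\Gamma_r$, and the overall strategy (take a shortest oriented cycle and use the bracket relation to propagate positivity and derive a contradiction) all match the paper's argument in substance. But two things need fixing. First, the sentence ``the real statement is purely combinatorial: a finite directed graph with no bidirected edge has no oriented cycle of any length'' is false as stated -- a directed triangle is a counterexample -- so the statement cannot be reduced to graph combinatorics alone; you partially walk this back later, but the framing should be corrected, since the arithmetic of the Demazure roots is indispensable.

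Second, and more seriously, the decisive computation is announced but not performed: you write that propagating the root $e_i+e'$ around the cycle ``should create a shortcut edge or a forbidden $2$-cycle'' and that you ``expect this reduction to be the genuine obstacle.'' That obstacle is exactly the content of the lemma, and it must be carried out. Concretely, for three consecutive vertices $L_{j(i)}\to L_{j(i+1)}\to L_{j(i+2)}$ of a repetition-free cycle one needs: (a) the no-$2$-cycle hypothesis forces $\langle\rho_{j(i)},e_{j(i+1),i+1}\rangle=0$, so that by~\eqref{eq:Rom} the bracket of the two root derivations is a nonzero multiple of $\partial_{\rho_{j(i)},\,e'}$ with $e'=e_{j(i),i}+e_{j(i+1),i+1}\in R_{j(i)}$; (b) since the $j(l)$ are pairwise distinct, $\langle\rho_{j(i+2)},e_{j(i),i}\rangle\ge 0$ (Demazure root condition), whence $\langle\rho_{j(i+2)},e'\rangle\ge\langle\rho_{j(i+2)},e_{j(i+1),i+1}\rangle>0$, which is the shortcut edge; (c) applying the no-$2$-cycle hypothesis to this new positive pairing kills $\langle\rho_{j(i)},\cdot\rangle$ on all of $R_{j(i+2)}$, and iterating around the cycle eventually forces $\langle\rho_{j(1)},e_{j(t),t}\rangle=0$, contradicting the closing condition of the cycle. (The paper organizes this as showing that the matrix $\left(\langle\rho_{j(l)},e_{j(i),i}\rangle\right)$ becomes upper triangular; your minimal-counterexample packaging would also work once (a)--(b) are verified, but as written the dichotomy ``shortcut or $2$-cycle'' is asserted, not proved.) Until steps (a)--(c) are supplied, the proof has a genuine gap at its central point.
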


\begin{proof} 
It suffices to prove (iii)$\Rightarrow$(i), the two other implications being immediate. 

Assume $L$ contains no 2-cycle, and then also no 2-pseudo-cycle of root derivations, see Lemma~\ref{lem:ideal}. 
Suppose to the contrary that  $L$ has a pseudo-cycle  of root derivations $\mathcal{D}=\{D_1,\ldots,D_N, D_{N+1}\}$ with $N\ge 3$. 
Then $\Gamma_r$ has the oriented cycle
\[L_{\rho_{j(1)}}\rightarrow L_{\rho_{j(N)}}\rightarrow\ldots\rightarrow L_{\rho_{j(2)}}\rightarrow L_{\rho_{j(1)}}.\] 
The sequence $\rho_{j(1)},\ldots,\rho_{j(N)}$ of the corresponding ray generators can eventually  contain repetitions. 
However, it is possible to subtract a subsequence 
$\rho_{j(1)},\ldots,\rho_{j(t)}$  without repetition, where $3\le t\le N$, such that $\rho_{j(t+1)}=\rho_{j(1)}$. 
Then $\mathcal{D}'=\{D_1,\ldots,D_t, D_{t+1}\}$ is again a pseudo-cycle, and the cycle 
\[L_{\rho_{j(1)}}\rightarrow L_{\rho_{j(t)}}\rightarrow\ldots\rightarrow L_{\rho_{j(2)}}\rightarrow L_{\rho_{j(1)}}\] 
has no self-intersection.
To any $e\in R$ we associate the integer vector of length $t$,
\[v(e)=(\langle\rho_{j(1)},e\rangle,\ldots, \langle\rho_{j(t)},e\rangle)\in{\mathbb Z}^t.\]
One has
\begin{equation}\label{eq:matrix}
\begin{aligned} v(e_{j(1),1})&=(-1,\bullet,*,\ldots,*,*)\\
v(e_{j(2),2})&=(0,-1,\bullet,*,\ldots,*)\\
v(e_{j(3),3})&=(*,0,-1,\bullet,\ldots,*)\\
\,&\,\,\, \vdots\\
v(e_{j(t-1),t-1})&=(*,*,\ldots,0,-1,\bullet)\\
v(e_{j(t),t})&=(\bullet,*,*,\ldots,0,-1)
\end{aligned}
\end{equation}
The stars in~\eqref{eq:matrix} stand for nonnegative integers, the bullets stand for positive integers,
and the zeros on the lower subdiagonal
 are due to \eqref{eq:cycle} and  \eqref{eq:assumption}. In fact, \eqref{eq:cycle} 
 and  \eqref{eq:assumption} imply
 \begin{equation}\label{eq:zero-product} 
 \langle\rho_{j(i)},\,e\rangle=0\quad\forall e\in R_{j(i+1)}.
 \end{equation}
From  \eqref{eq:Rom}, \eqref{eq:cycle} and \eqref{eq:zero-product}  one deduces 
\[\langle\rho_{j(i+2)},e'_{j(i),i}\rangle>0\quad\text{where}\quad e_{j(i),i}':=e_{j(i),i}+e_{j(i+1),i+1}\in R_{j(i)},\,\,\,i=1,\ldots,t-2.\]
Then \eqref{eq:assumption} gives \[\langle\rho_{j(i)},e\rangle=0
\quad\forall e\in R_{j(i+2)}.\]
This means that the second lower subdiagonal in \eqref{eq:matrix} consists of zeros. 
In the same fashion one can show that the third lower subdiagonal in \eqref{eq:matrix} consists of zeros. Finally, we arrive by recursion 
to the conclusion that the matrix in \eqref{eq:matrix} 
is upper triangular. Moreover, one has
\[\langle\rho_{j(t+1)},e\rangle=\langle\rho_{j(1)},e\rangle=0\quad\forall e\in R_{j(t)}.\] 
The latter contradicts \eqref{eq:cycle} for $i=t$ and $e=e_{j(t),t}$.
\end{proof}

The following statement strengthens Theorem~5.1 in \cite{ALS:2021} in application to our (simpler) setup. 
For the convenience of the reader we provide a complete proof, which exploits Lemma~\ref{lem:2-cycle}.

\begin{proposition}\label{prop:ALS}
Assume $L$ contains no 2-cycle of root derivations. 
Then the associated graph $\Gamma_r$ is acyclic, that is, does not contain any oriented cycle, and the Lie algebra $L$ is finite-dimensional and nilpotent.
\end{proposition}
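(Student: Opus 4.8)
The plan is to prove the two assertions—acyclicity of $\Gamma_r$ and the nilpotency/finite-dimensionality of $L$—in that order, since the first is the structural backbone for the second. For acyclicity, I would argue directly: by Lemma~\ref{lem:2-cycle}, the hypothesis that $L$ contains no $2$-cycle of root derivations is equivalent to $L$ containing no cycle (and no pseudo-cycle) of root derivations at all. By Definition~\ref{def:graph}, an oriented cycle in $\Gamma_r$ is precisely a sequence of distinct vertices $L_{j(1)}\to L_{j(t)}\to\cdots\to L_{j(2)}\to L_{j(1)}$ with each edge recording an inequality $\langle\rho_{j(i+1)},e_i\rangle>0$ for some $e_i\in R_{j(i)}$; such a configuration is exactly a pseudo-cycle of root derivations in the sense of Definition~\ref{sit:ALS}. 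Hence an oriented cycle in $\Gamma_r$ would yield a pseudo-cycle in $L$, contradicting Lemma~\ref{lem:2-cycle}. This establishes that $\Gamma_r$ is acyclic.

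Once $\Gamma_r$ is acyclic, I would exploit its topological order. An acyclic directed graph admits a numbering of its vertices so that every edge $[L_j\to L_i]$ satisfies $j>i$ (after relabeling); equivalently, the relation ``$L_j$ acts nontrivially on $L_i$'' is compatible with a partial order. The key structural input is Lemma~\ref{lem:ideal}(iii) together with its refinement~\eqref{eq:commutant}–\eqref{eq:inclusion}: whenever there is an edge $[L_j\to L_i]$, one has $0\neq[L_j,L_i]\subset L_i$, and when there is no edge the subalgebras commute. Thus $L=\bigoplus_{i=1}^r L_i$ as a vector space, each $L_i$ is abelian, and the bracket of the summands respects the orientation of $\Gamma_r$, always landing in the summand that sits lower in the topological order. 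This gives $L$ the structure of a graded (or at least filtered) Lie algebra with the nilpotency built into the acyclicity.

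For the finite-dimensionality and nilpotency, I would set up a recursion on the topological order of the vertices, mirroring the computation displayed in Example~\ref{ex:3coord}. The source vertices of $\Gamma_r$ (those with no incoming edges) generate abelian ideals or act by ad-nilpotent operators on the lower vertices; concretely, for an edge $[L_j\to L_i]$ the operator $\operatorname{ad}(D)$ for $D\in L_j$ raises the degree of the root derivations in $L_i$ in a controlled way, and the absence of a reverse edge (no bidirected edges, by acyclicity) forces this raising process to terminate—exactly as the triangular matrix~\eqref{eq:matrix} in the proof of Lemma~\ref{lem:2-cycle} shows that iterated brackets cannot cycle back to increase degrees indefinitely along a closed loop. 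I would argue that each $\operatorname{ad}(L_j)$ acts nilpotently on each $L_i$, that each $L_i$ is finite-dimensional (the set $R_i$ of reachable roots is finite because degrees are bounded along the finitely many directed paths into $L_i$), and that the lower central series terminates: a bracket of length exceeding the longest directed path in $\Gamma_r$ must vanish.

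The main obstacle I anticipate is establishing finite-dimensionality of the individual $L_i$, i.e.\ that each $R_i$ is finite. The brackets~\eqref{eq:commutant} can produce new roots $e_i+e'\in R_i$, and a priori one might fear an infinite orbit of such sums; indeed the remark before~\eqref{eq:Li} explicitly flags that $\operatorname{card}(R_i)$ could be infinite countable in general. The acyclicity of $\Gamma_r$ is what rules this out: because there is no directed cycle, the set of monomials $M_j$ appearing in the root derivations of $L_i$ lives in finitely many variables and is bounded in degree by the structure of incoming paths, so the generation process stabilizes. Making this degree bound precise—transporting the triangular structure of~\eqref{eq:matrix} into a uniform bound on the degrees of the monomials arising in each $L_i$, and thereby into a global bound on the nilpotency class of $L$—is the technical heart of the argument, and is where I would invest the most care, closely following and adapting the recursive degree estimate already carried out in the proof of Lemma~\ref{lem:2-cycle}.
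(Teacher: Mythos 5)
Your outline follows the paper's proof quite closely in structure: acyclicity of $\Gamma_r$ via Lemma~\ref{lem:2-cycle}, a topological re-enumeration of the vertices producing the triangular commutator relations, and then finiteness plus nilpotency by recursion along that order. The acyclicity part is complete and correct. However, the two quantitative steps you defer are exactly where the content lies, and the reasons you sketch for them are not the right ones. For finiteness of $R_i$, the claim that ``degrees are bounded along the finitely many directed paths into $L_i$'' does not identify the actual mechanism: iterated brackets can add roots from the same $R_j$ to a root of $R_i$ arbitrarily many times without ever traversing a longer path in $\Gamma_r$, so the number or length of paths bounds nothing by itself. What stops the process is the positivity constraint on Demazure roots: by the triangular relations, every $e\in R_{t-1}$ decomposes as $e=e^{(0)}_{t-1}+\sum_{i\ge t}\sum_j e_{i,j}$ with $e^{(0)}_{t-1}$ one of the finitely many generator roots and $e_{i,j}\in R_i$; each summand $e_{i,j}$ contributes $-1$ to the $i$-th total coordinate $\langle\rho_i,\cdot\rangle$ while the resulting root must satisfy $\langle\rho_i,e\rangle\ge 0$, and this bounds the number of summands taken from each $R_i$, recursively, since the later $R_i$ are already known to be finite. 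Also, the recursion you propose to ``adapt'' from the proof of Lemma~\ref{lem:2-cycle} is not there: that proof establishes triangularity of the matrix of pairings, not a degree bound.

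Second, the asserted nilpotency bound --- that a bracket of length exceeding the longest directed path in $\Gamma_r$ must vanish --- is false. Already with a single edge $[L_2\to L_1]$ the brackets ${\rm ad}(\partial_2)^m(\partial_1)$ can be nonzero for $m$ up to roughly $\langle\rho_2,e_1\rangle$, which has nothing to do with the path length $1$. The correct argument (the one the paper gives) is that once $R$ is known to be finite, one has ${\rm ad}(L_j)^N(L_i)=0$ for $N$ large and all $j\ge i$, because $\langle\rho_j,e+e_1+\dots+e_N\rangle=\langle\rho_j,e\rangle-N\le -2$ is impossible for a Demazure root; combined with the triangular relations this yields ${\rm ad}(L)^{Nr}(L)=0$. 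You do list ad-nilpotency of each ${\rm ad}(L_j)$ on each $L_i$ as an ingredient, but you give no reason for it, and it cannot be derived from acyclicity of $\Gamma_r$ alone --- it requires the finiteness of $R$ together with the coordinate inequality above.
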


\begin{proof}
We freely use the notation from the proof of Lemma \ref{lem:2-cycle}. Consider the one-dimensional 
Lie subalgebras $l_{\rho_i,e_i}$ of $L_i$ generated by the root derivations, where
\[l_{\rho_i,e_i}={\rm span}(\partial_{\rho_i,e_i}) ={\bf k}\partial_{\rho_i,e_i}\quad\text{with}\quad e_i\in R_i.\] 
Since $L$ has no 2-cycle
then \eqref{eq:assumption} holds. Hence, for $i\neq j$ there is the alternative:
\begin{equation}\label{eq:alternative} \text{either}\quad [l_{\rho_i,e_i}, l_{\rho_j,e_j}]=0,\quad\text{or}\quad  
[l_{\rho_i,e_i}, l_{\rho_j,e_j}]\in \{l_{\rho_i,e_i+e_j},\, l_{\rho_j,e_i+e_j}\}. 
\end{equation}
Due to \eqref{eq:Rom} one has
\begin{equation}\label{eq:equiv}
[l_{\rho_i,e_i}, l_{\rho_j,e_j}]=l_{\rho_i,e_i+e_j} \quad\text{if and only if}\quad \langle \rho_j,e_i\rangle>0.
\end{equation}
In the latter case $\Gamma_r$ contains the directed edge $[L_j\rightarrow L_i]$. 
It is clear that \[L_i=\bigoplus_{e\in R_i} l_{\rho_i,e}\quad\text{and}\quad L=\bigoplus_{i=1}^r L_i.\]
Therefore, one has 
\begin{equation}\label{eq:dim} \dim(L)=\sum_{i=1}^r \dim(L_i)=\sum_{i=1}^r {\rm card}\,(R_i)={\rm card}\,(R).\end{equation}
Let us show that under our assumptions $\Gamma_r$ is acyclic, that is, does not contain any oriented cycle. 
Indeed, given an oriented cycle  in $\Gamma_r$,
\[L_{j(1)}\to L_{j(2)}\to\ldots\to L_{j(t)}\to L_{j(t+1)}=L_{j(1)},\] one can find a sequence of roots  
$e_{j(i),i}\in R_{j(i)}$ such that, with the usual convention $\rho_{j(t+1)}=\rho_{j(1)}$,  one has
\[\langle\rho_{j(i+1)}, e_{j(i),i}\rangle>0, \quad i=1,\ldots,t.\] 
Thus, $\mathcal{D}=(D_i=\partial_{\rho_{j(i)},\,e_{j(i),i}})_{i=1,\ldots,t}$ is a pseudo-cycle of root derivations in $L$. 
By Lemma~\ref{lem:2-cycle}, 
the latter contradicts our assumption on absence of 2-cycles in $L$.  

A vertex $L_i$ is called a \emph{sink} if either $L_i$ is isolated in $\Gamma_r$, or
all the incident edges of $\Gamma_r$ at $L_i$ have the incoming direction, that is, $L_i$ does not emit any edge. The vertex $L_i$  of $\Gamma_r$ is
a sink  if and only if $L_i$ is an ideal of the Lie algebra $L$. 

The end vertex of any maximal oriented path in $\Gamma_r$ is a sink. Since $\Gamma_r$ is acyclic it has at least one sink.
Moreover, any connected component of $\Gamma_r$ contains a sink. 

We can choose a new enumeration of the vertices of $\Gamma_r$  taking for $L_1$ a vertex which is a sink of $\Gamma_r$. Deleting $L_1$ from 
$\Gamma_r$ along with its incident edges yields a directed graph $\Gamma_{r-1}$. 
The corresponding Lie subalgebra of $L$ still has no pseudo-cycle of root derivations. 
Hence, $\Gamma_{r-1}$ has at least one sink. 
We choose a sink of $\Gamma_{r-1}$ to be $L_2$, etc. 
By construction, with this new enumeration one has (cf.\ Example \ref{ex:3coord})
\begin{equation}\label{eq:commutator-relations} 
\begin{aligned} 
\,[L_i,L_1] & \subset L_1, \quad i=2,\ldots,r,\\
[L_i,L_2] & \subset L_2, \quad i=3,\ldots,r,\\
\,&\,\,\, \vdots\\
\,\,\,[L_{r},L_{r-1}] & \subset L_{r-1},\\
\,\,\,\,\,\,[L_{r},L_{r}] &= 0\,.
\end{aligned} 
\end{equation}

To show that $L$ is of finite dimension, we use the enumeration of the subalgebras $L_j\subset L$ satisfying \eqref{eq:commutator-relations}. 
We establish that the total coordinates of the vectors in $R$ are uniformly bounded above, and so, $R$ is finite.  Due to \eqref{eq:dim} this yields the result.

At the beginning of Section~\ref{sec:TA} we defined $L$ as the Lie algebra generated by the finite set of root derivations $\partial_i$, $i=1,\ldots,s$. 
Given a  ray generator $\rho_j$, consider all the root derivations $\partial_i$ among $\partial_1,\ldots,\partial_s$ which belong to $\rho_j$, 
and let $R_j^{(0)}\subset R_j$ be the set of their roots. 
It follows from \eqref{eq:alternative} and \eqref{eq:commutator-relations} that $R_r=R_r^{(0)}$, and so, $R_r$ is finite. 
Furthermore, by \eqref{eq:zero-coord}, \eqref{eq:inclusion}, and \eqref{eq:commutator-relations} for any $e\in R_r$  one has
\[ \langle\rho_i,\,e\rangle=0\quad\forall i=1,\ldots,r-1\quad\text{and}\quad \langle\rho_r,\,e\rangle=-1.
\]
Again by \eqref{eq:alternative} and \eqref{eq:commutator-relations}, any root $e\in R_{r-1}$ is of the form  
\begin{equation}\label{eq:decomp} e=e^{(0)}_{r-1}+e_{r,1}+\ldots+e_{r,m}\quad\text{with}\quad e_{r-1}^{(0)}\in R_{r-1}^{(0)}\quad\text{and}\quad 
e_{r,i}\in R_r,\,\,\,i=1,\ldots,m,
\end{equation}
where the lattice vectors $e_{r,i}\in R_r$ are not necessarily distinct. We claim that \[0\le m\le \langle \rho_{r},\,e^{(0)}_{r-1}\rangle.\] 
Indeed, for the $r$th total coordinate of the lattice vectors in  \eqref{eq:decomp}  we have
\[\langle \rho_{r},\,e_{r,i}\rangle=-1, \,\,\,i=1,\ldots,m, \quad\text{and}\quad \langle \rho_{r},\,e\rangle=\langle \rho_{r},\,e^{(0)}_{r-1}\rangle-m\ge 0.\] 
Since both $R_{r-1}^{(0)}$ and $R_r$ are finite, we conclude that $R_{r-1}$ is as well. 

Suppose by induction that the $R_i$ are finite for $i=t,\ldots,r$, where $2\le t\le r-1$. By \eqref{eq:alternative} and \eqref{eq:commutator-relations},  any root $e\in R_{t-1}$ is of the form  
\begin{equation}\label{eq:double-summation} e=e^{(0)}_{t-1}+\sum_{i=t}^r\sum_{j=1}^{m_i} e_{i,j} \quad\text{with}\quad e_{t-1}^{(0)}\in R_{t-1}^{(0)}\quad\text{and}\quad e_{i,j}\in R_i,
\end{equation} 
with possible repetitions of the summands. 
 Likewise in \eqref{eq:matrix}, due to the chosen enumeration  we obtain for the first $i$ total coordinates of the vector $e_{i,j}\in R_i$,
\begin{equation}\label{eq:tot-coord}
\langle \rho_l,  e_{i,j}\rangle=0\quad \forall l=1,\ldots,i-1\quad\text{and}\quad \langle \rho_i,  e_{i,j}\rangle=-1,\,\,\,j=1,\ldots,m_i.
\end{equation}
Letting $l=t$ yields 
\begin{equation}\label{eq:triangular-matrix}
\langle \rho_t,  e_{i,j}\rangle=\begin{cases} \,\,\,0, &  i=t+1,\ldots,r\\ -1, & i=t\,.\end{cases}
\end{equation}
From \eqref{eq:double-summation}--\eqref{eq:triangular-matrix} we deduce 
\[\langle \rho_l, e\rangle=0\quad \forall l<t-1,\quad \langle \rho_{t-1}, e\rangle=-1,\quad\text{and}\quad
\langle \rho_t, e\rangle=\langle \rho_t, e^{(0)}_{t-1}\rangle-m_t\ge 0. \] 
Therefore, one has 
\[m_t\le\langle \rho_t, e^{(0)}_{t-1}\rangle=:\tilde m_t.\]
To find a uniform bound for the $(t+1)$st total coordinate $\langle \rho_{t+1}, e\rangle$ of $e$ we let 
\[\tilde m_{t+1}=\max_{e^{(0)}_{t-1}\in R^{(0)}_{t-1}}\{\langle \rho_{t+1}, e^{(0)}_{t-1}\rangle\}+\max_{m_t\le \tilde m_t} 
\Big\{\sum_{j=1}^{m_t} \langle \rho_{t+1}, e_{t,j}\rangle\,|\,  (e_{t,1},\ldots,e_{t,m_t})\in R_t^{m_t}\Big\}<+\infty.\] 
Arguing as before we obtain $m_{t+1}\le \tilde m_{t+1}$. Continuing in this way we arrive at the conclusion that all 
the total coordinates of the vectors $e$ from $R_{t-1}$  are uniformly bounded above, and so, $R_{t-1}$ is finite. 
This gives the induction step. 
Thus, $R$ is finite, and the dimension $\dim (L)={\rm card}\,(R)$ is finite too, see \eqref{eq:dim}.

Let us show finally that $L$ is nilpotent. 
Indeed, let $1\le i< j\le r$. Using the relations similar to \eqref{eq:tot-coord} and the fact that $R$ is finite, for $N\gg 1$ one deduces
\begin{equation}\label{eq:negative} \langle \rho_j, e+e_1+\ldots+e_N\rangle=\langle \rho_j, e\rangle-N\le -2\quad\text{whenever}\quad e\in R_i\quad\text{and}\quad e_1,\ldots,e_N\in R_j.
\end{equation}
Letting $l={\bf k} \partial_{\rho_i,e}\subset L_i$ and $l_k={\bf k} \partial_{\rho_j,e_k}\subset L_j$, $k=1,\ldots,N$ and using \eqref{eq:equiv} we obtain by \eqref{eq:negative}
\[
[l_1,[l_2,[\ldots,[l_N,l]\ldots ]=0\quad\text{whenever}\quad l_k\subset L_j, \,\,\,k=1,\ldots,N,\quad\text{and}\quad l\subset L_i,\,\,\,i\le j.\]
 For $N\gg 1$ the latter vanishing reads
\begin{equation}\label{eq:adjoint} 
{\rm ad}(L_j)^{N}(L_i)=0 \quad \forall j\ge i, \quad i,j\in \{1,\ldots,r\}.
\end{equation}
Taking into account \eqref{eq:commutator-relations}, from \eqref{eq:adjoint} we deduce
\[{\rm ad}(L)^{Nr}(L)=0,\] which means that $L$ is nilpotent.
\end{proof}

\subsection{From nilpotent Lie algebras to unipotent groups}\label{ss:laug}

It is well known, see, e.g., \cite[Ch.~XVI, Thm.~4.2]{Hoch:1981}, that over a field of characteristic zero,  
the Lie functor realizes the equivalence between the categories of unipotent algebraic groups 
and of nilpotent Lie algebras. In our particular case,  this correspondence can be made quite explicit. 

\begin{proposition}\label{prop:2-cycle} Let $X$ be a toric affine variety over $\bf k$ with no torus factor, let 
$G=\langle U_1,\ldots,U_s\rangle\subset {\rm Aut}(X)$ be a subgroup generated by the root subgroups $U_i=\exp(t\partial_i)$, where the $\partial_i$ are locally nilpotent derivations of the structure algebra $\mathcal{O}(X)$ associated with Demazure roots, let $L$ be the Lie algebra generated by $\partial_1,\ldots,\partial_s$, and let $\Gamma(L)=\Gamma_r(L)$ be the associated directed graph,  see Definition~\ref{def:graph}. Then the following are equivalent:
\begin{itemize}
\item[(i)]\label{i} $L$ has no $2$-cycle of root derivations; 
\item[(ii)]\label{ii} the graph $\Gamma(L)$ has no oriented cycle, in particular, no bidirected edge;
 \item[(iii)]\label{iii} $L$ is finite-dimensional and nilpotent;
  \item[(iv)]\label{iv} $G$ is a unipotent algebraic group acting regularly on $X$.
  \end{itemize}
   In the latter case one has $L={\rm Lie}\,(G)$.
\end{proposition}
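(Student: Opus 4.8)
The plan is to establish the chain of equivalences by a combination of previously proven results and a direct analysis of the structure of $G$ relative to $L$. First I would note that the equivalence (i) $\Leftrightarrow$ (iii) is essentially Proposition~\ref{prop:ALS} together with its converse: if $L$ has no $2$-cycle, then Proposition~\ref{prop:ALS} gives that $L$ is finite-dimensional and nilpotent; conversely, if $L$ contained a $2$-cycle of root derivations, then by Corollary~\ref{cor:3cases} the subalgebra generated by the two corresponding root derivations would be $\mathrm{sl}_2(\mathbf{k})$ or infinite-dimensional, so $L$ would fail to be finite-dimensional and nilpotent. The equivalence (i) $\Leftrightarrow$ (ii) I would read off directly from Definition~\ref{def:graph} and Lemma~\ref{lem:2-cycle}: a bidirected edge of $\Gamma(L)$ is precisely a $2$-pseudo-cycle, hence a $2$-cycle by Lemma~\ref{lem:ideal}, and more generally $\Gamma(L)$ having an oriented cycle is equivalent to the existence of a pseudo-cycle, which by Lemma~\ref{lem:2-cycle} is equivalent to the existence of a $2$-cycle.

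The substantive implication is (iii) $\Rightarrow$ (iv), and this is where I expect the main work to lie. Assuming $L$ is finite-dimensional and nilpotent, I would invoke the equivalence of categories between nilpotent Lie algebras and unipotent algebraic groups over a field of characteristic zero, cited from \cite[Ch.~XVI, Thm.~4.2]{Hoch:1981}. The idea is that the abstract unipotent algebraic group $\tilde{G}$ with Lie algebra $L$ acts on $X$ via the exponentials of the derivations in $L$, each $\partial\in L$ being locally nilpotent (being a finite combination of root derivations living in a finite-dimensional nilpotent Lie algebra of locally nilpotent derivations). The key point to verify is that the group $G=\langle U_1,\ldots,U_s\rangle$ generated abstractly inside $\mathrm{Aut}(X)$ coincides with the image of this algebraic group action, i.e.\ that $G$ is a closed unipotent algebraic subgroup of $\mathrm{Aut}(X)$ with $\mathrm{Lie}(G)=L$ and the action is regular. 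The cleanest route is to lift everything to the Cox construction: by Lemma~\ref{lem:lifting}(d) the Lie algebra $\hat{L}$ of lifted derivations on $\mathbb{A}^k$ is isomorphic to $L$, hence also finite-dimensional and nilpotent, and the lifted generators $\hat\partial_i$ are elementary (triangular) derivations. I would then show $\hat{G}=\langle\hat{U}_1,\ldots,\hat{U}_s\rangle$ is a unipotent algebraic subgroup of $\mathrm{Tame}(\mathbb{A}^k)$ by exhibiting it as the image under the exponential map of the finite-dimensional nilpotent $\hat{L}$, and then transfer the conclusion to $G$ via Proposition~\ref{prop:pushforward}(b), which yields $G\cong\hat{G}$ and that $G$ is unipotent.

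For the upstairs claim about $\hat{G}$, the plan is to use the explicit upper-triangular structure granted by the enumeration~\eqref{eq:commutator-relations}: after relabelling the vertices of $\Gamma_r$ so that $[L_i,L_j]\subset L_j$ for $i>j$, one sees that $\hat{L}$ embeds into the Lie algebra of triangular vector fields on $\mathbb{A}^k$, and the Baker--Campbell--Hausdorff formula (which terminates since $\hat{L}$ is nilpotent) shows that the product of exponentials $\exp(\hat{L})$ is a group, polynomially parametrized by $\hat{L}\cong\mathbf{k}^{\dim L}$, and closed in $\mathrm{Aut}(\mathbb{A}^k)$; this group acts regularly because each $\exp(t\hat\partial)$ is a polynomial automorphism of $\mathbb{A}^k$. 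Since each generator $\hat{U}_i=\exp(t\hat\partial_i)$ lies in $\exp(\hat{L})$ and the $\hat\partial_i$ generate $\hat{L}$, one gets $\hat{G}=\exp(\hat{L})$, a unipotent algebraic group with $\mathrm{Lie}(\hat{G})=\hat{L}$, whence $\mathrm{Lie}(G)=L$ after descent.

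Finally, the implication (iv) $\Rightarrow$ (iii) I would handle by the general fact that a unipotent algebraic group in characteristic zero has a finite-dimensional nilpotent Lie algebra, and $L=\mathrm{Lie}(G)$ in this case; alternatively (iv) $\Rightarrow$ (i) follows since a $2$-cycle would force $\langle U_e,U_{e'}\rangle$ to contain a free subgroup of rank two by Proposition~\ref{prop:1.3}, contradicting unipotence (a unipotent group is nilpotent, hence contains no non-abelian free subgroup). The main obstacle, as noted, is the careful verification in (iii) $\Rightarrow$ (iv) that the abstractly generated group $\hat{G}$ is genuinely a \emph{closed} algebraic subgroup equal to $\exp(\hat{L})$ rather than merely contained in it; the nilpotency of $\hat{L}$ and the termination of BCH are exactly what make this identification clean, and the Cox lift is what makes the triangular structure explicit enough to carry it out.
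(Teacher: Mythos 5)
Your proposal is correct and follows essentially the same route as the paper: the equivalences (i)$\Leftrightarrow$(ii)$\Leftrightarrow$(iii) via Proposition~\ref{prop:ALS}, Lemma~\ref{lem:ideal} and Lemma~\ref{lem:2-cycle}; (iv)$\Rightarrow$(i) from nilpotence of unipotent groups versus Proposition~\ref{prop:1.3}; and the main implication (iii)$\Rightarrow$(iv) by lifting to the Cox ring, exploiting the triangular structure from~\eqref{eq:commutator-relations} and the truncated BCH formula to identify $\hat G=\exp(\hat L)$ as a unipotent algebraic group, then descending via Proposition~\ref{prop:pushforward}(b). The only ingredient you leave implicit is the uniform degree bound on $\exp(\hat\partial)$ for $\hat\partial\in\hat L$ (the paper's Lemma~\ref{lem:4-03}, proved with the Zassenhaus formula), which is what makes the polynomial parametrization land in a finite-dimensional space so that Zariski's Main Theorem applies.
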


\begin{proof} The implications (i) $\Rightarrow$ (ii)$\&$(iii) follow from Proposition~\ref{prop:ALS}. Condition (iii) implies (iv) of Lemma~\ref{lem:ideal}, and so, by virtue of this lemma, implies (i). Therefore, there is the equivalence (i) $\Leftrightarrow$ (iii). By Definition~\ref{def:graph}, $\Gamma(L)$ has no bidirected edge if and only if $L$ has no 2-pseudo-cycle of root derivations. By virtue of Lemma~\ref{lem:ideal} this is equivalent to (i). Thus, one has (i) $\Leftrightarrow$ (ii) $\Leftrightarrow$ (iii).

 If (iv) holds, then $G$ is a nilpotent group, and so, it contains no non-abelian free subgroup. This implies (i) due to Proposition~\ref{prop:1.3} and Corollary~\ref{cor:3cases}. Hence, we have the implications (iv) $\Rightarrow$ (i) $\Leftrightarrow$ (iii).  The converse implication  (iii) $\Rightarrow$ (iv) is proven in Lemmas~\ref{lem:4-01}--\ref{lem:4} below. 
 \end{proof}

\noindent {\bf Convention.}
We assume in the sequel that the Lie algebra $L$ is  finite-dimensional and nilpotent, and so, (i)--(iii) hold. We use the enumeration of the subalgebras $L_i\subset L$, $i=1,\ldots,r$ introduced in the proof of Proposition~\ref{prop:ALS}, so that \eqref{eq:commutator-relations} and \eqref{eq:adjoint} hold. 

\smallskip

Let us recollect the notation. Recall that  $k$ stands for the number of total coordinates on our toric affine variety $X$; this is at the same time the number of extremal rays 
$\rho_i$ of the cone 
$\Delta_{\mathbb Q}$, see subsections~\ref{sit:1.1} and~\ref{sit:1.3}.
 Due to~\eqref{eq:AG}, any automorphism of $X$ lifts to an automorphism of the Cox ring $\mathcal{O}(\mathbb{A}^k)={\bf k}[x_1,\ldots,x_k]$ of 
 $X$, and to an automorphism of the spectrum 
$\mathbb{A}^k$ of the Cox ring.   The algebra $\mathcal{O}(X)$ coincides with the algebra of the $F_{\rm Cox}$-invariants 
of the Cox ring $\mathcal{O}(\mathbb{A}^k)$. By Lemma~\ref{lem:lifting}.b any root derivation $\partial_i$, $i=1,\ldots,s$, lifts to a root derivation 
$\hat\partial_i$ of ${\bf k}[x_1,\ldots,x_k]$ of form~\eqref{eq:elementary1}, and any root subgroup $U_i=\exp(t\partial_i)$ 
lifts to a root subgroup 
$\hat U_i=\exp(t\hat \partial_i)$ consisting of elementary 
transformations~\eqref{eq:elementary2} and
centralized by the Cox quasitorus $F_{\rm Cox}$. The derivation $\partial_i$ is the restriction of $\hat\partial_i$ to $\mathcal{O}(\mathbb{A}^k)^{F_{\rm Cox}}\cong\mathcal{O}(X)$, $i=1,\ldots,s$. 
This yields an isomorphism 
$L\cong \hat L$, where $\hat L$ is the Lie algebra  
generated by $\hat\partial_1,\ldots,\hat\partial_s$, see Lemma~\ref{lem:lifting}.d. 

Assuming that  $\hat G=\langle \hat U_1,\ldots,\hat U_s\rangle$ is a unipotent linear algebraic group, one has an isomorphism $G\cong\hat G$, and so, $G$ is a unipotent linear algebraic group too, see Proposition~\ref{prop:pushforward}.b. 
Thus, it suffices to show that $\hat G$ 
is a unipotent linear algebraic group provided $\hat L$ is a finite-dimensional nilpotent Lie algebra. 

Let $\hat L_i$ be the image of $L_i$ under the isomorphism $L\cong\hat L$ from Lemma~\ref{lem:lifting}.d. 
It is easily seen that $L$ satisfies (i)--(iii) if and only if $\hat L$ does. 
 
Recall that the automorphisms of $\mathbb{A}^k$ of the form 
\begin{equation}\label{eq:triangular0} (x_1,\ldots,x_k)\mapsto 
(x_1+f_1,\ldots,x_k+f_k),\,\,\text{where}\,\, f_i\in  {\bf k}[x_{i+1},\ldots,x_k],\,\,\, i=1,\ldots,k,
\end{equation} 
are called \emph{unitriangular}. These automorphisms form the \emph{unitriangular subgroup} of the group ${\rm Aut}(\mathbb{A}^k)$, 
see~\cite[Ch.~3]{Fre:2017}. In Lemmas~\ref{lem:4-01} and~\ref{lem:4-02} we present $\hat G=\langle \hat U_1,\ldots,\hat U_s\rangle$ as a subgroup of the unitriangular group with ${\rm Lie}(\hat G)=\hat L$. 

\begin{lemma}\label{lem:4-01}
Any  $\partial\in L$ is a locally nilpotent derivation of $\mathcal{O}(X)$.
\end{lemma}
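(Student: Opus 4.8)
The plan is to transfer the statement to the Cox realization on $\mathbb{A}^k$, where the relevant derivations take the explicit elementary form $M_j\,\partial/\partial x_j$ of \eqref{eq:elementary1}, and to show that, with respect to the enumeration fixed in the Convention, \emph{every} element of $\hat L$ is a triangular derivation in the sense of \eqref{eq:triangular0}. Triangular derivations are locally nilpotent, and local nilpotence passes to the invariant subalgebra $\mathcal{O}(\mathbb{A}^k)^{F_{\mathrm{Cox}}}\cong\mathcal{O}(X)$.

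First I would reduce to $\hat L$. By Lemma~\ref{lem:lifting}.d the assignment $\partial_i\mapsto\hat\partial_i$ extends to a Lie algebra isomorphism $L\cong\hat L$ induced by the pullback $\pi^*$, and each $\partial\in L$ is the restriction to $\mathcal{O}(\mathbb{A}^k)^{F_{\mathrm{Cox}}}$ of the corresponding $\hat\partial\in\hat L$. Since the restriction of a locally nilpotent derivation to a stable subalgebra is again locally nilpotent (and $\pi^*$ is injective), it suffices to prove that every $\hat\partial\in\hat L$ is locally nilpotent on $\mathcal{O}(\mathbb{A}^k)={\bf k}[x_1,\ldots,x_k]$.

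Next I would make the triangularity explicit. I order the total coordinates so that $x_1,\ldots,x_r$ correspond to the ray generators $\rho_1,\ldots,\rho_r$ in the enumeration fixed after Proposition~\ref{prop:ALS}, and $x_{r+1},\ldots,x_k$ to the remaining ones. For a root $e\in R_i$, relation~\eqref{eq:tot-coord} gives $\langle\rho_l,e\rangle=0$ for all $l<i$, while $\langle\rho_l,e\rangle\ge 0$ for every $l\neq i$ because $e$ lies in the Demazure facet $S_i$. Hence by \eqref{eq:elementary1} the lifted generator is $\hat\partial_{\varepsilon_i,\hat e}=M\,\partial/\partial x_i$ with $M=\prod_{l\neq i}x_l^{\langle\rho_l,e\rangle}\in{\bf k}[x_{i+1},\ldots,x_k]$. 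Summing over the summands $\hat L_i$ of $\hat L=\bigoplus_{i=1}^r\hat L_i$, an arbitrary $\hat\partial\in\hat L$ has the form $\hat\partial=\sum_{i=1}^r f_i\,\partial/\partial x_i$ with $f_i\in{\bf k}[x_{i+1},\ldots,x_k]$; that is, $\hat\partial$ is triangular. For such a derivation one checks by descending induction that $\hat\partial$ annihilates $x_{r+1},\ldots,x_k$ and maps each $x_i$ ($i\le r$) into ${\bf k}[x_{i+1},\ldots,x_k]$, whence $\hat\partial$ is locally nilpotent; see~\cite[Ch.~3]{Fre:2017}. This proves the claim for $\hat\partial$, and by the reduction above for $\partial$.

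The main obstacle to anticipate is that local nilpotence is not preserved under sums: although each individual root derivation $\partial_{\rho_i,e}$ is evidently locally nilpotent, since it lowers the nonnegative degree $\langle\rho_i,\cdot\rangle$ by one, a general element of $L$ mixes several $\rho_i$-directions and need not be locally nilpotent a priori. The crux is that the enumeration produced in the proof of Proposition~\ref{prop:ALS} — which encodes the acyclicity of $\Gamma_r$ — forces the vanishing $\langle\rho_l,e\rangle=0$ for $l<i$, and this is precisely what puts \emph{all} elements of $\hat L$ simultaneously into a common triangular form, after which local nilpotence is automatic.
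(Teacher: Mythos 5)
Your proposal is correct and follows essentially the same route as the paper: lift to the Cox ring via Lemma~\ref{lem:lifting}.d, use the enumeration from Proposition~\ref{prop:ALS} (via \eqref{eq:commutator-relations}/\eqref{eq:tot-coord}) to see that every element of $\hat L$ is a triangular derivation $\sum_i p_i\,\partial/\partial x_i$ with $p_i\in{\bf k}[x_{i+1},\ldots,x_k]$, invoke local nilpotence of triangular derivations (\cite[Ch.~3]{Fre:2017}), and restrict to $\mathcal{O}(\mathbb{A}^k)^{F_{\rm Cox}}\cong\mathcal{O}(X)$. You also correctly identify the crux — that local nilpotence of the generators does not pass to sums without the simultaneous triangular form supplied by the acyclic enumeration.
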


\begin{proof}
Due to \eqref{eq:elementary1},  in the total 
coordinates $(x_1,\ldots,x_k)$ 
any derivation $\hat\partial\in \hat{L}_i$, $i\in\{1,\ldots,r\}$, acts  on $\mathbb{A}^k$ via
\begin{equation}\label{eq:deriv} \hat\partial=p\partial/\partial x_i\quad\text{where}\quad 
p\in{\bf k}[x_1,\ldots,x_{i-1},x_{i+1},\ldots,x_k].
\end{equation}
 The monomials $M_j$ in \eqref{eq:elementary1}, taken up to proportionality, of all possible polynomials $p$ in \eqref{eq:deriv} 
are in one-to-one correspondence with the roots in the subset $R_j$ of $R$. Since $R_j$ is finite, 
it follows that 
\begin{equation}\label{eq:bound} 
\max_{\hat\partial\in \hat L_j}\,\{\deg(p)\}\le d_j\quad\text{for some}\quad d_j\in {\mathbb N}.
\end{equation}
Due to our choice of enumeration, the subalgebras $\hat L_i\subset \hat L$ satisfy \eqref{eq:commutator-relations}. Hence,
the total coordinates of the roots in $R_i$ 
form a triangular-like matrix, cf.~\eqref{eq:tot-coord}, that is, $p\in {\bf k}[x_{i+1},\ldots,x_k]$ for any $p$
 in \eqref{eq:deriv}.
Since $\hat\partial^2(x_i)=0$, $i=1,\ldots,k$, the $\mathbb{G}_a$-subgroup $\exp(t\hat\partial)$ of 
$\mathop{\rm Aut} (\mathbb{A}^k)$ generated by $\hat\partial$ from \eqref{eq:deriv} acts on $\mathbb{A}^k$ 
via the unitriangular (elementary) transformations
\begin{equation}\label{eq:elementary3} \exp(t\hat\partial)\colon (x_1,\ldots,x_k)\mapsto 
(x_1,\ldots,x_{i-1},x_i+tp(x_{i+1},\ldots,x_k),x_{i+1},\ldots,x_k),\,\,\,t\in{\bf k},\end{equation}
cf.~\eqref{eq:elementary2}.
More generally,
any derivation $\hat\partial\in \hat L$ is triangular of the form
\begin{equation}\label{eq:triangular-derivation} \hat\partial= \sum_{i=1}^r\delta_i=
\sum_{i=1}^r p_i\partial/\partial x_i,\quad\text{where}\quad \delta_i\in \hat L_i\quad\text{and}\quad p_i\in {\bf k}[x_{i+1},\ldots,x_k],\,\,i=1,\ldots,r.\end{equation}
According to \cite[Prop.~3.29]{Fre:2017}, $\hat\partial$ is locally nilpotent on $\mathcal{O}(\mathbb{A}^k)$, and then also $\partial$ 
is locally nilpotent on $\mathcal{O}(X)=\mathcal{O}(\mathbb{A}^k)^{F_{\rm Cox}}$. 
\end{proof}

Notice that for $r< k$ the variables $x_{r+1},\ldots,x_k$ belong to the kernel of any derivation $\hat\partial\in \hat L$. 

\smallskip

\noindent {\bf Classical formulas.} In what follows we use the classical Baker-Campbell-Hausdorff (BCH) and Zassenhaus formulas. 
Let us recall these formulas following \cite{BF:2012, LSS:2019, Man:2012, WGJ:2019}. 
Let $A$ be an associative algebra with unit over $\bf k$. In the formal power series algebra $A[[t]]$ the function $\exp$ is 
well defined for any series without constant term, and $\log$ is well defined for any series with the constant term equal $1$. 
For $a,b\in A$ consider the 
Lie subalgebra ${\rm Lie}(a,b)$ of $A$, 
that is, the smallest subspace of $A$ which contains $a$ and $b$ and is stable under commutators. The BCH formula
expresses $c(t):=\log(\exp(ta)\exp(tb)) \in {\rm Lie}(a,b)[[t]]$. Plugging in formally $t=1$, the formula reads:
\begin{equation}\label{eq:BCH} c:=c(1)= a+b+\frac{1}{2}[a,b]+\frac{1}{12}[a,[a,b]]+\frac{1}{12}[b,[b,a]]+\ldots,\end{equation}
where each term is a rational multiple of the Lie monomial in $a$ and $b$  of degree $m$ obtained from the universal 
Lie monomial $\alpha [z_1,[z_2,[\ldots,z_m]\ldots]$, $\alpha\in\mathbb{Q}$, after substitution of either $a$ or $b$ instead of every
$z_i$.

 Likewise, for given $a_1,\ldots, a_\nu\in A$,  the multivariate Zassenhaus formula reads:
\begin{equation}\label{eq:Zass} 
\exp(a_1+\ldots+a_\nu) =\exp(a_1)\cdots\exp(a_\nu)\prod_{m=2}^{\infty} \exp(\psi_m(a_1,\ldots,a_\nu)),
\end{equation} 
where $\psi_m(a_1,\ldots,a_\nu)$  is a homogeneous Lie polynomial in $a_1,\ldots,a_\nu$ 
of degree $m$. 

In our setup, it occurs that  the both formulas contain just a finite number of terms, and, respectively, factors.
 Indeed, 
letting $L= {\rm Lie}(a,b)$ in the former case and $L= {\rm Lie}(a_1,\ldots,a_\nu)$ in the latter case,
 suppose that $L$ is nilpotent of nilpotency class $n$, 
 that is, $L^{n+1}={\rm ad}(L)^n(L)=0$, where $n\in\mathbb{N}$ is minimal with this property. 
 Then the homogeneous Lie polynomials of degree $m>n$ in the both formulas vanish, hence the sum in~\eqref{eq:BCH} 
 and the product in~\eqref{eq:Zass} are finite.  
\smallskip

\begin{lemma}\label{lem:4-02} In the total coordinates, the automorphisms
$\exp(\hat L)=\{\exp(\hat\partial)\,|\,\hat\partial\in \hat L\}$ form a group 
of unitriangular automorphisms of~$\mathbb{A}^k$.
The map $\exp:\hat L\to \exp(\hat L)$ is well defined and bijective.
\end{lemma}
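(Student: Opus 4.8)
The plan is to prove Lemma~\ref{lem:4-02} by exploiting the nilpotency of $\hat L$ together with the triangular normal form~\eqref{eq:triangular-derivation} for its elements, and to show that $\exp$ carries the vector-space structure of $\hat L$ faithfully onto a group of unitriangular automorphisms. First I would verify that $\exp$ is \emph{well defined} on $\hat L$: by Lemma~\ref{lem:4-01} every $\hat\partial\in\hat L$ is a locally nilpotent derivation, so $\exp(\hat\partial)=\sum_{m\ge 0}\hat\partial^{\,m}/m!$ applied to each coordinate $x_1,\ldots,x_k$ is a \emph{finite} sum, hence a polynomial automorphism of $\mathbb{A}^k$. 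Moreover, since~\eqref{eq:triangular-derivation} shows that $\hat\partial(x_i)\in{\bf k}[x_{i+1},\ldots,x_k]$, the resulting automorphism is unitriangular in the sense of~\eqref{eq:triangular0}; indeed $\hat\partial$ strictly lowers the obvious filtration by the last variables, so its exponential has the form~\eqref{eq:triangular0}.

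Next I would show that $\exp(\hat L)$ is \emph{closed under multiplication}, i.e.\ a group. This is exactly where the BCH formula~\eqref{eq:BCH} enters: for $\hat\partial_1,\hat\partial_2\in\hat L$ one has
\[
\exp(\hat\partial_1)\exp(\hat\partial_2)=\exp\bigl(c(\hat\partial_1,\hat\partial_2)\bigr),
\]
where $c(\hat\partial_1,\hat\partial_2)\in{\rm Lie}(\hat\partial_1,\hat\partial_2)[[t]]$ is the BCH series. Because $\hat L$ is finite-dimensional and nilpotent (by hypothesis, via Proposition~\ref{prop:ALS}) of some class $n$, all Lie monomials of degree $>n$ vanish, so the BCH series terminates and $c(\hat\partial_1,\hat\partial_2)$ is a genuine element of $\hat L$. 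Thus the product of two exponentials is again an exponential of an element of $\hat L$, and the identity $\exp(0)=\mathrm{id}$ together with $\exp(\hat\partial)^{-1}=\exp(-\hat\partial)$ gives the inverses; hence $\exp(\hat L)$ is a group. I must also confirm that these group operations take place inside the polynomial automorphism group and not merely in a formal completion — but the locally nilpotent/unitriangular structure guarantees all the relevant power series are polynomials, so the BCH identity holds as an identity of actual automorphisms.

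Finally I would establish that $\exp:\hat L\to\exp(\hat L)$ is \emph{bijective}. Surjectivity is immediate from the definition of the target. For injectivity, suppose $\exp(\hat\partial)=\mathrm{id}$; applying this to each coordinate $x_i$ and reading off the degree-one term of the locally nilpotent action recovers $\hat\partial(x_i)=0$ for all $i$, whence $\hat\partial=0$. More carefully, I would argue inductively down the triangular structure~\eqref{eq:triangular-derivation}: the coefficient $p_r$ of $\partial/\partial x_r$ is read directly from the $x_r$-component of $\exp(\hat\partial)-\mathrm{id}$ (since $p_r\in{\bf k}$), forcing $p_r=0$; then $p_{r-1}\in{\bf k}[x_r]$ is recovered from the $x_{r-1}$-component, and so on, yielding $\hat\partial=0$. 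The construction of a two-sided inverse is provided by $\log$, well defined on unitriangular automorphisms by the same finiteness, giving bijectivity cleanly.

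The main obstacle I anticipate is the careful bookkeeping needed to confirm that the BCH series genuinely terminates \emph{and} that its sum lands back in $\hat L$ rather than in some larger Lie algebra: this rests on $\hat L$ being closed under the bracket (true by construction, as $\hat L$ is a Lie algebra) together with the nilpotency bound from~\eqref{eq:adjoint}, which caps the degree of surviving Lie monomials. Once this finiteness is pinned down, the unitriangular normal form~\eqref{eq:triangular0} does the rest of the work, and the group and bijectivity claims follow with only routine verification.
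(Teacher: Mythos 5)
Your proposal is correct and follows essentially the same route as the paper: unitriangularity from the triangular normal form~\eqref{eq:triangular-derivation}, closure under composition via the BCH series truncated by the nilpotency of $\hat L$, and bijectivity from the fact that $\log$ is a two-sided inverse of $\exp$ on unitriangular automorphisms (the paper cites \cite[Prop.~3.30, Cor.~3.31]{Fre:2017} for the $\log$ of a unitriangular automorphism being a triangular derivation). Your extra inductive coefficient-reading argument for injectivity is a valid, slightly more hands-on substitute for the $\log$ argument, but the substance is the same.
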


\begin{proof} By virtue of~\eqref{eq:elementary3} and~\eqref{eq:triangular-derivation}, any automorphism 
$\exp(\hat\partial)\in\exp(\hat L)\subset {\rm Aut}(\mathbb{A}^k)$ is unitriangular of the form
\begin{equation}\label{eq:triangular1} \exp(\hat\partial)\colon 
(x_1,\ldots,x_k)\mapsto (x_1+f_1,\ldots,x_r+f_r,x_{r+1},\ldots,x_k)\,\,\text{with}\,\, f_i\in  {\bf k}[x_{i+1},\ldots,x_k].
\end{equation} 
Any unitriangular  automorphism $\alpha\in\mathop{\rm Aut} (\mathbb{A}^k)$ can be written 
as the exponential $\alpha=\exp(c)$ of the triangular  derivation
\[c=\log(\alpha)=\log({\rm id}+( \alpha - {\rm id} ))\in\mathop{\rm Der}(\mathcal{O}(\mathbb{A}^k)),
\] 
 see \cite[Prop.~3.30]{Fre:2017} and its proof. 
Consider a pair $(a,b)$ of triangular derivations of $\mathcal{O}(\mathbb{A}^k)$ from $\hat L$. 
The product $\exp(a)\exp(b)$ of the corresponding unitriangular automorphisms is again unitriangular. In more detail,  $\exp(a)\exp(b)=\exp(c)$ with a triangular derivation 
\[c=\log(\exp(a)\exp(b)) \in\mathop{\rm Der}(\mathcal{O}(\mathbb{A}^k)),\] see \cite[Cor.~3.31]{Fre:2017}. The latter derivation 
can be expressed via the
BCH formula~\eqref{eq:BCH}  truncated on level $n+1$, where $n$ is the nilpotency class of $\hat L$. It follows that $c\in \hat L$. 
Thus, $\exp(\hat L)$ is a subgroup of the group of unitriangular automorphisms of $\mathbb{A}^k$. Since $\log$ and $\exp$ are
 mutually inverse, the map $\exp:\hat L\to \exp(\hat L)$ is a bijection.  
 \end{proof}

\begin{lemma}\label{lem:4-03} The degrees of $\exp(\hat \partial)$ are uniformly bounded for $\hat \partial \in \hat L$. 
\end{lemma}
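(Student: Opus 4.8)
The plan is to bound the degree $\deg(\exp(\hat\partial)):=\max_{1\le i\le k}\deg\big(\exp(\hat\partial)(x_i)\big)$ uniformly over $\hat\partial\in\hat L$, exploiting the triangular form~\eqref{eq:triangular-derivation} together with the finiteness of $R$. First I would fix a uniform bound on the coefficients: writing $\hat\partial=\sum_{i=1}^r p_i\,\partial/\partial x_i$ with $p_i\in{\bf k}[x_{i+1},\ldots,x_k]$ as in~\eqref{eq:triangular-derivation}, the component $p_i$ is a linear combination of the finitely many monomials attached to the roots of the finite set $R_i$, so $\deg(p_i)\le d_i$ with $d_i$ as in~\eqref{eq:bound}; set $d=\max_i d_i$. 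I would also record two elementary consequences of the triangular shape: since each $p_l\in{\bf k}[x_{l+1},\ldots,x_k]$, the subalgebra ${\bf k}[x_{i+1},\ldots,x_k]$ is $\hat\partial$-stable, and for any $g$ in it one has $\hat\partial(g)=\sum_{i<l\le r}p_l\,\partial g/\partial x_l$, so that each application of $\hat\partial$ raises the degree by at most $d-1$.

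The crux is to bound, uniformly over $\hat\partial\in\hat L$, the local nilpotency index $\nu_i$ of $\hat\partial$ on the variable $x_i$, i.e.\ the least $\nu$ with $\hat\partial^{\nu+1}(x_i)=0$. I would establish the recursion $\nu_i\le 1+d\cdot\max_{j>i}\nu_j$ and unwind it by downward induction on $i$. For $j>r$ one has $\hat\partial(x_j)=0$, hence $\nu_j=0$; and from $\hat\partial^{m}(x_i)=\hat\partial^{m-1}(p_i)$ one gets $\nu_i=1+\nu(p_i)$, where $\nu(p_i)$ denotes the local nilpotency index of $\hat\partial$ on $p_i$. Using the submultiplicativity $\nu(gh)\le\nu(g)+\nu(h)$ (immediate from the Leibniz rule, since in $\hat\partial^{\nu(g)+\nu(h)+1}(gh)$ every binomial term vanishes), together with $\nu(\sum\,\cdot\,)\le\max\nu(\,\cdot\,)$ and $\deg(p_i)\le d$, one obtains $\nu(p_i)\le d\cdot\max_{j>i}\nu_j$, whence the recursion. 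It unwinds to a bound $\nu_i\le\bar\nu_i$ depending only on $d$ and $r$ (of order $d^{\,r-i+1}$), which is uniform in $\hat\partial$.

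Finally I would assemble the estimate. Since $\hat\partial$ is locally nilpotent on $x_i$, Lemma~\ref{lem:4-02} gives $\exp(\hat\partial)(x_i)=x_i+f_i$ with $f_i=\sum_{m=1}^{\nu_i}\tfrac1{m!}\hat\partial^{m}(x_i)$, a finite sum of at most $\bar\nu_i$ terms. By the per-application degree estimate above, $\deg(\hat\partial^{m}(x_i))\le 1+m(d-1)\le 1+\bar\nu_i(d-1)$ for $1\le m\le\nu_i$, so $\deg\big(\exp(\hat\partial)(x_i)\big)\le 1+\bar\nu_i(d-1)$, independently of $\hat\partial$. Taking the maximum over $i=1,\ldots,k$ yields the required uniform bound on $\deg(\exp(\hat\partial))$.

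I expect the only genuine obstacle to be the uniform control of the local nilpotency indices $\nu_i$: a priori these could grow with the size of the coefficients of $\hat\partial$, and it is precisely the triangular normal form~\eqref{eq:triangular-derivation} (forced by the enumeration underlying~\eqref{eq:commutator-relations}) together with the finiteness of $R$ --- which supplies the uniform coefficient-degree bound $d$ --- that collapses the problem into the clean recursion $\nu_i\le 1+d\cdot\max_{j>i}\nu_j$. Once this is in place, the passage to degrees is routine.
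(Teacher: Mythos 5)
Your proof is correct, and it takes a genuinely different route from the paper's. The paper factors $\exp(\hat\partial)$ for $\hat\partial=a_1+\cdots+a_\nu\in\mathcal{L}_\nu=\hat L_1\oplus\cdots\oplus\hat L_\nu$ using the multivariate Zassenhaus formula (truncated thanks to nilpotency of $\hat L$): since the correction terms $\psi_m$ lie in the ideal $\mathcal{L}_{\nu-1}$ by \eqref{eq:commutator-relations}, one gets $\exp(\hat\partial)=g_1\exp(a_\nu)g_2$ with $g_1,g_2\in\exp(\mathcal{L}_{\nu-1})$, and the uniform degree bound follows by induction on $\nu$ because composition of polynomial maps of bounded degree has bounded degree. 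You instead estimate the exponential series directly, which requires no Zassenhaus machinery: the two inputs are the same as in the paper --- the uniform coefficient bound $d$ coming from the finiteness of $R$ via \eqref{eq:bound}, and the triangular form \eqref{eq:triangular-derivation} --- but you convert them into (a) a per-application degree increment of at most $d-1$, and (b) a uniform bound on the local nilpotency indices via the recursion $\nu_i\le 1+d\cdot\max_{j>i}\nu_j$, whose justification through $\nu_i=1+\nu(p_i)$, submultiplicativity of $\nu$ under products, and the base case $\nu_j=0$ for $j>r$ is sound. Your approach is more elementary and yields explicit (if large, of order $d^{\,r}$) bounds; the paper's is shorter given that the Zassenhaus formula and the ideal filtration are already set up for Lemma~\ref{lem:4}. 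One cosmetic point: the expansion $\exp(\hat\partial)(x_i)=x_i+\sum_{m=1}^{\nu_i}\frac{1}{m!}\hat\partial^m(x_i)$ is just the definition of the exponential of the locally nilpotent derivation guaranteed by Lemma~\ref{lem:4-01}; Lemma~\ref{lem:4-02} is not really what you need to cite there.
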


\begin{proof}
Write $\hat \partial=a_1+\ldots+a_\nu\in \hat L$, where $a_i\in \hat L_i$. We can express $\exp(\hat \partial)=\exp(a_1+\ldots+a_\nu)$ 
via the  Zassenhaus formula
\eqref{eq:Zass}, where the product is truncated  on level $n+1$ with $n$ being the nilpotency class of $\hat L$. 

Consider the increasing chain of ideals of $\hat L$,
\[\mathcal{L}_1\subset\mathcal{L}_2\subset\ldots\subset\mathcal{L}_r=\hat L,
\quad\text{where}\quad \mathcal{L}_{\nu}:=\hat L_1\oplus\ldots\oplus \hat L_\nu,\]
see \eqref{eq:commutator-relations}. Notice that $\hat \partial=a_1+\ldots+a_\nu\in \mathcal{L}_{\nu}$.
Let us show that in \eqref{eq:Zass} one has $\psi_m(a_1,\ldots,a_\nu)\in \mathcal{L}_{\nu-1}$ for any $\nu\in\{2,\ldots,r\}$, $m\ge 2$. 
Indeed, it suffices to check this for $\psi_m$ which is a
Lie monomial in $a_1,\ldots,a_\nu$ of degree $m$. In this case, our claim follows from the fact that the abelian 
Lie algebras $\hat L_1,\ldots,\hat L_r$ verify \eqref{eq:commutator-relations}. 

Now we proceed by induction on $\nu=1,\ldots,r$. The assertion of the lemma holds for $\mathcal{L}_1$ 
due to~\eqref{eq:bound} and~\eqref{eq:elementary3}. Suppose it holds for some $\mathcal{L}_{\nu-1}$. Take \[\hat \partial=\sum_{i=1}^\nu a_i=
\sum_{i=1}^\nu p_i\partial/\partial x_i\in\mathcal{L}_\nu,\quad\text{where}\quad a_i\in \hat L_i.\] Using \eqref{eq:Zass}  
and the preceding observation, we can write
\begin{equation}\label{eq:decomposition} \exp(\hat \partial)=g_1\exp(a_\nu)g_2=
g_1\exp(p_\nu\partial/\partial x_\nu)g_2,\quad\text{where}\quad g_1,\,g_2\in \exp(\mathcal{L}_{\nu-1}).
\end{equation} 
Since our assertion holds for $\exp(\mathcal{L}_{\nu-1})$ by the induction hypothesis and for $\exp(\hat L_\nu)$ by 
\eqref{eq:bound}--\eqref{eq:elementary3}, the degrees of all the automorphisms in \eqref{eq:decomposition} 
are uniformly bounded above. Therefore, the assertion holds for $\exp(\mathcal{L}_{\nu})$ as well. This concludes the induction.  
\end{proof}

\begin{lemma}\label{lem:4}
Assume $L$ is a finite dimensional nilpotent Lie algebra. Then $G$ is a unipotent algebraic group acting regularly on $X$, and $L={\rm Lie(G)}$.
\end{lemma}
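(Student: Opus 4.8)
The plan is to assemble Lemma~\ref{lem:4} directly from the three preceding lemmas together with Proposition~\ref{prop:pushforward}.b. By the reduction already established at the start of subsection~\ref{ss:laug}, it suffices to prove that $\hat G=\langle \hat U_1,\ldots,\hat U_s\rangle$ is a unipotent linear algebraic group with $\mathrm{Lie}(\hat G)=\hat L$, since then $G\cong\hat G$ by Proposition~\ref{prop:pushforward}.b and we transport the conclusion back to $X$ via the isomorphism $L\cong\hat L$ of Lemma~\ref{lem:lifting}.d. So the whole argument takes place on $\mathbb{A}^k$ in the total coordinates.

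The key step is to identify $\hat G$ with the image $\exp(\hat L)$ of the exponential map. First I would observe that each generating root subgroup $\hat U_i=\exp(t\hat\partial_i)$ is contained in $\exp(\hat L)$, since $t\hat\partial_i\in\hat L$ for all $t\in\mathbf{k}$; hence $\hat G\subseteq\langle\exp(\hat L)\rangle$. By Lemma~\ref{lem:4-02}, $\exp(\hat L)$ is already a group (closed under products, via the truncated BCH formula~\eqref{eq:BCH}, and under inverses), so in fact $\hat G\subseteq\exp(\hat L)$. For the reverse inclusion, I would use that $\hat L$ is generated as a Lie algebra by $\hat\partial_1,\ldots,\hat\partial_s$: every element of $\hat L$ is a $\mathbf{k}$-linear combination of iterated brackets of the generators, and iterated brackets can be recovered inside $\hat G$ through the group commutators of the corresponding one-parameter subgroups (the standard fact that $\exp(t[a,b])$ is a limit/polynomial expression in the commutators $\exp(ta)\exp(sb)\exp(-ta)\exp(-sb)$, which stays in $\hat G$ because $\hat G$ is a group containing all the $\exp(t\hat\partial_i)$). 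Combined with the Zassenhaus formula~\eqref{eq:Zass}, this shows every $\exp(\hat\partial)$ with $\hat\partial\in\hat L$ lies in $\hat G$, giving $\hat G=\exp(\hat L)$.

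Once the identification $\hat G=\exp(\hat L)$ is in place, the remaining assertions follow quickly. By Lemma~\ref{lem:4-02}, $\hat G$ consists of unitriangular automorphisms of $\mathbb{A}^k$ of the shape~\eqref{eq:triangular1}, and by Lemma~\ref{lem:4-03} their degrees are uniformly bounded; therefore $\hat G$ is contained in a finite-dimensional affine algebraic family of unitriangular automorphisms, and the bijection $\exp\colon\hat L\to\exp(\hat L)$ of Lemma~\ref{lem:4-02} endows $\hat G$ with the structure of an algebraic variety isomorphic to the affine space $\hat L\cong\mathbf{k}^{\dim\hat L}$ with polynomial (hence regular) group operations coming from the truncated BCH formula. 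Thus $\hat G$ is a connected unipotent algebraic group (all its elements are unipotent automorphisms, being unitriangular) with $\mathrm{Lie}(\hat G)=\hat L$. Transporting back, $G$ is a unipotent algebraic group acting regularly on $X$ and $L=\mathrm{Lie}(G)$.

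The main obstacle I anticipate is making the algebraic-group structure and the regularity of the action fully rigorous, rather than just set-theoretic: one must verify that the parametrization $\hat L\to\hat G$, $\hat\partial\mapsto\exp(\hat\partial)$, is a morphism of algebraic varieties with algebraic multiplication, which is exactly where the finiteness built into the truncated BCH and Zassenhaus formulas (valid because $\hat L$ is nilpotent of some class $n$) and the uniform degree bound of Lemma~\ref{lem:4-03} are indispensable. The reverse inclusion $\exp(\hat L)\subseteq\hat G$, i.e.\ that Lie-theoretic brackets are realized by genuine group commutators inside $\hat G$, is the other delicate point; it is routine in the nilpotent setting but deserves explicit mention. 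Everything else is bookkeeping with the already-established Lemmas~\ref{lem:4-01}--\ref{lem:4-03}.
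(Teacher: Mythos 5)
Your proposal is correct and follows essentially the same route as the paper: both identify $\hat G$ with $\exp(\hat L)$, use the truncated BCH/Zassenhaus formulas and the uniform degree bound of Lemma~\ref{lem:4-03} to put an algebraic structure on $\exp(\hat L)$, observe that unitriangular automorphisms are unipotent, and descend to $X$ via Proposition~\ref{prop:pushforward}.b. The only cosmetic difference is that the paper realizes $\exp(\hat L)$ as a locally closed smooth subvariety of the finite-dimensional span $F\subset{\rm End}(\mathbb{A}^k)$ and invokes Zariski's Main Theorem to make $\exp$ an isomorphism, whereas you transport the affine-space structure of $\hat L$ directly along the bijection; both are legitimate ways of packaging the same finiteness input.
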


\begin{proof}
Due to Lemma~\ref{lem:4-03}, the span $F$ of $\exp(\hat L)$ is a finite-dimensional subspace of the vector space 
${\rm End}(\mathbb{A}^k)$. 
One can take for the coordinates in $\hat L$ and $F$ the coefficients of the polynomials  $p_1,\ldots,p_r$ and $f_1,\ldots,f_r$  in
\eqref{eq:triangular-derivation} and \eqref{eq:triangular1}, respectively.  The map 
\[\hat L\ni \hat \partial\mapsto \exp(\hat \partial)\in \exp(\hat L), \qquad (p_1,\ldots,p_r)\mapsto (f_1,\ldots,f_r),\] 
defines a morphism of algebraic varieties $\hat L\to F$. The image $\exp(\hat L)$ is an irreducible constructible subset of $F$. 
Since $\exp(\hat L)$ is a connected group, this 
 is a locally closed smooth subvariety of $F$. 
By Zariski's Main Theorem, the bijective morphism $\exp: \hat L\to\exp(\hat L)$ of smooth varieties is an isomorphism. 
Since $\hat L$ is an affine variety, $\exp(\hat L)$ is too. 
Using~\eqref{eq:triangular0}, it is easily seen that $\exp(\hat L)$ is an affine algebraic group which acts regularly on $\mathbb{A}^k$. 
The exponential of a nilpotent matrix is unipotent. Therefore, the group $\exp(\hat L)$ is unipotent since it consists of unipotent elements. 
 
Recall 
that $\hat G=\langle \hat U_1,\ldots,\hat U_s\rangle$, where $\hat U_j=\exp(t\hat \partial_j)$ with $\hat \partial_j\in \hat L$. 
So, $\hat U_j\subset\exp(\hat L)$ for any $j=1,\ldots,s$. 
It follows that $\hat G\subset \exp(\hat L)$. In fact, $\hat G=\exp(\hat L)$ because the Lie subalgebras ${\rm Lie}(\hat U_j)={\bf k}\hat \partial_j\subset \hat L$, $j=1,\ldots,s$, 
generate $\hat L$. 
By the preceding discussion we deduce ${\rm Lie}(\hat G)=\hat L$. It follows that ${\rm Lie}(G)=L$.
\end{proof}

\medskip

\noindent {\bf Proof of Proposition~\ref{thm:2}.} 
Due to Corollary \ref{cor:3cases}, under the assumption of Proposition~\ref{thm:2}, 
\eqref{eq:assumption} holds for any $e\in R_i,\,e'\in R_j$. 
Then $L$ has no $2$-cycle of root derivations. 
Now the assertion follows from Propositions~\ref{prop:ALS} and~\ref{prop:2-cycle}.
\hfill\qed

\section{Transitive actions}\label{sec:transitive}
\subsection{Doubly transitive groups acting on  toric affine varieties}\label{ss:2-transitive}

In this section we  apply the Tits' type alternative to answer Question \ref{ques:1} 
under the assumption of double transitivity of the group in question. 
We start with the following simple combinatorial lemma.

\begin{lemma}\label{lem:double-transitivity} 
The group which acts effectively and doubly transitively on a set of cardinality at least three has trivial center.
\end{lemma}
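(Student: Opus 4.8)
The plan is to fix an arbitrary element $z$ of the center $Z(G)$ and to show that $z$ acts as the identity on $X$. Since the action is effective, any element acting trivially on $X$ must be the neutral element of $G$; this will give $Z(G)=\{1\}$. The whole argument is elementary group theory, using double transitivity only to produce one convenient group element.

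First I would record the key consequence of centrality: because $zg=gz$ for every $g\in G$, the action satisfies the intertwining relation
\[
z\cdot(g\cdot p)=g\cdot(z\cdot p)\qquad\text{for all }g\in G,\ p\in X.
\]
Then I would argue by contradiction, assuming that $z$ moves some point, say $z\cdot x=y$ with $y\neq x$. Since $|X|\geq 3$ and $x\neq y$, I can pick a third point $w\in X\setminus\{x,y\}$. As $(x,y)$ and $(x,w)$ are both ordered pairs of distinct points, double transitivity supplies $g\in G$ with $g\cdot x=x$ and $g\cdot y=w$. Evaluating the intertwining relation at $p=x$ yields $z\cdot(g\cdot x)=z\cdot x=y$ on one side and $g\cdot(z\cdot x)=g\cdot y=w$ on the other; hence $y=w$, contradicting $w\notin\{x,y\}$. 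Thus $z$ fixes every point of $X$, and effectiveness finishes the proof.

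There is no real obstacle here: the argument is short and combinatorial. The only points needing care are the action-convention bookkeeping that converts centrality into the displayed intertwining relation, and the verification that the two source and target pairs consist of distinct points, so that double transitivity indeed applies. The hypothesis $|X|\geq 3$ enters exactly to guarantee the existence of the third point $w$; without it, a central transposition of a two-element set (as in the effective doubly transitive action of $\mathbb{Z}/2\mathbb{Z}$) would not be excluded.
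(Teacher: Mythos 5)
Your proof is correct and rests on the same mechanism as the paper's: since a central element commutes with the stabilizer $G_x$, which acts transitively on $X\setminus\{x\}$, it must fix $x$. The only cosmetic difference is the endgame — you conclude directly from effectiveness that $z=1$, whereas the paper instead invokes the fact that a nontrivial normal subgroup of a doubly transitive group is transitive and contradicts that; both closings are fine.
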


\begin{proof} 
Let a group $G$ act effectively and doubly transitively on a set $X$, where ${\rm card}(X)\ge 3$. 
Then the following hold.

\smallskip
\begin{enumerate}
\item[(a)]\label{claim:a} Any non-trivial normal subgroup $H$ of $G$  is transitive on $X$;
\item[(b)]\label{claim:b} the stabilizer $G_x$ of a point $x\in X$ 
acts transitively on $X\setminus\{x\}$, and so, $x$ is a unique fixed point of $G_x$. 
\end{enumerate}

\smallskip

\noindent To show (a) it suffices to notice that $G$ permutes the $H$-orbits on $X$. 
Statement (b) is immediate. 

\smallskip

Assume now that the center $Z$ of $G$ is nontrivial. Then by (a), $Z$ is transitive on $X$. 
On the other hand, since $Z$ commutes with $G_x$, it fixes the unique fixed point $x$ of $G_x$, see (b). 
This gives a contradiction. 
\end{proof}

The next statement follows immediately. 

\begin{proposition}\label{lem:unipotent-transitivity} No nilpotent group acts doubly transitively on a set $X$ with ${\rm card}(X)>2$. In particular,
a unipotent linear algebraic group cannot act $2$-transitively on an algebraic variety.
\end{proposition}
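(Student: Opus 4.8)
The plan is to deduce the proposition directly from Lemma~\ref{lem:double-transitivity}, combined with the elementary structural fact that a nontrivial nilpotent group has nontrivial center. The content of Lemma~\ref{lem:double-transitivity} is that an \emph{effective} doubly transitive action on a set of cardinality at least three forces the center of the acting group to be trivial, so the whole argument amounts to arranging effectiveness and then exhibiting a contradiction.

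First I would reduce to an effective action. Suppose, for contradiction, that a nilpotent group $G$ acts doubly transitively on a set $X$ with ${\rm card}(X)>2$. Let $K$ be the kernel of the action and set $\bar G=G/K$. Then $\bar G$ acts effectively and doubly transitively on $X$, and $\bar G$ is again nilpotent, being a quotient of a nilpotent group. Since $\bar G$ acts transitively on a set with more than one element, $\bar G$ is nontrivial.

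Next I would invoke the fact that any nontrivial nilpotent group has nontrivial center: the upper central series of $\bar G$ terminates at the whole group, so its first term $Z(\bar G)$ cannot be trivial, as otherwise the entire series would collapse to $1$. Thus $Z(\bar G)\neq 1$. On the other hand, applying Lemma~\ref{lem:double-transitivity} to the effective doubly transitive action of $\bar G$ on $X$, where ${\rm card}(X)\ge 3$, forces $Z(\bar G)=1$. This contradiction proves the first assertion.

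Finally, for the ``in particular'' clause I would observe that a unipotent linear algebraic group is nilpotent as an abstract group, so the first part applies provided ${\rm card}(X)>2$. The varieties of interest here are positive-dimensional, hence infinite over the algebraically closed field $\bf k$, so ${\rm card}(X)>2$ and the conclusion follows; and the residual small cases are harmless, since in characteristic zero a unipotent group is torsion-free (indeed divisible) and therefore cannot act $2$-transitively on a set of only two elements either. The one step I would treat with care, though it is routine, is the passage to the effective quotient $\bar G=G/K$, which is what lets Lemma~\ref{lem:double-transitivity} be brought to bear; the remainder of the argument is immediate.
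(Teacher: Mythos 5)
Your proof is correct and follows the paper's intended route: the paper derives the proposition ``immediately'' from Lemma~\ref{lem:double-transitivity} together with the standard fact that a nontrivial nilpotent group has nontrivial center, which is exactly your argument. Your explicit passage to the effective quotient $\bar G=G/K$ and your handling of the two-point case are details the paper leaves implicit, but they do not change the approach.
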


\medskip

\begin{remark}
Alternatively, the second statement can be deduced from the classification 
of doubly transitive groups of homeomorphisms and doubly transitive Lie groups, 
see~\cite{Kram:2003, Tit:1952, Tit:1956}, or by using \cite[Prop.~17.4 and Cor.~17.5]{Hum:1975}. 
\end{remark}

Now we can deduce Corollary~\ref{thm:0.3} from the Introduction.

\smallskip

\noindent {\bf Proof of Corollary~\ref{thm:0.3}.} 
The assertion follows immediately from Theorem~\ref{thm:0.4} and Proposition~\ref{lem:unipotent-transitivity}.
\qed


 \subsection{High transitivity of a subnormal subgroup}\label{ss:high-transitive}
 As we indicated in the Introduction, the highly transitive actions of the groups generated 
 by one-parameter unipotent subgroups were a starting point of this research. 
 We add below some results concerning a combinatorial aspect of high transitivity, 
 which could be useful in order to attack Conjecture~\ref{conj:general}.

 \begin{definition} Let $G$ be a group. We say that $G$ is \emph{highly transitive} if $G$ 
 admits an effective action on a set $X$ which is $m$-transitive for any $m\in {\mathbb N}$. 
 \end{definition}

\emph{Attention:}  one can find in the literature another definition of high transitivity, 
which does not require effectiveness. 

The non-abelian free groups provide examples of highly transitive groups \cite{Cam:1987, MD:1977}. 
 Recall that a subgroup
$N$ of a group $G$ is called \emph{subnormal} if there exists a descending normal series 
\begin{equation}\label{eq:normal-series} G \unrhd N_1 \unrhd N_2 \unrhd \ldots \unrhd N_k=N\,.
\end{equation}

\begin{proposition}\label{lem:subnormal} Assume that a group $G$ acts effectively 
and highly transitively on an infinite set $X$. Then any nontrivial subnormal subgroup $N$ 
of $G$ is also highly transitive on $X$. In particular, $N$ cannot be virtually solvable.
\end{proposition}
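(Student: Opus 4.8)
The plan is to reduce the statement to the case of a \emph{normal} subgroup, and then to prove by induction on the degree of transitivity that a nontrivial normal subgroup of a highly transitive group is again highly transitive. Given the subnormal series $G\unrhd N_1\unrhd\cdots\unrhd N_k=N$, every term contains $N=N_k$ and is therefore nontrivial, and each inclusion is normal. Hence, once one knows that a nontrivial normal subgroup of an effectively and highly transitively acting group is itself effective and highly transitive on the same set, one applies this successively to the pairs $(G,N_1),(N_1,N_2),\ldots,(N_{k-1},N_k)$, using that a restriction of an effective action stays effective and that $X$ stays infinite. Thus everything comes down to the key assertion: if $\Gamma$ acts effectively and highly transitively on an infinite set $Y$ and $M\neq 1$ is normal in $\Gamma$, then $M$ is highly transitive on $Y$.

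For the key assertion I would show by induction on $m$ that $M$ is $m$-transitive, for every effective, highly transitive action of a group $\Gamma$ on an infinite set and every nontrivial normal $M\unlhd\Gamma$. Transitivity ($m=1$) is already Lemma~\ref{lem:double-transitivity}(a): fixing $1\neq h\in M$ with $h(a)=b\neq a$ and arbitrary distinct $p,q\in Y$, one picks $g\in\Gamma$ with $g(a)=p$, $g(b)=q$ and observes that $ghg^{-1}\in M$ sends $p$ to $q$. For the inductive step one fixes $y\in Y$ and passes to the stabilizer: $\Gamma_y$ acts effectively and highly transitively on the infinite set $Y\setminus\{y\}$, and $M_y=M\cap\Gamma_y$ is normal in $\Gamma_y$. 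Provided $M_y\neq 1$, the inductive hypothesis gives that $M_y$ is $(m-1)$-transitive on $Y\setminus\{y\}$; since $M$ is transitive, this yields $m$-transitivity of $M$.

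The main obstacle is precisely to verify that $M_y\neq 1$, i.e. that the transitive normal subgroup $M$ is not regular, and here high transitivity is used essentially. If $M$ were regular, one identifies $Y$ with $M$ so that $\Gamma_y$ acts on $Y\setminus\{y\}=M\setminus\{1\}$ by conjugation, hence through automorphisms of $M$; high transitivity then produces a subgroup of $\mathrm{Aut}(M)$ acting $k$-transitively on $M\setminus\{1\}$ for every $k$. Automorphisms preserve the relation $uv=1$: were $M$ not of exponent $2$, one could take a pair $(u,u^{-1})$ with $u\neq u^{-1}$ and, since $M\setminus\{1\}$ is infinite, a pair $(u',v')$ with $v'\neq u'^{-1}$, and $2$-transitivity would map the former to the latter, a contradiction. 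Hence $M$ has exponent $2$, i.e. is an infinite-dimensional $\mathbb{F}_2$-vector space $V$ with $\mathrm{Aut}(M)=GL(V)$; but linear maps preserve linear dependence, so the dependent triple $(u,v,u+v)$ cannot be sent to a linearly independent triple, contradicting the $3$-transitivity of the image on $V\setminus\{0\}$. Thus $M$ is not regular, $M_y\neq 1$, and the induction closes.

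Finally, for the last assertion I would apply the statement just proved to $N$ itself, which is now known to be highly transitive on the infinite set $X$ and hence infinite. If $N$ were virtually solvable, the normal core $S$ of a solvable finite-index subgroup is a nontrivial (as $N$ is infinite) solvable normal subgroup of $N$, and its derived series exhibits a nontrivial abelian subgroup $A=S^{(d-1)}$ that is subnormal in $N$. By the proposition, $A$ would then be highly transitive, in particular $2$-transitive, on $X$. But an effective transitive abelian action is regular, and a regular action on a set of more than two points is never $2$-transitive, a contradiction. Therefore $N$ cannot be virtually solvable.
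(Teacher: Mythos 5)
Your proposal is correct and follows essentially the same route as the paper: reduce to the normal case and iterate along the subnormal series, then rule out a regular normal subgroup via the conjugation action on $M\setminus\{1\}$, the preservation of inverse pairs, and the exponent-two/$\mathbb{F}_2$-vector-space analysis (the paper organizes this as a minimal-counterexample argument on the tuple length rather than your induction on the transitivity degree, but the content is identical). Your concluding argument for the virtually solvable case likewise matches the paper's Corollary~\ref{cor:virt-solv}.
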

 
 The proof is done at the end of the section; it is based on the following lemma.  
In turn, the proof of the lemma imitates the one of \cite[Cor.~7.2A]{DM:1996}. 
For the sake of completeness, we provide the argument. 

\begin{lemma} \label{th-1}
Assume that a group $G$ acts effectively and highly transitively on a set $X$. 
Let $H$ be a non-trivial normal subgroup of $G$. Then $H$ acts on $X$
highly transitively.
\end{lemma}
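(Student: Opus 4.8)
The plan is to prove, by induction on $m$, the stronger assertion $P(m)$: \emph{for every group $G$ acting effectively and highly transitively on an infinite set $X$, and every nontrivial normal subgroup $H$ of $G$, the subgroup $H$ is $m$-transitive on $X$.} Granting $P(m)$ for all $m\ge 1$ gives the lemma. Note first that high transitivity forces $X$ to be infinite, since an effective action on a finite set of cardinality $n$ cannot be $m$-transitive for $m>n$; so there is no finiteness obstruction to $m$-transitivity for large $m$.

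For the base case $m=1$ I would argue that $G$, being $2$-transitive, is primitive, so the only $G$-invariant partitions of $X$ are the two trivial ones. Since $H$ is normal in $G$ and $G$ is transitive, the partition of $X$ into $H$-orbits is $G$-invariant, hence trivial. It cannot consist of singletons, for then $H$ would act trivially and effectiveness would force $H=1$; therefore $H$ is transitive. For the inductive step, fix $x\in X$ and pass to the stabilizer $G_x$ acting on $X'=X\setminus\{x\}$. This action is again effective (an element of $G_x$ acting trivially on $X'$ fixes all of $X$) and highly transitive (high transitivity of $G$ yields $(m-1)$-transitivity of $G_x$ on $X'$ at every level). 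The intersection $H_x=H\cap G_x$ is normal in $G_x$. Assuming $H_x\ne 1$, the inductive hypothesis $P(m-1)$ applied to the triple $(G_x, X', H_x)$ shows that $H_x$ is $(m-1)$-transitive on $X'$; combined with the transitivity of $H$ (the case $m=1$) and the standard criterion ``$m$-transitive $\Leftrightarrow$ transitive and one-point stabilizer $(m-1)$-transitive'', this yields that $H$ is $m$-transitive, completing the step.

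The one point that requires a genuine use of high transitivity, and which I expect to be the main obstacle, is the verification that $H_x\ne 1$, i.e.\ that the transitive subgroup $H$ is not regular. Here one cannot invoke $2$- or $3$-transitivity alone: the groups $\mathrm{AGL}(d,2)$ are $3$-transitive and possess a regular normal subgroup. Suppose $H_x=1$. Identifying $X$ with $H$ via $h\mapsto h\cdot x$, the point $x$ corresponds to $1$, and the conjugation action turns the $G_x$-action on $X'\cong H\setminus\{1\}$ into an action by automorphisms of $H$. Since $G_x$ is highly transitive on $X'$, it is in particular $3$-transitive there; but an automorphism of $H$ preserves the ternary relation $\{(a,b,ab)\}$, and because $H$ is infinite one can choose distinct $a,b\in H\setminus\{1\}$ with $c:=ab\notin\{1,a,b\}$ and a further element $c'\notin\{1,a,b,c\}$. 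Three-transitivity supplies $g\in G_x$ fixing $a$ and $b$ and sending $c$ to $c'$, whereas the induced automorphism sends $c=ab$ to $ab=c\ne c'$, a contradiction. Hence $H$ has nontrivial point stabilizers and the induction goes through.
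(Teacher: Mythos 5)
Your proof is correct, and its overall skeleton matches the paper's: both arguments reduce the statement to showing that the point stabilizers of the normal subgroup are nontrivial (equivalently, that the relevant normal subgroup is not regular), and both handle the regular case by identifying the set with the group $H$ (resp.\ $H_\beta$) and observing that the stabilizer of the base point then acts by conjugation, i.e.\ by automorphisms. Where you genuinely diverge is in how the contradiction with multiple transitivity is extracted from this automorphism action. The paper notes that conjugation preserves the pairing $h\leftrightarrow h^{-1}$, which kills $2$-transitivity unless $H_\beta$ has exponent two, and must then dispose of the exponent-two case separately by viewing $H_\beta$ as an $\mathbb{F}_2$-vector space and using that $\mathrm{GL}(V)$ preserves linear dependence, hence is not $3$-transitive on $V\setminus\{0\}$. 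You instead use that automorphisms preserve the ternary relation $\{(a,b,ab)\}$ and contradict $3$-transitivity directly; this handles all groups uniformly, with no case split, and is arguably the cleaner obstruction. (It is also the right instinct to flag, as you do, that $2$- or $3$-transitivity alone cannot suffice because of $\mathrm{AGL}(d,2)$ — the paper's exponent-two detour exists precisely because of these examples.) Your organization as an induction on $m$ over all triples $(G,X,H)$, versus the paper's minimal-counterexample argument over all tuples $\alpha$ for a fixed $G$, is an equivalent repackaging of the same stabilizer reduction. Your explicit hypothesis that $X$ is infinite is consistent with the paper, whose proof also uses it (``Since $H_\beta$ is infinite\dots'') and whose application in Proposition~5.5 assumes it.
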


\begin{proof} 
For any $m$-tuple $\alpha=\{x_1,\ldots,x_m\}$ of pairwise distinct points in $X$ we consider 
the stabilizers 
$$
G_{\alpha}=G_{x_1}\cap\ldots\cap G_{x_m} \quad
\text{and} \quad  H_{\alpha}=H_{x_1}\cap\ldots\cap H_{x_m}.
$$
Then $H_{\alpha}$ is a normal subgroup in $G_{\alpha}$. 

We have to show that for any positive integer $m$ and for any $m$-tuple $\alpha$ the group $H_{\alpha}$ acts transitively on 
$X\setminus\{x_1,\ldots,x_m\}$. 

By assumption, $G_\alpha$ acts  highly transitively on $X\setminus\{x_1,\ldots,x_m\}$. 
By Claim~(a) from the proof of Lemma~\ref{lem:double-transitivity},
either $H_\alpha$ is transitive on $X\setminus\{x_1,\ldots,x_m\}$, or $H_\alpha=\{e\}$.

Assuming the latter,  take the minimal $m$ with this property, where $m\ge 1$ 
by Claim~(a) from the proof of Lemma~\ref{lem:double-transitivity}. 
Let $\beta=\{x_1,\ldots,x_{m-1}\}$. By assumption,
the stabilizer $H_{\beta}$  is transitive on $X\setminus\{x_1,\ldots,x_{m-1}\}$ (for $m=1$ 
we have $H_{\beta}=H$, and the latter follows by Claim~(a) from the proof of 
Lemma~\ref{lem:double-transitivity}).  Moreover, $H_{\beta}$ is simply transitive  on 
$X\setminus\{x_1,\ldots,x_{m-1}\}$, and so, we can identify the set
 $X\setminus\{x_1,\ldots,x_m\}$  with $H_{\beta}\setminus\{e\}$ via the bijection
 $$
X\setminus\{x_1,\ldots,x_m\} \ni  y \ \mapsto \ h\in H_{\beta}\setminus\{e\}, \quad \text{where} \quad y=hx_m. 
 $$
Under this identification, the action of $G_\alpha$ on $X\setminus\{x_1,\ldots,x_m\}$ 
goes to the action of $G_\alpha$ on $H_{\beta}\setminus\{e\}$
by conjugation, due to the relation
$$
ghx_m=ghg^{-1}gx_m=ghg^{-1}x_m \quad \forall g\in G_\alpha, \quad \forall h\in H_{\beta}\setminus\{e\}. 
$$
The action by conjugation sends a pair $(h,h^{-1})$ to a pair of the same type.  
Since $H_{\beta}$ is infinite, it follows that the action of $G_x\subset \mathop{\rm Aut} (N)$ 
on $H_{\beta}\setminus\{e\}$ cannot be $2$-transitive, unless $H_{\beta}$ is a group of exponent two. 
  
Suppose finally that $H_{\beta}$ is a group of exponent two. It is well known 
that $H_{\beta}$ is a power of ${\mathbb Z}/2{\mathbb Z}$, or, in other words, 
the additive group of a vector space $V$ over the field $\mathbb{F}_2$ 
with two elements. However, the action of $\mathop{\rm Aut} (H_{\beta})={\rm GL}(V)$ 
is not $3$-transitive on $H_{\beta}\setminus\{e\}=V\setminus\{0\}$ contrary to our assumption, 
because it preserves the linear (in)dependence. This contradiction completes the proof.
 \end{proof}

\begin{remark}\label{rems} 
 Notice that the affine group $G={\rm Aff}\, (V)$ of the vector space $V=\mathbb{A}^n_{\mathbb{F}_2}$, $n\ge 3$, 
 acts $3$-transitively on $V$, while the normal subgroup of translations acts just simply transitively on $V$, 
 contrary to \cite[Exercise 2.1.16]{DM:1996}.
 \end{remark}

\begin{corollary}\label{cor:virt-solv}
A virtually solvable group cannot be highly transitive.
\end{corollary}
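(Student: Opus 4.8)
The plan is to argue by contradiction, reducing a hypothetical virtually solvable highly transitive group to a nontrivial abelian normal subgroup and then invoking the obstructions of subsection~\ref{ss:2-transitive}. So suppose $G$ is virtually solvable and admits an effective action on a set $X$ that is $m$-transitive for every $m\in\mathbb{N}$. First I would record two preliminary observations. The set $X$ must be infinite, since a finite set admits no $m$-transitive action once $m$ exceeds its cardinality; and then $G$ itself is infinite, as the effective action embeds $G$ into $\mathop{\rm Sym}(X)$ with $m$-transitive image for all $m$, which is impossible for a finite permutation group on an infinite set.

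Next I would extract the abelian normal subgroup. Let $H\le G$ be a solvable subgroup of finite index, and let $N=\bigcap_{g\in G}gHg^{-1}$ be its normal core. Then $N$ is normal in $G$, has finite index, and is solvable as a subgroup of $H$. Since $G$ is infinite while $[G:N]<\infty$, the subgroup $N$ is nontrivial. Now consider the derived series $N\unrhd N^{(1)}\unrhd\ldots\unrhd N^{(d)}=\{1\}$ with $d\ge 1$, and set $A:=N^{(d-1)}\neq\{1\}$, which is abelian. Each $N^{(i)}$ is characteristic in $N$, and a characteristic subgroup of a normal subgroup is normal in the ambient group (conjugation by any $g\in G$ restricts to an automorphism of $N$, which preserves every characteristic subgroup); hence $A$ is a nontrivial abelian normal subgroup of $G$.

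The contradiction then follows from the transitivity constraints. By Lemma~\ref{th-1}, the nontrivial normal subgroup $A$ of the highly transitive group $G$ is itself highly transitive on $X$; in particular it acts effectively and doubly transitively, with ${\rm card}(X)\ge 3$. But $A$ is abelian, hence nilpotent, and Proposition~\ref{lem:unipotent-transitivity} prohibits a nilpotent group from acting doubly transitively on a set of cardinality greater than two. (Equivalently, Lemma~\ref{lem:double-transitivity} would force $Z(A)=\{1\}$, whereas $Z(A)=A\neq\{1\}$.) This contradiction shows that $G$ can be neither virtually solvable nor highly transitive simultaneously.

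I expect the substance of this argument to be conceptual bookkeeping rather than a hard computation: the delicate points are ensuring that the extracted abelian subgroup is genuinely \emph{normal} in $G$ (handled by passing to the normal core of a finite-index solvable subgroup and using that characteristic subgroups of a normal subgroup stay normal in the whole group) and that it is \emph{nontrivial} (handled by the infiniteness of $G$). The one genuine care I would take is to route the final contradiction through Lemma~\ref{th-1} together with Proposition~\ref{lem:unipotent-transitivity}, and not through the ``in particular'' clause of Proposition~\ref{lem:subnormal}, since that clause is itself a consequence of the present corollary and using it would make the reasoning circular.
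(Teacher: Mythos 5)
Your proof is correct and follows essentially the same strategy as the paper's: descend from the virtually solvable group to a nontrivial abelian (sub)normal subgroup and contradict high transitivity via Lemma~\ref{th-1} together with the double-transitivity obstruction. The only differences are that you arrange (via the normal core and the derived series) for the abelian subgroup to be genuinely normal in $G$, so a single application of Lemma~\ref{th-1} suffices where the paper implicitly iterates it along the subnormal chain $G\unrhd H\unrhd A\unrhd C$ down to a cyclic subgroup, and you make explicit two points the paper leaves tacit: the infiniteness of $X$ and $G$ (needed for nontriviality of the extracted subgroups) and the fact that the contradiction must be routed through Lemma~\ref{lem:double-transitivity}/Proposition~\ref{lem:unipotent-transitivity} rather than the second assertion of Proposition~\ref{lem:subnormal}, which would be circular.
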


\begin{proof}
Any virtually solvable group $G$ contains a normal solvable subgroup $H$ of finite index.
In turn, $H$ contains a nontrivial normal abelian subgroup $A$, and $A$ contains a nontrivial
cyclic subgroup $C$. Clearly, a cyclic subgroup cannot be highly transitive. 
\end{proof} 

\begin{remark} By Gromov's Theorem~\cite{Gro:1981}, a finitely generated group has polynomial growth if and only if it is virtually nilpotent. 
Hence, the conclusion of Corollary~\ref{cor:virt-solv} holds for any finitely generated group of polynomial growth. 
 \end{remark}

\noindent {\bf Proof of Proposition~\ref{lem:subnormal}.}
 The first assertion of \ref{lem:subnormal} follows from Lemma~\ref{th-1} 
by recursion on the length of the normal series~\eqref{eq:normal-series}, 
and the second follows from Corollary~\ref{cor:virt-solv}.
\qed
  
\bigskip

\noindent{\bf Funding}

\medskip

\noindent The work of the first author was supported by the grant RSF-19-11-00172.

\bigskip

\noindent {\bf  Acknowledgements}

\medskip

\noindent The second author is grateful to Fran\c cois Dahmani, 
Rostislav Grigorchuk, Alexei Kanel-Belov, Boris Kunyavski, 
St\'ephane Lamy, Bertrand Remy, and  Christian Urech for useful conversations, letter exchange, and references. 
Our thanks are due as well to a referee for fruitful remarks and suggestions, which allowed to improve essentially 
the presentation and to strengthen one of the main results.
 

\end{document}